\newtheorem{theorem}{Theorem}[section]
\newtheorem{lemma}[theorem]{Lemma}
\newtheorem{problem}[theorem]{Problem}
\newtheorem{conj}[theorem]{Conjecture}
\newtheorem{claim}[theorem]{Claim}
\theoremstyle{definition}
\newtheorem{defn}[theorem]{Definition}
\newtheorem*{defn-non}{Definition}
\newcounter{propcounter}
\newenvironment{poc}{\begin{proof}[Proof of claim]
}{\end{proof}}
\DeclareMathOperator*{\E}{{\mathbb E}}
\newcommand{\C}[1]{{\protect\mathcal{#1}}}
\newcommand{\del}{\delta}
\newcommand{\eps}{\varepsilon}
\newcommand{\al}{\alpha}
\newcommand{\ex}{\mathrm{ex}}
\newcommand{\BE}{\mathrm{BE}}
\newcommand{\sph}{\textsf{S}}
\title{Generalized Ramsey--Tur\'an density for cliques}
\author{Jun Gao \thanks{Extremal Combinatorics and Probability Group (ECOPRO), Institute for Basic Science (IBS), Daejeon, South Korea.
Supported by IBS-R029-C4.
 Emails: jungao@ibs.re.kr, hongliu@ibs.re.kr. }
\and Suyun Jiang\thanks{School of Artificial Intelligence, Jianghan University, Wuhan, Hubei, China, and Extremal Combinatorics and Probability Group (ECOPRO), Institute for Basic Science (IBS), Daejeon, South Korea. Supported by National Natural Science Foundation of China (11901246) and China Scholarship Council and IBS-R029-C4.
Email: jiang.suyun@163.com. }
\and
Hong Liu \footnotemark[1]
\and Maya Sankar \thanks{Department of Mathematics, Stanford University, Stanford, CA 94305, USA. Email: mayars@stanford.edu. Research supported by NSF GRFP Grant DGE-1656518 and a Hertz Fellowship.}
}
\begin{document}

\maketitle
\begin{abstract}
We study the generalized Ramsey--Tur\'an function $\mathrm{RT}(n,K_s,K_t,o(n))$, which is the maximum possible number of copies of $K_s$ in an $n$-vertex $K_t$-free graph with independence number $o(n)$. The case when $s=2$ was settled by Erd{\H{o}}s, S{\'o}s, Bollob{\'a}s, Hajnal, and Szemer\'{e}di in the 1980s. We combinatorially resolve the general case for all $s\ge 3$, showing that the (asymptotic) extremal graphs for this problem have simple (bounded) structures. In particular, it implies that the extremal structures follow a periodic pattern when $t$ is much larger than $s$. Our results disprove a conjecture of Balogh, Liu, and Sharifzadeh and show that a relaxed version does hold.
\end{abstract}

\section{Introduction}

Ramsey theory, initially explored by Ramsey \cite{ramsey1930} in 1930, stands as a pivotal branch of combinatorics. It seeks to tackle a fundamental question: what is the minimum size required to guarantee the existence of a well-defined substructure within a larger, often chaotic, set or system? One of the most renowned results in Ramsey theory is Ramsey's theorem, which asserts that if $n$ is large enough in terms of $k$, then no matter how one colors the edges of a complete graph of order $n$ using two colors, there will always exist a monochromatic complete subgraph $K_k$.

In 1941, Tur\'an \cite{turan1941on} proposed and solved the following problem: what is the maximum number of edges that a graph $G$ of order $n$ can have without containing a complete graph $K_k$? He also proved that the value is attained only by the balanced complete $(k-1)$-partite graph, now known as the \emph{Tur\'an graph} $T_{k-1}(n)$. Subsequently, a new branch of extremal combinatorics named after him emerged: Tur\'an-type problems. Formally, we define the generalized Tur\'an function $\ex(n, H_1, H_2)$ as the maximum possible number of copies of $H_1$ in an $n$-vertex $H_2$-free graph. There has been extensive research on this function. When $H_1\cong K_2$, Erd\H{o}s, Stone, and Simonovits (see \cite{erdos1946linear,erdos1966limit}) gave an asymptotically satisfactory solution for all graphs $H_2$, and Erd\H{o}s \cite{erdos1962number} additionally determined $\ex(n, K_s, K_t)$ for all $t>s\geq 3$. More recently, Alon and Shikhelman \cite{alon2016many} systematically studied $\ex(n, H_1, H_2)$ for other graphs $H_1$, and there have been a number of results in this direction (see e.g.\ \cite{Gishboliner2020Generalized,ma2020some,Morrison2023every,Beke2023odd,gao2023theta}).

In this paper, we study the following extremal quantity which mixes Ramsey theory with Tur\'an-type problems. Define the \emph{generalized Ramsey--Tur\'an number} $\mathrm{RT}(n,H_1,H_2,\ell)$ to be the maximum number of copies of $H_1$ in an $n$-vertex $H_2$-free graph $G$ with independence number $\alpha(G)< \ell$. We remark that the existence of such a graph $G$ is controlled by the Ramsey number $R(H_2,K_{\ell})$, which is defined to be the least $N$ such that every graph $G$ on $N$ vertices contains either a subgraph isomorphic to $H_2$ or an independent set of size $\ell$. This quantity is also inherently related to the generalized Tur\'an function; indeed, we have $\ex(n, H_1, H_2)=\mathrm{RT}(n,H_1,H_2,n+1)$.

This beautiful way of combining Ramsey theory with Tur\'an-type problems was first proposed in the late 1960s by S\'os \cite{sos69}, who investigated $\mathrm{RT}(n,K_2,H,\ell)$. The most studied case is when the independence number is sublinear: $\ell=o(n)$. To eliminate minor fluctuations caused by small values of $n$, one usually considers the asypmtotic behavior via the \emph{Ramsey--Tur\'an density} function,
\[
\varrho_{s}(K_t)=\lim_{\delta \to  0}\ \lim_{n \to \infty}\frac{\mathrm{RT}(n,K_s,K_t,\delta n)}{{n\choose s}}.
\]
It is not hard to see that the above limits exist. Then define $\mathrm{RT}(n,K_s,K_t,o(n))=\varrho_s(K_t){n\choose s}+o(n^s)$. We say an $n$-vertex $K_t$-free graph $G$ with $\alpha(G)=o(n)$ is an (asymptotic) extremal graph if its $K_s$-density attains $\varrho_s(K_t)$.

When $s=2$, the Ramsey--Tur\'an density has now been completely determined. It was, however, a bumpy road. In 1969, Erd{\H{o}}s and S{\'o}s \cite{erdos1969some} showed that $\varrho_2(K_{2k+1})=\frac{k-1}{k}$. The even cliques case became significantly more challenging. As a first application of the celebrated regularity lemma, Szemer{\'e}di \cite{szemeredi1972ongraphs} in 1972 proved that $\varrho_2(K_4)\le \frac{1}{4}$, and in 1976 Bollob{\'a}s and Erd{\H{o}}s \cite{Bollobas1976ramseyturan} obtained a matching lower bound $\varrho_2(K_4)\ge \frac{1}{4}$ via an astonishing geometric construction (now called the Bollob\'{a}s--Erd\H{o}s graph). Eventually, in 1983, Erd{\H{o}}s, Hajnal, S{\'o}s and Szemer\'{e}di \cite{erdo1983more} completed the picture, showing that $\varrho_2(K_{2k})=\frac{3k-5}{3k-2}$ for all $k\geq 2$. In fact, they proved a much stronger result showing that extremal graphs for $\varrho_2(K_t)$ exhibit the following periodic behavior:
\begin{itemize}
	\item[$(\star)$] Let $t=2p+r\ge 4$, where $r\in\{0,1\}$. There is an extremal graph $G$ for $\varrho_2(K_t)$ whose vertex set can be partitioned into $V_1\cup \ldots \cup V_p$ satisfying
	(i) $G[V_1,V_2]$ has edge density $\frac{r+1}{2}-o(1)$; (ii) every other $G[V_i,V_j]$ has edge density $1-o(1)$; and (iii) each $G[V_i]$ has edge density $o(1)$.
\end{itemize}
In other words, the extremal structure depends on the parity $r$ of $t$ and evolves as follows: the density of $G[V_1,V_2]$ increases as the parity $r$ increases; and whenever $t$ increases by $2$, a new part is added and joined almost completely to all previous parts (depicted in the first row of Table~\ref{tab:my_label}). For more recent developments of the $s=2$ case and related variations, we refer the interested reader to~\cite{balogh2012some,fox2015cw,balogh2015phase,Balogh2016triangle,balogh2017on,Jaehoon2019,luders2019the,liumeng2021,Liu2021geometric,Knierim2021factor,luczak2022on,hulin2023,han_hu_wang_yang_2023,Balogh2023weighted,chang2023embedding,han2024ramsey,BALOGH2024}.

Balogh, Liu, and Sharifzadeh \cite{balogh2017on} recently initiated the study of the general case $s\ge 3$, which turns out to be much more difficult and delicate than the $s=2$ case. Note that $\varrho_{s}(K_{s+1})=0$: in any $n$-vertex $K_{s+1}$-free graph with independence number $o(n)$, each copy of $K_{s-1}$ lies in $o(n)$ copies of $K_s$. Balogh, Liu, and Sharifzadeh \cite{balogh2017on} determined the first non-trivial cases $\varrho_{3}(K_{t})$ and $\varrho_{s}(K_{s+2})$, and made a conjecture predicting the general case. We find it more convenient to work with the following definition, which helps reformulate their conjecture.

\begin{defn}\label{def:type}
Given integers $b\geq a\ge 1$, a graph $G$ \emph{admits a $(b,a)$-partition} if its vertex set has a partition $V=V_1\cup\ldots\cup V_a$ satisfies the following for $b_1,\ldots,b_a\in\{\lceil\frac{b}{a}\rceil,\lfloor\frac{b}{a}\rfloor\}$ with $\sum_{i=1}^a b_i = b$:
\begin{enumerate}
    \item[(1)] For every distinct $i,j\in[a]$, $G[V_i,V_j]$ has edge density $1-o(1)$; and
    \item[(2)] For every $i\in[a]$, $V_i$ admits an equipartition $V_i^1\cup\ldots\cup V_i^{b_i}$ such that $G[V_i^j]$ has density $o(1)$ for all $j\in[b_i]$ and $G[V_i^j,V_i^{k}]$ has density $\frac{1}{2}-o(1)$ for all distinct $j,k\in[b_i]$.
\end{enumerate}
\end{defn}

For instance, if $a=b=p$ then each $b_i=1$, so an $n$-vertex graph admits a $(p,p)$-partition if and only if it has edit-distance $o(n^2)$ to the Tur\'an graph $T_p(n)$.

\begin{table}[]
    \centering
    \begin{tabular}{c|c|c|c|c|c|c|c|c}
        \toprule
        \diagbox{$s$}{$t$} & 4 & 5& 6& 7 & 8 & 9& 10 & 11 \\ \hline
        2& 
\begin{minipage}{0.085\textwidth}
\centering    
\begin{tikzpicture}[scale=0.45]
\draw[red] [line width=1.5pt] (4.5,10)-- (5.5,10);
\draw [fill=white, line width=1pt] (4.5,10) circle (0.28cm);
\draw [fill=white, line width=1pt] (5.5,10) circle (0.28cm);
\end{tikzpicture}
\end{minipage}  &
\begin{minipage}{0.085\textwidth}
\centering    
\begin{tikzpicture}[scale=0.45]
\draw [line width=1pt] (4.5,10)-- (5.5,10);
\draw [fill=white, line width=1pt] (4.5,10) circle (0.28cm);
\draw [fill=white, line width=1pt] (5.5,10) circle (0.28cm);
\end{tikzpicture}
\end{minipage} & 
\begin{minipage}{0.085\textwidth}
\centering 
\begin{tikzpicture}[scale=0.45]
\draw [line width=1pt] (1,14.5)-- (0.5,13.5);
\draw [line width=1pt] (1,14.5)-- (1.5,13.5);
\draw[red] [line width=1.5pt] (0.5,13.5)-- (1.5,13.5);
\draw [fill=white, line width=1pt] (1,14.5) circle (0.28cm);
\draw [fill=white, line width=1pt] (0.5,13.5) circle (0.28cm);
\draw [fill=white, line width=1pt] (1.5,13.5) circle (0.28cm);
\end{tikzpicture}
\end{minipage} & 
\begin{minipage}{0.085\textwidth}
\centering 
\begin{tikzpicture}[scale=0.45]
\draw [line width=1pt] (1,14.5)-- (0.5,13.5);
\draw [line width=1pt] (1,14.5)-- (1.5,13.5);
\draw [line width=1pt] (0.5,13.5)-- (1.5,13.5);
\draw [fill=white, line width=1pt] (1,14.5) circle (0.28cm);
\draw [fill=white, line width=1pt] (0.5,13.5) circle (0.28cm);
\draw [fill=white, line width=1pt] (1.5,13.5) circle (0.28cm);
\end{tikzpicture}
\end{minipage} & 
\begin{minipage}{0.085\textwidth}
\centering 
\begin{tikzpicture}[scale=0.45]
\draw [line width=1pt] (4.5,10)-- (5.5,10);
\draw[red] [line width=1.5pt] (4.5,9)-- (5.5,9);
\draw [line width=1pt] (4.5,10)-- (4.5,9);
\draw [line width=1pt] (5.5,10)-- (5.5,9);
\draw [line width=1pt] (4.5,10)-- (5.5,9);
\draw [line width=1pt] (4.5,9)-- (5.5,10);
\draw [fill=white, line width=1pt] (4.5,10) circle (0.28cm);
\draw [fill=white, line width=1pt] (5.5,10) circle (0.28cm);
\draw [fill=white, line width=1pt] (4.5,9) circle (0.28cm);
\draw [fill=white, line width=1pt] (5.5,9) circle (0.28cm);
\end{tikzpicture}
\end{minipage} & 
\begin{minipage}{0.085\textwidth}
\centering 
\begin{tikzpicture}[scale=0.45]
\draw [line width=1pt] (4.5,10)-- (5.5,10);
\draw [line width=1pt] (4.5,9)-- (5.5,9);
\draw [line width=1pt] (4.5,10)-- (4.5,9);
\draw [line width=1pt] (5.5,10)-- (5.5,9);
\draw [line width=1pt] (4.5,10)-- (5.5,9);
\draw [line width=1pt] (4.5,9)-- (5.5,10);
\draw [fill=white, line width=1pt] (4.5,10) circle (0.28cm);
\draw [fill=white, line width=1pt] (5.5,10) circle (0.28cm);
\draw [fill=white, line width=1pt] (4.5,9) circle (0.28cm);
\draw [fill=white, line width=1pt] (5.5,9) circle (0.28cm);
\end{tikzpicture}
\end{minipage}& 
\begin{minipage}{0.085\textwidth}
\centering 
\vspace{3pt}
\begin{tikzpicture}[scale=0.45]
\draw [line width=1pt] (5,11)-- (4.2,10);
\draw [line width=1pt] (5,11)-- (5.8,10);
\draw [line width=1pt] (5,11)-- (4.5,9);
\draw [line width=1pt] (5,11)-- (5.5,9);
\draw [line width=1pt] (4.2,10)-- (5.8,10);
\draw [line width=1pt] (4.2,10)-- (4.5,9);
\draw [line width=1pt] (4.2,10)-- (5.5,9);
\draw [line width=1pt] (5.8,10)-- (4.5,9);
\draw [line width=1pt] (5.8,10)-- (5.5,9);
\draw[red] [line width=1.5pt] (4.5,9)-- (5.5,9);
\draw [fill=white, line width=1pt] (4.2,10) circle (0.28cm);
\draw [fill=white, line width=1pt] (5.8,10) circle (0.28cm);
\draw [fill=white, line width=1pt] (4.5,9) circle (0.28cm);
\draw [fill=white, line width=1pt] (5.5,9) circle (0.28cm);
\draw [fill=white, line width=1pt] (5,11) circle (0.28cm);
\end{tikzpicture}
\vspace{3pt}
\end{minipage} & 
\begin{minipage}{0.085\textwidth}
\centering 
\begin{tikzpicture}[scale=0.45]
\draw [line width=1pt] (5,11)-- (4.2,10);
\draw [line width=1pt] (5,11)-- (5.8,10);
\draw [line width=1pt] (5,11)-- (4.5,9);
\draw [line width=1pt] (5,11)-- (5.5,9);
\draw [line width=1pt] (4.2,10)-- (5.8,10);
\draw [line width=1pt] (4.2,10)-- (4.5,9);
\draw [line width=1pt] (4.2,10)-- (5.5,9);
\draw [line width=1pt] (5.8,10)-- (4.5,9);
\draw [line width=1pt] (5.8,10)-- (5.5,9);
\draw [line width=1pt] (4.5,9)-- (5.5,9);
\draw [fill=white, line width=1pt] (4.2,10) circle (0.28cm);
\draw [fill=white, line width=1pt] (5.8,10) circle (0.28cm);
\draw [fill=white, line width=1pt] (4.5,9) circle (0.28cm);
\draw [fill=white, line width=1pt] (5.5,9) circle (0.28cm);
\draw [fill=white, line width=1pt] (5,11) circle (0.28cm);
\end{tikzpicture}
\end{minipage} \\ \hline
3& $\emptyset$ & 
\begin{minipage}{0.085\textwidth}
\centering 
\begin{tikzpicture}[scale=0.45]
\draw[red] [line width=1.5pt] (1,14.5)-- (0.5,13.5);
\draw[red] [line width=1.5pt] (1,14.5)-- (1.5,13.5);
\draw[red] [line width=1.5pt] (0.5,13.5)-- (1.5,13.5);
\draw [fill=white, line width=1pt] (1,14.5) circle (0.28cm);
\draw [fill=white, line width=1pt] (0.5,13.5) circle (0.28cm);
\draw [fill=white, line width=1pt] (1.5,13.5) circle (0.28cm);
\end{tikzpicture}
\end{minipage}& 
\begin{minipage}{0.085\textwidth}
\centering 
\begin{tikzpicture}[scale=0.45]
\draw [line width=1pt] (1,14.5)-- (0.5,13.5);
\draw [line width=1pt] (1,14.5)-- (1.5,13.5);
\draw[red] [line width=1.5pt] (0.5,13.5)-- (1.5,13.5);
\draw [fill=white, line width=1pt] (1,14.5) circle (0.28cm);
\draw [fill=white, line width=1pt] (0.5,13.5) circle (0.28cm);
\draw [fill=white, line width=1pt] (1.5,13.5) circle (0.28cm);
\end{tikzpicture}
\end{minipage} & 
\begin{minipage}{0.085\textwidth}
\centering 
\begin{tikzpicture}[scale=0.45]
\draw [line width=1pt] (1,14.5)-- (0.5,13.5);
\draw [line width=1pt] (1,14.5)-- (1.5,13.5);
\draw [line width=1pt] (0.5,13.5)-- (1.5,13.5);
\draw [fill=white, line width=1pt] (1,14.5) circle (0.28cm);
\draw [fill=white, line width=1pt] (0.5,13.5) circle (0.28cm);
\draw [fill=white, line width=1pt] (1.5,13.5) circle (0.28cm);
\end{tikzpicture}
\end{minipage} & 
\begin{minipage}{0.085\textwidth}
\centering 
\begin{tikzpicture}[scale=0.45]
\draw [line width=1pt] (4.5,10)-- (5.5,10);
\draw[red] [line width=1.5pt] (4.5,9)-- (5.5,9);
\draw [line width=1pt] (4.5,10)-- (4.5,9);
\draw [line width=1pt] (5.5,10)-- (5.5,9);
\draw [line width=1pt] (4.5,10)-- (5.5,9);
\draw [line width=1pt] (4.5,9)-- (5.5,10);
\draw [fill=white, line width=1pt] (4.5,10) circle (0.28cm);
\draw [fill=white, line width=1pt] (5.5,10) circle (0.28cm);
\draw [fill=white, line width=1pt] (4.5,9) circle (0.28cm);
\draw [fill=white, line width=1pt] (5.5,9) circle (0.28cm);
\end{tikzpicture}
\end{minipage} & 
\begin{minipage}{0.085\textwidth}
\centering 
\begin{tikzpicture}[scale=0.45]
\draw [line width=1pt] (4.5,10)-- (5.5,10);
\draw [line width=1pt] (4.5,9)-- (5.5,9);
\draw [line width=1pt] (4.5,10)-- (4.5,9);
\draw [line width=1pt] (5.5,10)-- (5.5,9);
\draw [line width=1pt] (4.5,10)-- (5.5,9);
\draw [line width=1pt] (4.5,9)-- (5.5,10);
\draw [fill=white, line width=1pt] (4.5,10) circle (0.28cm);
\draw [fill=white, line width=1pt] (5.5,10) circle (0.28cm);
\draw [fill=white, line width=1pt] (4.5,9) circle (0.28cm);
\draw [fill=white, line width=1pt] (5.5,9) circle (0.28cm);
\end{tikzpicture}
\end{minipage}& 
\begin{minipage}{0.085\textwidth}
\centering 
\vspace{3pt}
\begin{tikzpicture}[scale=0.45]
\draw [line width=1pt] (5,11)-- (4.2,10);
\draw [line width=1pt] (5,11)-- (5.8,10);
\draw [line width=1pt] (5,11)-- (4.5,9);
\draw [line width=1pt] (5,11)-- (5.5,9);
\draw [line width=1pt] (4.2,10)-- (5.8,10);
\draw [line width=1pt] (4.2,10)-- (4.5,9);
\draw [line width=1pt] (4.2,10)-- (5.5,9);
\draw [line width=1pt] (5.8,10)-- (4.5,9);
\draw [line width=1pt] (5.8,10)-- (5.5,9);
\draw[red] [line width=1.5pt] (4.5,9)-- (5.5,9);
\draw [fill=white, line width=1pt] (4.2,10) circle (0.28cm);
\draw [fill=white, line width=1pt] (5.8,10) circle (0.28cm);
\draw [fill=white, line width=1pt] (4.5,9) circle (0.28cm);
\draw [fill=white, line width=1pt] (5.5,9) circle (0.28cm);
\draw [fill=white, line width=1pt] (5,11) circle (0.28cm);
\end{tikzpicture}
\vspace{3pt}
\end{minipage} & 
\begin{minipage}{0.085\textwidth}
\centering 
\begin{tikzpicture}[scale=0.45]
\draw [line width=1pt] (5,11)-- (4.2,10);
\draw [line width=1pt] (5,11)-- (5.8,10);
\draw [line width=1pt] (5,11)-- (4.5,9);
\draw [line width=1pt] (5,11)-- (5.5,9);
\draw [line width=1pt] (4.2,10)-- (5.8,10);
\draw [line width=1pt] (4.2,10)-- (4.5,9);
\draw [line width=1pt] (4.2,10)-- (5.5,9);
\draw [line width=1pt] (5.8,10)-- (4.5,9);
\draw [line width=1pt] (5.8,10)-- (5.5,9);
\draw [line width=1pt] (4.5,9)-- (5.5,9);
\draw [fill=white, line width=1pt] (4.2,10) circle (0.28cm);
\draw [fill=white, line width=1pt] (5.8,10) circle (0.28cm);
\draw [fill=white, line width=1pt] (4.5,9) circle (0.28cm);
\draw [fill=white, line width=1pt] (5.5,9) circle (0.28cm);
\draw [fill=white, line width=1pt] (5,11) circle (0.28cm);
\end{tikzpicture}
\end{minipage} \\ \hline
4&  & $\emptyset$ & 
\begin{minipage}{0.085\textwidth}
\centering 
\begin{tikzpicture}[scale=0.45]
\draw[red] [line width=1.5pt] (4.5,10)-- (5.5,10);
\draw[red] [line width=1.5pt] (4.5,9)-- (5.5,9);
\draw[red] [line width=1.5pt] (4.5,10)-- (4.5,9);
\draw[red] [line width=1.5pt] (5.5,10)-- (5.5,9);
\draw[red] [line width=1.5pt] (4.5,10)-- (5.5,9);
\draw[red] [line width=1.5pt] (4.5,9)-- (5.5,10);
\draw [fill=white, line width=1pt] (4.5,10) circle (0.28cm);
\draw [fill=white, line width=1pt] (5.5,10) circle (0.28cm);
\draw [fill=white, line width=1pt] (4.5,9) circle (0.28cm);
\draw [fill=white, line width=1pt] (5.5,9) circle (0.28cm);
\end{tikzpicture}
\end{minipage}& 
\begin{minipage}{0.085\textwidth}
\centering 
\begin{tikzpicture}[scale=0.45]
\draw [line width=1pt] (4.5,10)-- (5.5,10);
\draw [line width=1pt] (4.5,9)-- (5.5,9);
\draw[red] [line width=1.5pt] (4.5,10)-- (4.5,9);
\draw[red] [line width=1.5pt] (5.5,10)-- (5.5,9);
\draw [line width=1pt] (4.5,10)-- (5.5,9);
\draw [line width=1pt] (4.5,9)-- (5.5,10);
\draw [fill=white, line width=1pt] (4.5,10) circle (0.28cm);
\draw [fill=white, line width=1pt] (5.5,10) circle (0.28cm);
\draw [fill=white, line width=1pt] (4.5,9) circle (0.28cm);
\draw [fill=white, line width=1pt] (5.5,9) circle (0.28cm);
\end{tikzpicture}
\end{minipage} & 
\begin{minipage}{0.085\textwidth}
\centering 
\begin{tikzpicture}[scale=0.45]
\draw [line width=1pt] (4.5,10)-- (5.5,10);
\draw[red] [line width=1.5pt] (4.5,9)-- (5.5,9);
\draw [line width=1pt] (4.5,10)-- (4.5,9);
\draw [line width=1pt] (5.5,10)-- (5.5,9);
\draw [line width=1pt] (4.5,10)-- (5.5,9);
\draw [line width=1pt] (4.5,9)-- (5.5,10);
\draw [fill=white, line width=1pt] (4.5,10) circle (0.28cm);
\draw [fill=white, line width=1pt] (5.5,10) circle (0.28cm);
\draw [fill=white, line width=1pt] (4.5,9) circle (0.28cm);
\draw [fill=white, line width=1pt] (5.5,9) circle (0.28cm);
\end{tikzpicture}
\end{minipage} & 
\begin{minipage}{0.085\textwidth}
\centering 
\begin{tikzpicture}[scale=0.45]
\draw [line width=1pt] (4.5,10)-- (5.5,10);
\draw [line width=1pt] (4.5,9)-- (5.5,9);
\draw [line width=1pt] (4.5,10)-- (4.5,9);
\draw [line width=1pt] (5.5,10)-- (5.5,9);
\draw [line width=1pt] (4.5,10)-- (5.5,9);
\draw [line width=1pt] (4.5,9)-- (5.5,10);
\draw [fill=white, line width=1pt] (4.5,10) circle (0.28cm);
\draw [fill=white, line width=1pt] (5.5,10) circle (0.28cm);
\draw [fill=white, line width=1pt] (4.5,9) circle (0.28cm);
\draw [fill=white, line width=1pt] (5.5,9) circle (0.28cm);
\end{tikzpicture}
\end{minipage}& 
\begin{minipage}{0.085\textwidth}
\centering 
\vspace{3pt}
\begin{tikzpicture}[scale=0.45]
\draw [line width=1pt] (5,11)-- (4.2,10);
\draw [line width=1pt] (5,11)-- (5.8,10);
\draw [line width=1pt] (5,11)-- (4.5,9);
\draw [line width=1pt] (5,11)-- (5.5,9);
\draw [line width=1pt] (4.2,10)-- (5.8,10);
\draw [line width=1pt] (4.2,10)-- (4.5,9);
\draw [line width=1pt] (4.2,10)-- (5.5,9);
\draw [line width=1pt] (5.8,10)-- (4.5,9);
\draw [line width=1pt] (5.8,10)-- (5.5,9);
\draw[red] [line width=1.5pt] (4.5,9)-- (5.5,9);
\draw [fill=white, line width=1pt] (4.2,10) circle (0.28cm);
\draw [fill=white, line width=1pt] (5.8,10) circle (0.28cm);
\draw [fill=white, line width=1pt] (4.5,9) circle (0.28cm);
\draw [fill=white, line width=1pt] (5.5,9) circle (0.28cm);
\draw [fill=white, line width=1pt] (5,11) circle (0.28cm);
\end{tikzpicture}
\vspace{3pt}
\end{minipage} & 
\begin{minipage}{0.085\textwidth}
\centering 
\begin{tikzpicture}[scale=0.45]
\draw [line width=1pt] (5,11)-- (4.2,10);
\draw [line width=1pt] (5,11)-- (5.8,10);
\draw [line width=1pt] (5,11)-- (4.5,9);
\draw [line width=1pt] (5,11)-- (5.5,9);
\draw [line width=1pt] (4.2,10)-- (5.8,10);
\draw [line width=1pt] (4.2,10)-- (4.5,9);
\draw [line width=1pt] (4.2,10)-- (5.5,9);
\draw [line width=1pt] (5.8,10)-- (4.5,9);
\draw [line width=1pt] (5.8,10)-- (5.5,9);
\draw [line width=1pt] (4.5,9)-- (5.5,9);
\draw [fill=white, line width=1pt] (4.2,10) circle (0.28cm);
\draw [fill=white, line width=1pt] (5.8,10) circle (0.28cm);
\draw [fill=white, line width=1pt] (4.5,9) circle (0.28cm);
\draw [fill=white, line width=1pt] (5.5,9) circle (0.28cm);
\draw [fill=white, line width=1pt] (5,11) circle (0.28cm);
\end{tikzpicture}
\end{minipage} \\ \hline
         5&  & & $\emptyset$ & 
\begin{minipage}{0.085\textwidth}
\centering 
\begin{tikzpicture}[scale=0.45]
\draw[red] [line width=1.5pt] (5,11)-- (4.2,10);
\draw[red] [line width=1.5pt] (5,11)-- (5.8,10);
\draw[red] [line width=1.5pt] (5,11)-- (4.5,9);
\draw[red] [line width=1.5pt] (5,11)-- (5.5,9);
\draw[red] [line width=1.5pt] (4.2,10)-- (5.8,10);
\draw[red] [line width=1.5pt] (4.2,10)-- (4.5,9);
\draw[red] [line width=1.5pt] (4.2,10)-- (5.5,9);
\draw[red] [line width=1.5pt] (5.8,10)-- (4.5,9);
\draw[red] [line width=1.5pt] (5.8,10)-- (5.5,9);
\draw[red] [line width=1.5pt] (4.5,9)-- (5.5,9);
\draw [fill=white, line width=1pt] (4.2,10) circle (0.28cm);
\draw [fill=white, line width=1pt] (5.8,10) circle (0.28cm);
\draw [fill=white, line width=1pt] (4.5,9) circle (0.28cm);
\draw [fill=white, line width=1pt] (5.5,9) circle (0.28cm);
\draw [fill=white, line width=1pt] (5,11) circle (0.28cm);
\end{tikzpicture}
\end{minipage} & 
\begin{minipage}{0.085\textwidth}
\centering 
\begin{tikzpicture}[scale=0.45]
\draw[red] [line width=1.5pt] (5,11)-- (4.2,10);
\draw[red] [line width=1.5pt] (5,11)-- (5.8,10);
\draw [line width=1pt] (5,11)-- (4.5,9);
\draw [line width=1pt] (5,11)-- (5.5,9);
\draw[red] [line width=1.5pt] (4.2,10)-- (5.8,10);
\draw [line width=1pt] (4.2,10)-- (4.5,9);
\draw [line width=1pt] (4.2,10)-- (5.5,9);
\draw [line width=1pt] (5.8,10)-- (4.5,9);
\draw [line width=1pt] (5.8,10)-- (5.5,9);
\draw[red] [line width=1.5pt] (4.5,9)-- (5.5,9);
\draw [fill=white, line width=1pt] (4.2,10) circle (0.28cm);
\draw [fill=white, line width=1pt] (5.8,10) circle (0.28cm);
\draw [fill=white, line width=1pt] (4.5,9) circle (0.28cm);
\draw [fill=white, line width=1pt] (5.5,9) circle (0.28cm);
\draw [fill=white, line width=1pt] (5,11) circle (0.28cm);
\end{tikzpicture}
\end{minipage} & 
\begin{minipage}{0.085\textwidth}
\centering 
\begin{tikzpicture}[scale=0.45]
\draw [line width=1pt] (5,11)-- (4.2,10);
\draw [line width=1pt] (5,11)-- (5.8,10);
\draw [line width=1pt] (5,11)-- (4.5,9);
\draw [line width=1pt] (5,11)-- (5.5,9);
\draw [line width=1pt] (4.2,10)-- (5.8,10);
\draw[red] [line width=1.5pt] (4.2,10)-- (4.5,9);
\draw [line width=1pt] (4.2,10)-- (5.5,9);
\draw [line width=1pt] (5.8,10)-- (4.5,9);
\draw[red] [line width=1.5pt] (5.8,10)-- (5.5,9);
\draw [line width=1pt] (4.5,9)-- (5.5,9);
\draw [fill=white, line width=1pt] (4.2,10) circle (0.28cm);
\draw [fill=white, line width=1pt] (5.8,10) circle (0.28cm);
\draw [fill=white, line width=1pt] (4.5,9) circle (0.28cm);
\draw [fill=white, line width=1pt] (5.5,9) circle (0.28cm);
\draw [fill=white, line width=1pt] (5,11) circle (0.28cm);
\end{tikzpicture}
\end{minipage}& 
\begin{minipage}{0.085\textwidth}
\centering 
\vspace{3pt}
\begin{tikzpicture}[scale=0.45]
\draw [line width=1pt] (5,11)-- (4.2,10);
\draw [line width=1pt] (5,11)-- (5.8,10);
\draw [line width=1pt] (5,11)-- (4.5,9);
\draw [line width=1pt] (5,11)-- (5.5,9);
\draw [line width=1pt] (4.2,10)-- (5.8,10);
\draw [line width=1pt] (4.2,10)-- (4.5,9);
\draw [line width=1pt] (4.2,10)-- (5.5,9);
\draw [line width=1pt] (5.8,10)-- (4.5,9);
\draw [line width=1pt] (5.8,10)-- (5.5,9);
\draw[red] [line width=1.5pt] (4.5,9)-- (5.5,9);
\draw [fill=white, line width=1pt] (4.2,10) circle (0.28cm);
\draw [fill=white, line width=1pt] (5.8,10) circle (0.28cm);
\draw [fill=white, line width=1pt] (4.5,9) circle (0.28cm);
\draw [fill=white, line width=1pt] (5.5,9) circle (0.28cm);
\draw [fill=white, line width=1pt] (5,11) circle (0.28cm);
\end{tikzpicture}
\vspace{3pt}
\end{minipage} & 
\begin{minipage}{0.085\textwidth}
\centering 
\begin{tikzpicture}[scale=0.45]
\draw [line width=1pt] (5,11)-- (4.2,10);
\draw [line width=1pt] (5,11)-- (5.8,10);
\draw [line width=1pt] (5,11)-- (4.5,9);
\draw [line width=1pt] (5,11)-- (5.5,9);
\draw [line width=1pt] (4.2,10)-- (5.8,10);
\draw [line width=1pt] (4.2,10)-- (4.5,9);
\draw [line width=1pt] (4.2,10)-- (5.5,9);
\draw [line width=1pt] (5.8,10)-- (4.5,9);
\draw [line width=1pt] (5.8,10)-- (5.5,9);
\draw [line width=1pt] (4.5,9)-- (5.5,9);
\draw [fill=white, line width=1pt] (4.2,10) circle (0.28cm);
\draw [fill=white, line width=1pt] (5.8,10) circle (0.28cm);
\draw [fill=white, line width=1pt] (4.5,9) circle (0.28cm);
\draw [fill=white, line width=1pt] (5.5,9) circle (0.28cm);
\draw [fill=white, line width=1pt] (5,11) circle (0.28cm);
\end{tikzpicture}
\end{minipage} \\ 
        \bottomrule
    \end{tabular}
    \vspace{5pt}\\
    Black edges have density 1 and red edges have density 1/2.
    \caption{Conjectured periodic extremal structure}
    \label{tab:my_label}
\end{table}

\begin{conj}[\cite{balogh2017on}]\label{conj}
    Given integers $t-2\ge s\ge 3$, there is an extremal graph for $\varrho_{s}(K_t)$ which admits
    \begin{itemize}
        \item[(i)] an $(s,t-1-s)$-partition if $s+2\le t\le 2s-1$, or
        \item[(ii)] a $(\lfloor \frac{t}{2}\rfloor,t-1-\lfloor \frac{t}{2}\rfloor)$-partition if $t\ge 2s$.
    \end{itemize}
\end{conj}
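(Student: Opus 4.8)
\medskip
\noindent\textbf{Proof proposal.} The plan is to prove that $\varrho_s(K_t)$ equals the maximum $K_s$-density over a bounded family of ``structured'' graphs, and then to analyse which member of this family is optimal; the conjecture asserts that this optimum is always the prescribed $(b,a)$-partition, and it is the second step that I expect to be both the hardest and the most fragile.

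\emph{Lower bound.} For every admissible pair $(b,a)$ with $b\ge a\ge1$ and $a+b\le t-1$ I would build an $n$-vertex graph $G_{b,a}$ admitting a $(b,a)$-partition that is $K_t$-free with $\alpha(G_{b,a})=o(n)$: take classes $V_1,\dots,V_a$ of size $n/a$, join every two of them completely, and inside each $V_i$ put a balanced $b_i$-layer ``multilayer Bollob\'as--Erd\H os graph'' whose layers are internally sparse (triangle-free with sublinear independence number) and whose pairs of layers carry a density-$\tfrac12$ graph with clique densities as in a quasirandom graph. Two things need checking: (a) $\omega(G_{b,a})=\sum_i(b_i+1)=a+b\le t-1$ and $\alpha(G_{b,a})=o(n)$, since an independent set is essentially confined to a single $V_i$ and each $V_i$ inherits sublinear independence number from the multilayer Bollob\'as--Erd\H os graph; and (b) the count of copies of $K_s$, where a copy chooses $r_i$ vertices in $V_i$ with $\sum r_i=s$ -- to leading order these occupy $r_i\le b_i$ distinct layers, one vertex per layer, contributing $\binom{b_i}{r_i}(n/(ab_i))^{r_i}2^{-\binom{r_i}{2}}$, while cross-class pairs are automatically present. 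This gives
\[
\varrho_s(K_t)\ \ge\ \max_{\substack{b\ge a\ge1\\ a+b\le t-1}}\ \frac{s!}{a^{s}}\sum_{\substack{r_1+\dots+r_a=s\\0\le r_i\le b_i}}\ \prod_{i=1}^{a}\binom{b_i}{r_i}\,b_i^{-r_i}\,2^{-\binom{r_i}{2}} ,
\]
and one must then solve this finite optimisation and match its maximiser with the value of $a$ named in the conjecture (that is, $a=t-1-s$ for $s+2\le t\le 2s-1$, and $a=t-1-\lfloor t/2\rfloor$ for $t\ge 2s$); the case $a=b$ recovers a Tur\'an-type (sparsened $T_a(n)$) construction, consistent with the $s=2$ picture $(\star)$.

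\emph{Upper bound.} Take an extremal $G$ and apply Szemer\'edi's regularity lemma. The hypothesis $\alpha(G)=o(n)$ forces each cluster to be either internally sparse (with sublinear independence number inside it) or so dense that a Ramsey argument inside produces a clique of unbounded size, which is impossible once $\delta$ is small relative to $t$; and it forces the reduced graph (edges for pairs of positive density) to have sublinear independence number. Classifying each regular pair by whether its density is near $0$, near $\tfrac12$, or near $1$ -- and showing that intermediate densities can be pushed to one of these, or are never optimal -- one obtains a bounded ``pattern'', a coloured reduced structure whose balanced blow-up embeds $K_t$ exactly when the pattern contains a clique of clusters whose combined ``clique capacity'' (determined by the colours among them and the internal multilayer depths) reaches $t$. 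The $K_s$-count of $G$ is then asymptotically the weighted $K_s$-count of the best pattern, so $\varrho_s(K_t)$ is the maximum of an explicit functional over a finite family of patterns; a stability argument should then show the optimal pattern can be taken of the rigid $(b,a)$-type (the relaxed statement), with the eventual periodicity for $t\gg s$ falling out of the shape of the optimiser.

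\emph{Main obstacle.} The crux -- exactly as in the $s=2$ case, where the even-clique analysis was by far the hardest part -- is proving that the pattern optimisation is solved by the \emph{particular} periodic $(b,a)$-partitions of the conjecture, and not by a competing pattern with, say, unequal class sizes, a class of larger internal clique number, or edge densities outside $\{0,\tfrac12,1\}$. I would expect a careful analysis to confirm that the optimum always has bounded and (for $t$ large) periodic structure, but to show that the precise parameters predicted by the conjecture are not always the maximisers; so this route should establish a relaxed version of the statement while disproving it as literally stated.
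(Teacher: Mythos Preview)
First, note that the statement is a \emph{conjecture} that the paper \emph{disproves} in general (for $s=5$, $t\in\{10,11\}$, and for all sufficiently large $s$ with $2s\le t\le 2.08s$), while establishing only a relaxed version (some $(b,a)$-partition with $a+b=t-1$ is always extremal). So there is no proof of the statement to compare against; your final paragraph correctly anticipates exactly this outcome.

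Your strategy is essentially the paper's. The paper formalises your ``pattern'' as a \emph{weighted graph} $R$ with edge weights in $[0,1]$, and your ``clique capacity reaching $t$'' as the forbidden \emph{weighted $t$-clique}: a pair $S_2\subseteq S_1\subseteq V(R)$ with $|S_1|+|S_2|=t$, $S_1$ spanning only edges of positive weight, and $S_2$ spanning only edges of weight $>\tfrac12$. The reduction $\varrho_s(K_t)=\pi_s(\mathcal K_t)$ is proved just as you outline (regularity for the upper bound; independently rotated Bollob\'as--Erd\H os graphs between pairs of layers for the lower bound, matching your ``multilayer'' construction). The paper then shows that every extremal $\mathcal K_t$-free weighted graph of minimum order has all edge weights in $\{\tfrac12,1\}$ and admits a $(b,a)$-partition with $a+b=t-1$ --- your ``stability'' step --- via a sequence of symmetrisation and replacement arguments on the weighted graph; this is the substantial technical core that your sketch does not detail. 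The counterexamples then come from directly comparing specific $(b,a)$-partitions with $b>\max\{s,\lfloor t/2\rfloor\}$ against the conjectured ones; e.g.\ for $(s,t)=(5,10)$, a balanced $(6,3)$-partition beats the conjectured $(5,4)$-partition.

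One small correction: your line that the reduced graph ``has sublinear independence number'' is not meaningful, since the reduced graph has bounded order. What the hypothesis $\alpha(G)=o(n)$ actually buys in the upper bound is that any linear-sized subset of a cluster contains an edge, so one can pick \emph{two} adjacent vertices from a cluster whenever the relevant cross-densities exceed $\tfrac12$; this is precisely why $\tfrac12$ is the threshold in the weighted-clique definition, and is the key point your sketch of the embedding step leaves implicit.
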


The preceding conjecture can be better understood, using the language of Definition~\ref{def:type}, as follows. For every $t=2p+r\ge 4$ with $r\in\{0,1\}$, the periodic behavior of $\varrho_2(K_t)$ in $(\star)$ can be rephrased as `\emph{an extremal graph for $\varrho_2(K_t)$ admits a $(p,p-1+r)$-partition}', which is precisely the statement of Conjecture~\ref{conj}(ii) when $s=2$. Thus, Conjecture~\ref{conj} speculates that similar periodic behavior occurs at the threshold $t\ge 2s$ for all $s\ge 3$ (see Table~\ref{tab:my_label}). Further supporting this prediction, it was proved in~\cite{balogh2017on} that Conjecture~\ref{conj} holds for $s=3$.

We present infinitely many counterexamples showing that Conjecture~\ref{conj} is false in general. The smallest counterexamples we observe are $s=5$ and $t\in \{10,11\}$ (see Figure~\ref{fig:s=5}). On the positive side, we prove that the predicted periodic behavior does eventually occur when $t\gg s$ for all $s\ge 3$. 

\begin{theorem}\label{main 2}
Conjecture \ref{conj} is false when $2s\leq t\leq 2.08s$ for sufficiently large $s$. Given $t-2\geq s\geq 3$, Conjecture \ref{conj} is true if $t>s^2(s-1)/2+s$, $t=s+2$, or $s=3,4$.
\end{theorem}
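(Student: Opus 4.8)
Throughout, the proof rests on the structural classification established in this paper, which reduces the evaluation of $\varrho_s(K_t)$ to a finite optimization over $(b,a)$-partitions. Spelled out, it gives
\[
\varrho_s(K_t)=s!\cdot\max_{\substack{b\ge a\ge 1,\ b\ge s\\ a+b=t-1}}P(b,a),
\qquad
P(b,a)=\max_{\substack{y_i\ge 0\\ \sum_i y_i=1}}\ \sum_{\substack{c_1+\dots+c_a=s\\ 0\le c_i\le b_i}}\ \prod_{i=1}^{a}\binom{b_i}{c_i}\Bigl(\tfrac{y_i}{b_i}\Bigr)^{c_i}2^{-\binom{c_i}{2}},
\]
where $b_1,\dots,b_a\in\{\lceil b/a\rceil,\lfloor b/a\rfloor\}$ is the balanced split of $b$ as in Definition~\ref{def:type}. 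Here the exponent vector $(c_1,\dots,c_a)$ records how many vertices a copy of $K_s$ places in each cluster (necessarily in distinct density-$o(1)$ atoms, as using two vertices of one atom is negligible), the weight $\prod_i\binom{b_i}{c_i}(y_i/b_i)^{c_i}$ counts these placements, the factor $2^{-\binom{c_i}{2}}$ accounts for the $\binom{c_i}{2}$ density-$\tfrac12$ pairs inside cluster $i$, the constraint $a+b=t-1$ is exactly $K_t$-freeness, and $b\ge s$ is feasibility. Thus it suffices, in each regime, to identify the maximizing pair and compare it with the prediction of Conjecture~\ref{conj}: $(s,t-1-s)$ for $t\le 2s-1$ and $(b_0,a_0):=(\lfloor t/2\rfloor,\lceil t/2\rceil-1)$ for $t\ge 2s$.

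Several cases are then immediate. When $t=s+2$, the constraints $b\ge s$, $b\ge a$ and $a+b=s+1$ force $(b,a)=(s,1)$, so this is the unique admissible pair and Conjecture~\ref{conj}(i) holds automatically; the case $s=3$ (all $t$) is the main theorem of~\cite{balogh2017on}. For $s=4$ there are only finitely many $t$ with $6\le t\le s^2(s-1)/2+s=28$, and for each only finitely many admissible $(b,a)$, so the predicted pair is confirmed by evaluating the corresponding (low-degree) polynomial optimizations $P(b,a)$ directly; for $t>28$ the argument for large $t$ below applies verbatim.

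For the \textbf{falsity} when $2s\le t\le 2.08s$ and $s$ is large, note that the prediction $(b_0,a_0)$ has the \emph{largest} admissible value of $a$ (as $b\ge a$), hence the most clusters and at most one cluster with $b_i=2$. We beat it by \emph{consolidating clusters}: for a parameter $k\ge 1$ take $a=a_0-k$, forcing $b=b_0+k$; this creates about $2k$ clusters with $b_i=2$ (all $b_i\in\{1,2\}$ for $t$ in this range). When distributing the $s$ vertices of a $K_s$ there are now $b-s\ge k$ unused slots, so one may leave several $b_i=1$ clusters empty, and the number of ways to choose which to skip contributes a binomial \emph{entropy factor} to $P$. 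After optimizing the weights (the $b_i=2$ clusters want roughly twice the weight of the $b_i=1$ clusters) and applying Stirling's formula, one finds that $P(b_0+k,a_0-k)/P(b_0,a_0)$ grows, up to polynomial factors, like $(s/4)^k/k!$, which is maximized for $k$ of order $s$ and is then exponentially large; hence $P(b_0,a_0)$ is not the maximum. Pinning down the exact range of $t$ for which the best such $k$ yields a genuine improvement — which requires controlling feasibility of the extremal placement (the ``all $b_i=2$ clusters at $c_i=2$'' placement needs $a\le t-1-s$) — produces the window $2s\le t\le 2.08s$. The main obstacle here is pushing this two-parameter asymptotic optimization (over the weight vector and over $k$) to enough precision to extract the constant $2.08$.

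For the \textbf{truth} when $t>s^2(s-1)/2+s$, we have $a_0\gg s$, so in the predicted structure a copy of $K_s$ can occupy $s$ distinct clusters and never pay a penalty, whence $s!\,P(b_0,a_0)=\prod_{i=0}^{s-1}(1-i/a_0)+(\text{lower order})$, which is close to $1$. We rule out every admissible competitor $(b,a)$. If $a<s$, every placement pays at least one $2^{-1}$ factor, all $b_i$ have size about $t/s$, and a direct estimate shows $s!\,P(b,a)$ is bounded away from $1$ (in fact small in $s$). If $a\ge s$, the competitor needs no penalty, its leading term is $\binom{a}{s}a^{-s}=\tfrac1{s!}\prod_{i<s}(1-i/a)$ with penalty corrections of relative size $O\!\big((b-a)s^2/(a(a-s+1))\big)$, so $s!\,P(b,a)$ is, to the relevant order, increasing in $a\le a_0$. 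In either case $P(b,a)<P(b_0,a_0)$, and the threshold $t>s^2(s-1)/2+s$ is exactly what makes these comparisons go through uniformly (we do not claim it to be optimal). Hence $\max_{(b,a)}P(b,a)=P(b_0,a_0)$, confirming Conjecture~\ref{conj}(ii) in this range. The delicate point throughout is the $a<s$ analysis, where the forced penalty and the gain from having many atoms per cluster must be balanced quantitatively.
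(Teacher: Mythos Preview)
Your reduction to the weighted optimization is correct, and the cases $t=s+2$ and $s=3$ are handled as in the paper. Beyond that, however, the sketch diverges substantially from the paper's arguments and contains genuine gaps.

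For the counterexamples the paper does something far simpler than your ``$k$ of order $s$'' consolidation: it adds \emph{one} extra vertex. For odd $t=2r+1$ it compares $K_r^w$ with the $(r+1)$-vertex graph having two disjoint weight-$\tfrac12$ edges (the four incident vertices getting weight $3/4r$, the rest $1/r$), and computes the ratio directly as $\tfrac{r+1}{r+1-s}\cdot\tfrac{81}{1024}>1$ for $r\le 1.04s$. Even $t$ is similar with three weight-$\tfrac12$ edges. No Stirling, no optimization over $k$; the constant $2.08$ falls out of a one-line inequality.

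For $t>s^2(s-1)/2+s$ your approach has a real gap. The paper does \emph{not} estimate $P(b,a)$ termwise. Instead it observes that for any weighted graph $R$ with edge weights in $\{\tfrac12,1\}$, rounding all $\tfrac12$'s down to $0$ (yielding $R_0$) or up to $1$ (yielding $R_1$) gives $d_{K_s}(R)\le\tfrac12(d_{K_s}(R_0)+d_{K_s}(R_1))$, since $2^{-m}\le\tfrac12$ for $m\ge1$. By Maclaurin, $d_{K_s}(R_0)\le f(a)$ and $d_{K_s}(R_1)\le f(b)$ where $f(x)=\prod_{i=0}^{s-1}(1-i/x)$. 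One then checks $f''(x)<0$ for $x\ge\binom{s}{2}$, and property~\ref{sup5} forces $a\ge(t-1)/s\ge\binom{s}{2}$, so Jensen gives $\tfrac12(f(a)+f(b))\le f(\tfrac{a+b}{2})=d_{K_s}(K_r^w)$. Your direct estimate cannot close the argument: your ``penalty corrections of relative size $O\big((b-a)s^2/(a(a-s+1))\big)$'' are \emph{not} small when $a$ is near $s$ (indeed they blow up), so the monotonicity claim is unjustified exactly where it matters. Moreover, your separate ``$a<s$'' case is vacuous here: \ref{sup5} already forces $a\ge(t-1)/s>s$.

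Finally, $s=4$ is not disposed of by a ``finite check'': each $P(b,a)$ is a constrained polynomial maximization over a simplex, not a finite evaluation. The paper handles $s=4$ by explicit local modifications in three cases (depending on whether the two largest clusters have sizes $(3,3)$, $(3,2)$, or $(2,2)$), each time exhibiting a $\C K_t$-free weighted graph with strictly larger $K_4$-density.
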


Furthermore, our main result shows that a modified version of Conjecture~\ref{conj} is true (which was the motivation for Definition~\ref{def:type}). It reads as follows.

\begin{theorem}\label{main 1}
  For all integers $t-2\geq s\ge 3$, there is a family of extremal graphs for $\varrho_s(K_t)$ admitting a $(b,a)$-partition for some parameters $1\le a\leq b$ satisfying $a+b=t-1$.
\end{theorem}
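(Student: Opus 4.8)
The plan is to sandwich $\varrho_s(K_t)$ between a lower bound coming from an explicit family of $(b,a)$-partition graphs and a matching upper bound obtained by analysing the structure of extremal graphs, so that computing $\varrho_s(K_t)$ reduces to a finite optimization over pairs $(a,b)$ with $1\le a\le b$ and $a+b=t-1$; the optimal pair then exhibits the structure asserted by \cref{main 1}. Throughout I would freely use the solution and stability of the $s=2$ case \cite{erdos1969some,Bollobas1976ramseyturan,erdo1983more}, together with the existence of a generalized Bollob\'as--Erd\H{o}s graph $\BE^{(r)}$ on any number $r\ge 1$ of spherical layers: an $n$-vertex $K_{r+2}$-free graph with independence number $o(n)$, density $o(1)$ inside each layer, and density $\tfrac12-o(1)$ between layers (for $r=1$ this is a suitably sparse pseudorandom $K_3$-free graph, and for $r=2$ it is the classical construction).

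For the lower bound, for each admissible $(a,b)$ I would form $G_{a,b}$ on $n$ vertices by partitioning $V$ into super-parts $U_1,\dots,U_a$, joining distinct $U_i,U_j$ completely, and placing inside $U_i$ a copy of $\BE^{(b_i)}$ with $b_i\in\{\lceil b/a\rceil,\lfloor b/a\rfloor\}$ and $\sum b_i=b$. Since clique number is additive under complete joins and $\omega(\BE^{(b_i)})\le b_i+1$, we get $\omega(G_{a,b})\le\sum(b_i+1)=a+b=t-1$, so $G_{a,b}$ is $K_t$-free; and $\alpha(G_{a,b})=\max_i\alpha(\BE^{(b_i)})=o(n)$ because distinct super-parts are completely joined. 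Counting copies of $K_s$ is then a bounded optimization over the super-part sizes: up to a negligible $o(n^s)$ error every $K_s$ uses at most one vertex per layer, so it selects a transversal clique on $j_i$ layers of $U_i$ with $\sum j_i=s$ and $j_i\le b_i$, and the number of such configurations is a fixed polynomial in the layer sizes. Optimizing over the sizes yields $g_s(a,b)$, and hence $\varrho_s(K_t)\ge\rho^*:=\max_{a+b=t-1,\,a\le b}g_s(a,b)$, realized (for the winning $(a,b)$ and every large $n$) by a $(b,a)$-partition graph.

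For the upper bound I would take a sequence of extremal graphs, apply the regularity lemma (as in \cite{erdo1983more}) or pass to a graphon limit, and reduce to maximizing a $K_s$-density functional over weighted ``pattern'' graphs on boundedly many atoms whose blow-ups are essentially $K_t$-free with independence number $o(n)$. The crucial point is that the independence bound is inherited by every sub-configuration on a constant number of atoms; thus every ``dense region'' of the pattern is an induced subgraph that is $K_c$-free for some $c<t$ (dictated by the complete joins around it) with $\alpha=o(n)$, so by the $s=2$ theory applied recursively it is forced to the $(\star)$-type shape, namely a union of $\BE$-blocks linked by complete joins. It then remains to canonicalize the global pattern by local moves that never decrease the $K_s$-count while preserving $K_t$-freeness and the independence bound: (i) push intermediate inter-atom weights to $\{0,\tfrac12,1\}$, the value $\tfrac12$ appearing only within super-parts as forced by the $s=2$ structure; (ii) if two super-parts fail to be completely joined, complete the join, which cannot create a $K_t$ since the clique budget $\sum(b_i+1)$ has not reached $t-1$, and strictly increases the count; (iii) by the same budget consideration, at the optimum $\sum(b_i+1)=t-1$, i.e.\ $a+b=t-1$. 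This identifies the optimal pattern as a $(b,a)$-partition pattern and gives $\varrho_s(K_t)\le\rho^*$, closing the loop. (An alternative organization is induction on $t$: for almost every vertex $v$ the neighbourhood $G[N(v)]$ should be extremal for $\varrho_{s-1}(K_{t-1})$, whose structure is known inductively, and $G$ is reassembled from these links; the base cases $t=s+2$ and $s\in\{3,4\}$ are available from \cite{balogh2017on} and \cref{main 2}.)

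The main obstacle is steps (ii)--(iii) of the structural analysis, specifically two points. First, one needs a sufficiently robust stability version of the $s=2$ theorem --- that any $K_c$-free graph with $\alpha=o(n)$ and almost maximal edge density is $o(n^2)$-close to a $(\star)$-structure --- in a form that can be fed recursively into the pattern analysis; extracting this from \cite{erdo1983more} (or reproving it) is delicate precisely because a ``$\tfrac12$-dense'' block is Ramsey--Tur\'an-extremal yet far from a complete join. Second, and this is where most of the computation lives, one must control the $K_j$-density inside $\BE^{(r)}$ for all $j\le r+1$ and show that balancing the layer sizes is optimal; it is the outcome of this optimization that makes the winning $(a,b)$ depend on the arithmetic relation between $t$ and $s$ (cf.\ \cref{main 2}), so it cannot be black-boxed. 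A final technical point is to maintain the $\alpha=o(n)$ guarantee through every local modification, which may force re-blowing-up the $\BE$-blocks at each step.
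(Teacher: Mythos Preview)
Your lower bound is essentially the paper's: the generalized $\BE^{(r)}$ you invoke is precisely what \cref{reduction-LB} builds via independently rotated Bollob\'as--Erd\H os bipartite pieces, and your clique-count argument $\omega(G_{a,b})\le\sum_i(b_i+1)=a+b=t-1$ is the $(S_1,S_2)$ argument there in disguise. So that half is fine.

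The upper bound, however, has a real gap, and it is exactly where you flag the obstacle. Your plan to force the pattern into $(\star)$-shape by ``recursively applying the $s=2$ theory'' does not go through as stated: the $s=2$ stability concerns \emph{edge}-extremal graphs, whereas a region sitting inside a $K_s$-extremal configuration has no reason to be edge-extremal (or even edge-dense) for its local clique constraint, so the $(\star)$ structure is not forced there. The paper sidesteps this entirely by introducing the right forbidden object at the weighted level: a pair $S_2\subseteq S_1$ with $|S_1|+|S_2|=t$, $S_1$ a clique in $R_{>0}$ and $S_2$ a clique in $R_{>1/2}$ (\cref{thm:reduction-to-R}). This single condition exactly captures ``the blow-up is $K_t$-free with $\alpha=o(n)$'', so after regularity one works purely with $\C K_t$-free weighted graphs and never touches the $s=2$ stability. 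The structural conclusions then come from direct symmetrization on the weighted graph (\cref{thm:weight:main}): any weight-$\tfrac12$ edge forces its endpoints to be twins via an AM--GM swap, which gives the partition into parts $B_1,\dots,B_a$ and the identity $a+b=t-1$ with no recursion. Note also that \cref{def:type} requires the $b_i$ to differ by at most one; this balance is the content of \ref{sup3}, proved by a delicate comparison of elementary symmetric functions (\cref{P3-1,P3-2,P3-3}) that your outline does not address. Finally, your proposed induction on $t$ via links $G[N(v)]$ runs into the same problem: the link of a vertex in a $K_s$-extremal graph is $K_{t-1}$-free with small independence number but need not be near-extremal for $\varrho_{s-1}(K_{t-1})$, so the inductive hypothesis does not bite.
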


Theorem~\ref{main 1} provides a detailed description of the extremal graphs for the generalized Ramsey-Tur\'an problem for cliques, showing that they have simple and bounded structures. We remark that Theorem~\ref{main 1} resolves \emph{combinatorially} the problem of determining the Ramsey--Tur\'an density $\varrho_s(K_t)$. Indeed, given $s$ and $t$, there is a bounded number of choices for $a$ (because $a\leq t-1$). Once $a$ is fixed, the structure of a graph admitting a $(b,a)$-partition is determined, so its $K_s$-density may be computed in terms of the fractions of vertices allocated to each part. Thus, Theorem~\ref{main 1} reduces determining $\varrho_s(K_t)$ to a bounded optimization problem (over $a+b=t-1$).

\paragraph{Organization.} The rest of the paper is structured as follows. The proof of~\cref{main 1} consists of two parts. We first reduce it to a more tractable problem about clique densities in weighted graphs (see~\cref{thm:reduction-to-R}) in \cref{sec:reduction}. Understanding this weighted problem is the bulk of the proof (see \cref{thm:weight:main}); we study it in \cref{sec:extremal-weighted}.
 In Section \ref{section:5}, we give the proof of \cref{main 2}. Concluding remarks are given in Section \ref{sec:concluding remarks}.

\paragraph{Notation.} We use $[n]$ to denote the finite set $\{1,2 ,\ldots,n\}$. For a vector $\textbf{\textit{u}}=(u_1 ,\ldots,u_k)\in \mathbb{R}^k$, we write $\|\textbf{\textit{u}}\|=\sqrt{\sum_{i=1}^k u_i^2}$ for its $\ell_2$-norm. Let $G=(V(G), E(G))$ be a graph. For every $U,V\subseteq V(G)$, denote by $G[U,V]$ the induced bipartite subgraph of $G$ on partite sets $U$ and $V$, and by $G[U]$ the induced subgraph of $G$ on set $U$. For convenience, we let $G-U=G[V(G)\setminus U]$.

\section{Reduction to Weighted Graphs}\label{sec:reduction}

The first step of the proof of \cref{main 1} reduces understanding Ramsey--Tur\'an density to a problem about clique density in weighted graphs.  The aim of this section will be to prove \cref{thm:reduction-to-R}, which demonstrates the equivalence between these two problems. However, before we can state \cref{thm:reduction-to-R}, we need to define our notion of weighted graphs.

\begin{defn}
A \emph{weighted graph} $R=(V,w)$ consists of a finite vertex set $V$ together with a \emph{weight function} $w:V\sqcup V^2\to[0,1]$ satisfying the following two properties. The vertex weights must sum to one, i.e., $\sum_{v\in V}w(v)=1$. Additionally, the edge weights must satisfy $w(v,v')=w(v',v)$ and $w(v,v)=0$ for any $v,v'\in V$. For $\alpha\in[0,1]$, denote by $R_{>\alpha}$ the spanning subgraph of $R$ with all edges of weight larger than $\alpha$.
\end{defn}

Intuitively, a weighted graph may be thought of as a type of graph limit with a more discrete structure than a graphon. An $r$-vertex weighted graph can also be considered to represent a large $r$-partite graph $G$ whose $i$th part $V_i$ contains a $w(i)$-fraction of the vertex set, such that each induced bipartite subgraph $G[V_i,V_j]$ is a random graph of density $w(i,j)$.

With this perspective in mind, we define subgraph densities in a weighted graph.
\begin{defn}
Let $H$ be a graph with vertex set $[s]$. The \emph{$H$-density} of a weighted graph $R$ is defined as
\[
d_H(R)
=\E_{\substack{v_1,\ldots,v_s\\\in V(R)}}\left[\prod_{ij\in E(H)}w(v_i,v_j)\right]
=\sum_{\sigma: [s]\rightarrow  V(R)}\left(\prod_{i=1}^sw(\sigma(i))\right)\left(\prod_{ij\in E(H)}w(\sigma(i),\sigma(j))\right),
\]
where vertices $v_1,\ldots,v_s\in V(R)$ are chosen independently at random according to the vertex weights of $R$.
\end{defn}

We shall show that the Ramsey--Tur\'an density $\varrho_{s}(K_t)$ is determined by the maximum possible $K_s$-density in a weighted graph avoiding the following forbidden configuration.

\begin{defn}
Let $R$ be a weighted graph and $t\in\mathbb{N}$. The \emph{weighted $t$-clique family} $\C K_t$ consists of all pairs of subsets $(S_1,S_2)$ with $S_2\subseteq S_1\subseteq V(R)$, $s_1=|S_1|,s_2=|S_2|\ge 1$ and $s_1+s_2=t$ such that $S_1$ induces a $K_{s_1}$ in $R_{>0}$ and $S_2$ induces a $K_{s_2}$ in $R_{>\frac{1}{2}}$.We say $R$ is \emph{$\C K_t$--free} if $R$ contains no such pair $(S_1,S_2)$.
\end{defn}

We can then define the $K_s$-Tur\'an density of $\C K_t$ as
$$\pi_{s}(\C K_t)=\sup\{d_{K_s}(R):~\text{$R$ is a $\C K_t$-free weighted graph}\}.$$

At this point, we may state the main result of this section.
\begin{theorem}\label{thm:reduction-to-R}
For $s,t\in\mathbb{N}$ with $2\leq s\leq t-1$, we have $\varrho_{s}(K_t)=\pi_{s}(\C K_t)$.
\end{theorem}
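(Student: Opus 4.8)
The plan is to prove the two inequalities $\varrho_s(K_t)\le \pi_s(\C K_t)$ and $\varrho_s(K_t)\ge \pi_s(\C K_t)$ separately, translating back and forth between $K_t$-free graphs with sublinear independence number and $\C K_t$-free weighted graphs via the regularity method.

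\paragraph{From graphs to weighted graphs (the upper bound $\varrho_s(K_t)\le\pi_s(\C K_t)$).}
Let $G$ be an $n$-vertex $K_t$-free graph with $\alpha(G)<\delta n$ whose $K_s$-density is close to $\varrho_s(K_t)$. I would apply Szemer\'edi's regularity lemma to get an $\eps$-regular partition $V_1\cup\cdots\cup V_r$ (with $r$ bounded, $r\ge 1/\eps$), form the associated weighted graph $R=([r],w)$ by setting $w(i)=|V_i|/n$ (up to discarding a negligible exceptional set, or by making parts equal and folding the exceptional vertices in) and $w(i,j)=d(V_i,V_j)$ when the pair is $\eps$-regular and $w(i,j)=0$ otherwise. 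A standard counting-lemma argument shows $d_{K_s}(R)=d_{K_s}(G)-o_\eps(1)$, so in the limit $d_{K_s}(R)\ge\varrho_s(K_t)-o(1)$. The key point is that $R$ must be $\C K_t$-free: if $(S_1,S_2)$ were a forbidden pair, with $S_1$ spanning a clique in $R_{>0}$ and $S_2\subseteq S_1$ spanning a clique in $R_{>1/2}$, then one embeds $s_1$ vertices of $K_t$ into the corresponding parts $\{V_i:i\in S_1\}$ and then, because each pair $G[V_i,V_j]$ with $i,j\in S_2$ has density $>1/2$ while the independence number inside each $V_i$ is $<\delta n\ll |V_i|$, one can find an additional independent-set-like structure... more precisely, the Bollob\'as--Erd\H os / Erd\H os--S\'os style argument: inside the pieces indexed by $S_2$, using that all cross-densities exceed $1/2$, one greedily finds $s_2$ further vertices forming a clique that, together with the first $s_1$, gives a $K_{s_1+s_2}=K_t$ in $G$. (The density $>1/2$ threshold is exactly what makes the bipartite-graph-with-no-large-independent-set argument for extending a clique work — this is the mechanism behind $\varrho_2(K_{2k+1})=\frac{k-1}{k}$.) Hence $d_{K_s}(R)\le\pi_s(\C K_t)$, giving the upper bound after taking limits.

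\paragraph{From weighted graphs to graphs (the lower bound $\varrho_s(K_t)\ge\pi_s(\C K_t)$).}
Conversely, fix a $\C K_t$-free weighted graph $R=([r],w)$ with $d_{K_s}(R)$ close to $\pi_s(\C K_t)$. I would build an $n$-vertex graph $G$ by taking parts $V_1,\ldots,V_r$ of sizes $\approx w(i)n$, putting inside each $V_i$ with $w(i)>0$ a \emph{pseudorandom} graph of small density and small independence number on $|V_i|$ vertices (e.g.\ a Ramsey-type graph, so that $\alpha(G[V_i])=o(|V_i|)$), and between $V_i$ and $V_j$ a (pseudo)random bipartite graph of density $w(i,j)$ — with the crucial modification that edges of weight exactly in $(0,1/2]$ should be realized by a \emph{bipartite graph with no large independent set on either side whose density is $\le 1/2$}, namely a balanced blow-up of a suitable bipartite Ramsey graph, to keep $\alpha(G)$ sublinear while not creating the forbidden extension. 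The $K_s$-count of $G$ is $(d_{K_s}(R)+o(1))\binom ns$ by pseudorandomness, and $\alpha(G)=o(n)$ because every large vertex set meets some part in many vertices and those parts have sublinear independence number (here one needs that within-part and low-density cross-part graphs are still Ramsey-type). The main thing to check is that $G$ is $K_t$-free: any $K_t$ in $G$ uses some multiset of parts; letting $S_1$ be the set of parts used and $S_2$ the set of parts contributing $\ge 2$ vertices to the clique, one argues $S_1$ spans a clique in $R_{>0}$ (any pair of distinct used parts must have positive weight) and $S_2$ spans a clique in $R_{>1/2}$ (two vertices of the $K_t$ in the same part $V_i$ are nonadjacent unless... — here one must be slightly careful: within a part the graph has density $o(1)$, so actually at most one vertex of a clique lies in each part once parts are "sparse", which forces $S_2=\emptyset$ unless we instead model the "$b_i>1$" structure). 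This is the subtle point and is exactly where the $R_{>1/2}$ condition and the definition of $\C K_t$ (allowing $S_2\subseteq S_1$, i.e.\ parts used with high multiplicity, provided their mutual densities exceed $1/2$) are tailored: a part of "weight-$1/2$ type" should really be thought of as a blown-up structure that can absorb two clique vertices, and one shows that a $K_t$ forces a $\C K_t$-configuration. Since $R$ is $\C K_t$-free, no $K_t$ exists, so $d_{K_s}(G)\le\varrho_s(K_t)+o(1)$, and letting $n\to\infty$ and the pseudorandomness parameters tend to $0$ yields $\pi_s(\C K_t)\le\varrho_s(K_t)$.

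\paragraph{Main obstacle.}
I expect the delicate part to be the bookkeeping in \emph{both} directions around the role of the weight threshold $1/2$ and the distinction between $S_1$ (clique in $R_{>0}$) and $S_2$ (clique in $R_{>1/2}$): in the graph-to-weighted direction one must show that an edge of weight in $(0,1/2]$ cannot be used "twice" to extend a clique but an edge of weight $>1/2$ can, which is precisely the Ramsey--Tur\'an phenomenon ($\alpha=o(n)$ lets you find a large independent set to recurse into only when the bipartite density exceeds $1/2$); in the weighted-to-graph direction one must realize this faithfully with an explicit pseudorandom (Bollob\'as--Erd\H os-type or random Ramsey-graph) construction so that $\alpha(G)=o(n)$ is preserved while no $K_t$ appears. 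Getting the counting lemma for $K_s$-density to cooperate with the exceptional/irregular pairs (so the $o(1)$ error is genuinely controlled as $\eps\to0$ and $\delta\to0$) is routine but must be stated carefully. Once these are in place, taking the iterated limits defining $\varrho_s$ matches the supremum defining $\pi_s$, completing the proof.
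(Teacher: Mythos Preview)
Your upper-bound direction is essentially the paper's argument: regularize, form the reduced weighted graph, use the counting lemma for $K_s$, and show $\C K_t$-freeness by a greedy embedding that picks one vertex per part in $S_1\setminus S_2$ and two adjacent vertices per part in $S_2$ (the density $>\tfrac12$ threshold combined with $\alpha(G)=o(n)$ is exactly what lets you find an edge in the surviving common neighbourhood). That part is fine.

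The lower-bound direction, however, has a real gap. Your proposed construction --- sparse pseudorandom graphs inside parts and ``(pseudo)random bipartite graphs of density $w(i,j)$'' between parts --- will not be $K_t$-free, and your own hesitation (``within a part the graph has density $o(1)$, so actually at most one vertex of a clique lies in each part $\ldots$ which forces $S_2=\emptyset$'') diagnoses the problem. A sparse graph inside a part is not $K_3$-free, so a clique could put three or more vertices in one part; and a random bipartite graph of density $\tfrac12$ between two parts, together with whatever you put inside, is certainly not $K_4$-free, so two parts joined with weight $\tfrac12$ could each contribute two clique vertices. In either case the map ``$S_1=$ parts used, $S_2=$ parts used twice'' fails to produce a weighted $t$-clique in $R$ and you cannot conclude $K_t$-freeness from $\C K_t$-freeness.

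What the paper does instead is first round all edge weights of $R$ up to the nearest element of $\{0,\tfrac12,1\}$ (this preserves $\C K_t$-freeness and only increases $d_{K_s}$), and then realize $R$ using the Bollob\'as--Erd\H os sphere construction in a coordinated way: each part $V_i$ is a set of points on $\sph^{h-1}$ with edges inside $V_i$ between near-antipodal points (so $G[V_i]$ is $K_3$-free with $\alpha(G[V_i])=o(|V_i|)$); a pair with weight $1$ is joined completely; a pair with weight $\tfrac12$ is joined via a random rotation so that $G[V_i\cup V_j]$ is literally a Bollob\'as--Erd\H os graph and hence $K_4$-free. Now any $K_t$ in $G$ hits each part in at most two vertices (by $K_3$-freeness), and any two parts hit twice must be joined with weight $1$ (by $K_4$-freeness), so $(S_1,S_2)$ really is a weighted $t$-clique in $R$. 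The geometry is not decoration here: it is precisely what synchronizes the within-part and cross-part structures so that the thresholds $0$ and $\tfrac12$ in the definition of $\C K_t$ correspond to the clique-capacity of each pair being $2$ versus $4$. A generic pseudorandom or Ramsey-graph construction does not give you this, and that is the missing idea in your proposal.
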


The upper and lower bounds of \cref{thm:reduction-to-R} will be proven in the next two subsections.

\subsection{Upper bound}

Our proof of the upper bound on \cref{thm:reduction-to-R} relies on Szemer\'edi's regularity lemma. The regularity lemma states that any graph looks $\eps$-close to a weighted graph whose number of vertices is bounded in terms of $\eps$. We recall its statement here, beginning with some preliminary definitions.

\begin{defn}
    Let $G$ be a graph and let $X, Y\subseteq V(G)$. The \emph{edge density} between $X$ and $Y$, denoted by $d(X,Y)$, is the fraction of pairs $(x,y)\in X\times Y$ that are edges of $G$. Given $\eps>0$, we say the pair $(X,Y)$ is \emph{$\eps$-regular} if, for all $X'\subseteq X$ and $Y'\subseteq Y$ with $|X'|\geq \eps|X|$ and $|Y'|\geq\eps|Y|$, we have $|d(X',Y')-d(X,Y)|<\eps$.
\end{defn}

\begin{defn}
Let $G$ be a graph. A vertex partition $V(G)=V_1\cup\cdots\cup V_r\cup V_{r+1}$ is called \emph{$\eps$-regular} if $|V_1|=\cdots=|V_r|$ and $|V_{r+1}|\leq \eps n$, and additionally at most $\eps r^2$ pairs $(V_i,V_j)$ with $i<j\leq r$ are not $\eps$-regular.
\end{defn}

\begin{theorem}[Regularity Lemma, \cite{Szemerdi1976Regular}]\label{thm: regularity}
For every small constant $\eps>0$ and integer $M_0$, there exists an integer $M=M(\eps,M_0)$ such that the following holds. Given any $n$-vertex graph $G$, there is an $\eps$-regular partition of its vertices $V(G)=V_1\cup\cdots\cup V_r\cup V_{r+1}$ such that $M_0\leq r\leq M$.
\end{theorem}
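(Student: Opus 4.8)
The plan is to run Szemer\'edi's original energy-increment argument. For disjoint sets $X,Y\subseteq V(G)$, call $q(X,Y)=\frac{|X|\,|Y|}{n^2}\,d(X,Y)^2$ the \emph{energy} of the pair, and for a vertex partition $\mathcal{P}=\{W_0,W_1,\ldots,W_k\}$ (where $W_0$ is an ``exceptional'' part, which for the purpose of this definition we regard as broken into singletons) set $q(\mathcal{P})=\sum_{0\le i\le j\le k}q(W_i,W_j)$. Since every edge density lies in $[0,1]$ and the coefficients $\frac{|W_i||W_j|}{n^2}$ sum to at most $1$, we always have $q(\mathcal{P})\in[0,1]$. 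Two facts drive the proof. \emph{(Monotonicity.)} If $\mathcal{Q}$ refines $\mathcal{P}$, then $q(\mathcal{Q})\ge q(\mathcal{P})$; this is the defect form of the Cauchy--Schwarz inequality, applied within each cell $W_i\times W_j$ to the function equal to the constant $d(W_i,W_j)$ there. \emph{(Increment from irregularity.)} If the pair $(X,Y)$ is not $\eps$-regular, witnessed by $X'\subseteq X$, $Y'\subseteq Y$ with $|X'|\ge\eps|X|$, $|Y'|\ge\eps|Y|$ and $|d(X',Y')-d(X,Y)|\ge\eps$, then subdividing $X$ by $\{X',X\setminus X'\}$ and $Y$ by $\{Y',Y\setminus Y'\}$ increases the contribution of this pair to $q$ by at least $\eps^4\cdot\frac{|X|\,|Y|}{n^2}$, the gain being exactly the ``defect'' term $\frac{|X'|\,|Y'|}{n^2}(d(X',Y')-d(X,Y))^2\ge\eps^2\cdot\eps^2\cdot\frac{|X|\,|Y|}{n^2}$ appearing in the same Cauchy--Schwarz expansion.

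Combining these yields the increment step: if $\mathcal{P}$ is an equipartition into $k$ ordinary parts (plus a small exceptional part) that fails to be $\eps$-regular, then there is a further equipartition $\mathcal{P}'$ with at most $k\cdot 4^{k}$ ordinary parts, a comparably small exceptional part, and $q(\mathcal{P}')\ge q(\mathcal{P})+\tfrac12\eps^5$. Indeed, more than $\eps k^2$ of the pairs $(W_i,W_j)$ are irregular; choosing a pair of witnesses for each and simultaneously subdividing every $W_i$ by all of its witness sets splits each ordinary part into at most $2^k$ pieces, and (by the increment fact together with monotonicity) each irregular pair then contributes an extra $\ge\eps^4\frac{|W_i||W_j|}{n^2}\approx\eps^4/k^2$, using that $\mathcal{P}$ is an equipartition with $W_0$ small; the total gain is therefore at least $\eps k^2\cdot\eps^4/k^2\cdot(1-o(1))\ge\tfrac12\eps^5$. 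Finally one re-equitablizes: cut each of the pieces into blocks of a common size and move the $O(k4^k)$ leftover blocks into the exceptional part. Re-equitablizing is a refinement, so by monotonicity it does not undo the gain, and since $n$ is large compared with all the bounds involved, the common block size can be taken small enough that the exceptional part grows by less than $\eps n\cdot 2^{-k}$ at this step.

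I would then iterate, starting from an arbitrary equipartition $\mathcal{P}_0$ into $\max(M_0,\lceil 1/\eps\rceil)$ ordinary parts with empty exceptional part. Since $q$ is bounded above by $1$ and rises by at least $\tfrac12\eps^5$ at each non-terminal step, the process terminates after at most $2\eps^{-5}$ steps, necessarily at an $\eps$-regular equipartition whose exceptional part has size $<\eps n$ (summing the per-step growths $\eps n\,2^{-k}$); since parts are only ever subdivided, the number of ordinary parts $r$ stays $\ge M_0$. The part-count is multiplied by at most $4^{(\text{current count})}$ per step, so after $\le 2\eps^{-5}$ steps it is bounded by an iterated-exponential tower of height $O(\eps^{-5})$ with base depending on $M_0$; let $M=M(\eps,M_0)$ be this bound. (For $n<M$ the conclusion is trivial, e.g.\ by taking all parts to be singletons.) This proves the theorem.

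The step I expect to be the only genuinely fiddly one is the equitability bookkeeping in the increment step: checking that re-equitablizing preserves the $\eps^5$ energy gain (it does, by monotonicity), that the exceptional part stays below $\eps n$ across all $O(\eps^{-5})$ iterations, and that the resulting recursion for the number of parts still terminates with a finite (tower-type) bound $M$. The conceptual heart — the two Cauchy--Schwarz facts — is short.
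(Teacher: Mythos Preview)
The paper does not prove this statement: it is Szemer\'edi's Regularity Lemma, quoted from the literature with a citation and used as a black box in the proof of \cref{reduction-UB}. There is therefore no ``paper's own proof'' to compare your attempt against.

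That said, your sketch is the standard energy-increment argument and is essentially correct. A couple of minor remarks: in the increment step you should be slightly careful that the approximation $\frac{|W_i||W_j|}{n^2}\approx 1/k^2$ and the resulting constant $\tfrac12$ in the gain $\tfrac12\eps^5$ are not disturbed by the exceptional part (this is fine since $|W_0|<\eps n$ throughout, but it deserves a sentence), and the bookkeeping showing the exceptional part stays below $\eps n$ over all iterations needs the per-step growth bound to be summable, which your $\eps n\cdot 2^{-k}$ achieves. None of this is a gap, just the ``fiddly'' part you already flagged.
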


We also require the graph counting lemma. Intuitively, if a graph $G$ looks like a weighted graph $R$, then this lemma implies that the $K_s$-density of $G$ is approximately the $K_s$-density of $R$.

\begin{lemma}[Graph Counting Lemma, {\cite{Duke1995Fast}}]\label{lem:graph-counting}
Let $\eps>0$, and let $G$ be an $s$-partite graph with $V(G)=\bigcup_{i=1}^s V_s$. Suppose that the pair $(V_i,V_j)$ is $\eps$-regular for all $1\leq i< j\leq s$. Then 
$$\left|N(K_s,G)/\left(\prod_{i=1}^s|V_i|\right)-\prod_{1\leq i<j\leq s}d(V_i,V_j)\right|\leq \sqrt{\eps s^3},$$
where $N(K_s, G)$ is the number of copies of $K_s$ in $G$.
\end{lemma}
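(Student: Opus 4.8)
The plan is to prove the estimate by a telescoping (hybrid) argument over the $\binom{s}{2}$ edges of $K_s$; this avoids the regularity-parameter degradation that plagues the more familiar ``peel off one part and induct'' proof. The first step is to reformulate both quantities probabilistically: sampling $v_i\in V_i$ independently and uniformly, one has $N(K_s,G)/\prod_i|V_i| = \Pr[\{v_1,\dots,v_s\}\text{ spans }K_s] = \E\big[\prod_{ij\in E(K_s)}\mathbbm{1}[v_iv_j\in E(G)]\big]$ (using that $G$ is $s$-partite, so each $K_s$ is counted exactly once), while $\prod_{1\le i<j\le s}d(V_i,V_j) = \E\big[\prod_{ij\in E(K_s)}d(V_i,V_j)\big]$, the latter being the expectation of a constant.

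The second step is to interpolate. Fix an ordering $e_1,\dots,e_m$ of $E(K_s)$, where $m=\binom{s}{2}$, write $d_{e_k}=d(V_i,V_j)$ for $e_k=\{i,j\}$, and define the hybrids $X_\ell = \E\big[\prod_{k\le\ell}\mathbbm{1}[e_k\text{ present}]\cdot\prod_{k>\ell}d_{e_k}\big]$ for $\ell=0,\dots,m$, so that $X_0=\prod_{i<j}d(V_i,V_j)$ and $X_m=N(K_s,G)/\prod_i|V_i|$. It then suffices to show $|X_\ell-X_{\ell-1}|\le\eps$ for every $\ell$: summing gives $\big|N(K_s,G)/\prod_i|V_i|-\prod_{i<j}d(V_i,V_j)\big|\le\binom{s}{2}\eps$, and a trivial case split converts this to the stated bound (if $\eps\le 1/s$ then $\binom{s}{2}\eps\le s^2\eps\le\sqrt{\eps s^3}$; if $\eps>1/s$ then $\sqrt{\eps s^3}>s\ge 1$ already bounds the difference of two quantities in $[0,1]$).

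The third step, and the technical heart, is the single-increment bound. Writing $e_\ell=\{a,b\}$, we have $X_\ell-X_{\ell-1}=\E\big[(\mathbbm{1}[v_av_b\in E]-d(V_a,V_b))\cdot\prod_{k<\ell}\mathbbm{1}[e_k\text{ present}]\cdot\prod_{k>\ell}d_{e_k}\big]$. I would condition on the vertices $v_c$ with $c\notin\{a,b\}$: then $\prod_{k>\ell}d_{e_k}$ is a constant in $[0,1]$, the indicators $\mathbbm{1}[e_k\text{ present}]$ with $k<\ell$ and $e_k$ avoiding $\{a,b\}$ become a constant in $\{0,1\}$, and the remaining ones involving $a$ (resp.\ $b$) collapse to $\mathbbm{1}[v_a\in S_a]$ (resp.\ $\mathbbm{1}[v_b\in S_b]$), where $S_a\subseteq V_a$ and $S_b\subseteq V_b$ are intersections of already-revealed neighborhoods and depend only on the conditioned vertices. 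Hence the conditional expectation equals $\pm\tfrac{1}{|V_a||V_b|}\big(e_G(S_a,S_b)-d(V_a,V_b)|S_a||S_b|\big)$ times a $\{0,1\}$-constant, and $\eps$-regularity of $(V_a,V_b)$ bounds this by $\eps$: when $|S_a|\ge\eps|V_a|$ and $|S_b|\ge\eps|V_b|$ it is exactly the regularity condition $|d(S_a,S_b)-d(V_a,V_b)|<\eps$, and otherwise both $e_G(S_a,S_b)$ and $d(V_a,V_b)|S_a||S_b|$ are already below $\eps|V_a||V_b|$. Averaging over the conditioning yields $|X_\ell-X_{\ell-1}|\le\eps$.

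The argument is short, and the only point that needs genuine care is the factorization in the increment bound — verifying that, after conditioning on $\{v_c:c\ne a,b\}$, the residual dependence on $(v_a,v_b)$ truly separates as $\mathbbm{1}[v_a\in S_a]\,\mathbbm{1}[v_b\in S_b]$ with $S_a,S_b$ not depending on $v_a,v_b$ — together with the routine observation that one only ever invokes $\eps$-regularity in its ``edge-count'' form $|e_G(X',Y')-d(X,Y)|X'||Y'||\le\eps|X||Y|$, which absorbs the degenerate small-subset cases automatically. I would remark that the alternative inductive proof (for each $v\in V_s$, count the $K_{s-1}$'s inside $G[N(v)\cap V_1,\dots,N(v)\cap V_{s-1}]$ and restore regularity of the restricted pairs via the slicing lemma) is also viable but less clean here, since slicing worsens the regularity parameter at each of the $s$ levels, forcing $\eps$ to be taken extremely small relative to $s$ and to the pairwise densities in order to close the recursion with the claimed bound.
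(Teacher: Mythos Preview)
The paper does not prove this lemma; it is quoted as a known result from \cite{Duke1995Fast} and used as a black box. Your telescoping (Lindeberg-style) argument is a correct and standard proof of the counting lemma, and the final case split cleanly converts the $\binom{s}{2}\eps$ bound into the stated $\sqrt{\eps s^3}$.

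One small imprecision worth fixing: after conditioning on $\{v_c:c\neq a,b\}$, the multiplicative constant is not a ``$\{0,1\}$-constant'' but a number in $[0,1]$, since it includes the factor $\prod_{k>\ell}d_{e_k}$ in addition to the $\{0,1\}$ indicators for edges $e_k$ with $k<\ell$ avoiding $\{a,b\}$. This does not affect the bound (you only need the factor to lie in $[0,1]$), and the ``$\pm$'' is unnecessary. Otherwise the factorization step and the edge-count form of regularity, $|e_G(S_a,S_b)-d(V_a,V_b)|S_a||S_b||\le\eps|V_a||V_b|$ (valid in both the large-subset and small-subset regimes), are exactly right.
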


We now prove the upper bound of \cref{thm:reduction-to-R}.

\begin{theorem}\label{reduction-UB}
Let $s,t$ be integers with $2\le s< t$. For any $\del\in(0,1)$ there exists $\del'\in(0,1)$ such that the following holds.
Suppose $G$ is a $K_t$-free graph with $\al(G)\leq \del'|V(G)|$. Then there is a $\C K_t$-free weighted graph $R$ such that $d_{K_s}(G)\leq d_{K_s}(R)+4s^2\del$.
\end{theorem}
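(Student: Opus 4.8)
The plan is to apply the regularity lemma to $G$, extract a weighted graph $R$ from the reduced cluster graph, and verify on one hand that $R$ inherits an approximate version of $G$'s $K_s$-density, and on the other hand that $R$ must be $\C K_t$-free because a forbidden pair $(S_1,S_2)\in\C K_t$ inside $R$ would be liftable to a copy of $K_t$ in $G$ — contradicting $K_t$-freeness. First I would fix $\del\in(0,1)$ and choose $\eps>0$ small enough (in particular $\eps\ll \del^2/s^3$ and $\eps\ll 1/t$) and $M_0$ large (say $M_0\gg 1/\del$), then set $\del'$ to be something like $\eps/(2M(\eps,M_0))$ so that the exceptional part and the lower-order terms are all negligible. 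Applying Theorem~\ref{thm: regularity} gives an $\eps$-regular partition $V(G)=V_1\cup\cdots\cup V_r\cup V_{r+1}$ with $M_0\le r\le M$. I then define the weighted graph $R$ on vertex set $[r]$ with uniform vertex weights $w(i)=1/r$ and edge weights $w(i,j)=d(V_i,V_j)$ whenever $(V_i,V_j)$ is an $\eps$-regular pair, and $w(i,j)=0$ otherwise (also $w(i,i)=0$).

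For the density comparison, I would count copies of $K_s$ in $G$ by the $s$-tuples of clusters they hit. Copies of $K_s$ using a vertex of $V_{r+1}$, or two vertices in the same cluster $V_i$, or an edge across a non-$\eps$-regular pair, together contribute only $O(\eps\, n^s + \binom{r}{1}(n/r)^2 n^{s-2} + \eps r^2 (n/r)^2 n^{s-2}) = O(\eps)\binom{n}{s}$ — by the choice of $\eps$ this is at most $\del\binom{n}{s}$, with room to spare (the factor $4s^2$ in the statement is generous). For every remaining $s$-tuple of distinct clusters $(V_{i_1},\dots,V_{i_s})$ with all pairs $\eps$-regular, Lemma~\ref{lem:graph-counting} gives that the number of copies of $K_s$ in $G[V_{i_1},\dots,V_{i_s}]$ is within $\sqrt{\eps s^3}\prod_k |V_{i_k}|$ of $\bigl(\prod_{k<l} d(V_{i_k},V_{i_l})\bigr)\prod_k|V_{i_k}|$. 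Averaging over ordered $s$-tuples (and accounting for the $1/s!$ when passing between copies and ordered tuples, which is harmless since the same factor appears in $d_{K_s}(R)$), the main term sums to exactly $d_{K_s}(R)$, and the error sums to at most $\sqrt{\eps s^3}\le \del$. Collecting terms yields $d_{K_s}(G)\le d_{K_s}(R) + O(\del) \le d_{K_s}(R)+4s^2\del$.

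The main obstacle — and the genuinely interesting step — is showing $R$ is $\C K_t$-free, which is where the hypotheses $\al(G)\le\del'n$ and $K_t$-freeness are used together. Suppose toward contradiction that $R$ contains a forbidden pair $(S_1,S_2)$ with $S_2\subseteq S_1\subseteq[r]$, $s_1=|S_1|$, $s_2=|S_2|$, $s_1+s_2=t$, where $S_1$ spans a clique in $R_{>0}$ and $S_2$ spans a clique in $R_{>1/2}$. I would build a copy of $K_t$ in $G$ greedily: embed one vertex into each cluster $V_i$ with $i\in S_1$, and a \emph{second} vertex into each cluster $V_i$ with $i\in S_2$. The edges between distinct chosen clusters all lie across $\eps$-regular pairs of positive density (positive for pairs inside $S_1$, density $>1/2$ for pairs inside $S_2$), so a standard iterative argument using the defining property of $\eps$-regularity — at each step we have restricted to a subset of each relevant cluster that is still of size $\ge \eps|V_i|$, and $\eps$-regularity guarantees the next common-neighborhood is still a positive fraction — lets us pick the $t$ vertices one at a time, provided $\eps$ is small relative to $1/t$. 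The one subtlety is the two vertices chosen \emph{within} a cluster $V_i$, $i\in S_2$: $G[V_i]$ could a priori be empty. Here is exactly where $\al(G)\le\del'n$ enters: since $|V_i|\ge (1-\eps)n/r \ge (1-\eps)n/M > \del' n$, the set $V_i$ (even after the bounded number of regularity restrictions, which only shrink it by a factor $\eps^{O(t)}$, still of size $\gg \del' n$) cannot be independent, so it contains an edge; choosing both endpoints inside the appropriately restricted subset of $V_i$ supplies the needed $K_2$. This produces $K_{s_1}$ worth of "first" vertices fully joined, plus an extra vertex in each $S_2$-cluster joined to everything, i.e.\ a $K_{s_1+s_2}=K_t$ in $G$, the desired contradiction. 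Hence $R$ is $\C K_t$-free, completing the proof.
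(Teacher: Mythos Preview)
Your overall architecture matches the paper's: apply the regularity lemma, build a weighted reduced graph $R$, compare $K_s$-densities via the counting lemma, and argue that a weighted $t$-clique in $R$ lifts to a $K_t$ in $G$. But there is a genuine gap in your definition of $R$ that breaks the $\C K_t$-freeness step.

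You set $w(i,j)=d(V_i,V_j)$ for every $\eps$-regular pair. With this choice, ``$w(i,j)>0$'' includes pairs whose density is positive but arbitrarily small, say $d(V_i,V_j)=\eps/10$. For such a pair, $\eps$-regularity only tells you that large subsets have density within $\eps$ of $\eps/10$, which is consistent with density $0$; the greedy embedding cannot guarantee that the next common neighbourhood is a positive proportion of the target cluster. The same problem occurs at the $1/2$ threshold: ``$w(i,j)>1/2$'' allows $d(V_i,V_j)=1/2+\eps/10$, and after the $\eps$-regularity loss the typical neighbourhood in $V_j$ of a vertex in $V_i$ is only guaranteed to have size $>(1/2-\eps)|V_j|$, so the pigeonhole argument giving a common neighbourhood of \emph{two} such vertices fails. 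Thus your $R$ need not be $\C K_t$-free: a pair $(S_1,S_2)$ could exist in $R$ using pairs of density $\eps/10$ or $1/2+\eps/10$, yet no $K_t$ can be extracted from $G$.

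The paper's fix is to shift the weights: set $w(i,j)=\max\{d(V_i,V_j)-\del,0\}$ for $\eps$-regular pairs (and $0$ otherwise). Then $w(i,j)>0$ forces $d(V_i,V_j)>\del\gg\eps$, and $w(i,j)>1/2$ forces $d(V_i,V_j)>1/2+\del$; in both cases the iterative embedding goes through exactly as you sketched, with the restricted sets shrinking by a factor $\ge\del-\eps$ at each step (and the independence bound $\al(G)\le\del' n$ furnishing the edge inside each $S_2$-cluster, as you correctly identified). The price of the $\del$-shift is an extra $\binom{s}{2}\del$ term in the density comparison, which is precisely why the statement allows $4s^2\del$ rather than $O(\eps)$ error.
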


\begin{proof}
Choose $\eps<\frac 12$ small enough such that $(\del-\eps)^{t-1}>(t+1)\eps$ and let $\del'=\eps/M$, where $M=M(\eps,\frac 1\eps)$ is the constant guaranteed by the regularity lemma (\cref{thm: regularity}). Suppose $G$ is a $K_t$-free graph on $N$ vertices with $\al(G)\leq\del'N$.

Apply Theorem \ref{thm: regularity} with this value of $\eps$ to $G$. This yields an $\eps$-regular partition $V(G)=V_1\cup\cdots\cup V_r\cup V_{r+1}$ with $\frac 1\eps\leq r\leq M$. We show that a substructure similar to a weighted $t$-clique is forbidden among the edge densities $d(V_i,V_j)$.

\begin{claim}\label{reduction-UB-claim}
Suppose $S_2\subseteq S_1\subseteq[r]$ are sets of indices such that 
\begin{enumerate}[label=(\roman*)]
	\item For any distinct $i,j\in S_1$, the pair $(V_i,V_j)$ is $\eps$-regular with density $d(V_i,V_j)>\del$; and
	\item For any distinct $i,j\in S_2$, the pair $(V_i,V_j)$ has density $d(V_i,V_j)>\frac 12+\del$.
\end{enumerate}
Then $G$ contains a clique of order $|S_1|+|S_2|$. In particular, $|S_1|+|S_2|<t$.
\end{claim}

\begin{poc}
Order the elements of $S_1$ as $a_1,\ldots,a_\ell$ with the elements of $S_1\setminus S_2$ listed first. Set $k=|S_1|-|S_2|$. By \cref{lem:graph-counting}, there exists a clique $S$ of order $\ell$ in $G$ such that $|S\cap V_{a_i}|=1$ for each $i\in [\ell]$; as $G$ is $K_t$-free, it follows that $\ell<t$. Our proof follows an $\ell$-step process, where the $i$th step chooses one (if $i\leq k$) or two (if $i>k$) vertices from $V_{a_i}$ that are adjacent to all previously chosen vertices. For $0\leq i,j\leq \ell$, write $W^{(i)}$ for the common neighborhood of those vertices chosen in the first $i$ steps, and let $W_j^{(i)}=V_{a_j}\cap W^{(i)}$. In particular, $W^{(0)}=V(G)$. We will choose $2\ell-k$ vertices such that $|W_j^{(i)}|\geq(\del-\eps)|W_j^{(i-1)}|\geq(\del-\eps)^i|V_{a_j}|$ for all $0\leq i<j\leq \ell$.

On the $i$th step with $1\leq i\leq k$, choose one vertex $v_i\in W_i^{(i-1)}$ such that $W_j^{(i)}:=N(v_i)\cap W_j^{(i-1)}$ has cardinality at least $(\del-\eps)|W_j^{(i-1)}|$ for each $j>i$. To show that such a vertex $v_i$ exists, consider the sets
\[X_j=\left\{v\in W_i^{(i-1)}:|N(v)\cap W_j^{(i-1)}|<(\del-\eps)|W_j^{(i-1)}|\right\}\] 
for each $j>i$. Observe that $d\left(X_j,W_j^{(i-1)}\right)<\del-\eps<d(V_{a_i},V_{a_j})-\eps$ by construction, and that $|W_j^{(i-1)}|\geq(\del-\eps)^{i-1}|V_{a_j}|\geq(\del-\eps)^t|V_{a_j}|\geq\eps|V_{a_j}|$. Because the pair $(V_{a_i},V_{a_j})$ is $\eps$-regular, it follows that $|X_j|<\eps|V_{a_i}|$. Thus,
\[
\left|W_i^{(i-1)}-\bigcup_{j=i+1}^\ell X_j\right| > (\del-\eps)^{i-1}|V_{a_i}|-(\ell-i)\eps|V_{a_i}|\geq ((\del-\eps)^{t-1}-(t-1)\eps)|V_{a_i}|>0.
\]
It follows that there is a vertex $v_i\in W_i^{(i-1)}$ such that $|W_j^{(i)}|=|N(v_i)\cap W_j^{(i-1)}|\geq (\del-\eps)|W_j^{(i-1)}|$ for each $j>i$.

On the $i$th step with $k<i\leq\ell$, choose two adjacent vertices $v_i,v'_i\in W_i^{(i-1)}$ such that $W_j^{(i)}:=N(v_i)\cap N(v'_i)\cap W_j^{(i-1)}$ has cardinality at least $2(\del-\eps)|W_j^{(i-1)}|$ for all $j>i$. To verify that such vertices exist, set
\[X_j=\left\{v\in W_i^{(i-1)}:|N(v)\cap W_j^{(i-1)}|<\left(\frac 12+\del-\eps\right)|W_j^{(i-1)}|\right\}\]
for each $j>i$. The argument from the prior paragraph shows that
\[
\left|W_i^{(i-1)}-\bigcup_{j=i+1}^\ell X_j\right| > \left((\del-\eps)^{t-1}-(t-1)\eps\right)|V_{a_i}|>2\eps|V_{a_i}|.
\]
Noting that $|V_{a_i}|=(N-|V_{r+1}|)/r>N/2r$, we have
\[
\left|W_i^{(i-1)}-\bigcup_{j=i+1}^\ell X_j\right| > 2\eps|V_{a_i}|
>\frac{\eps N}{r}\geq\del' N\geq \al(G).
\]
It follows that there are adjacent vertices $v_i,v'_i\in W_i^{(i-1)}$ such that $N(v_i)\cap W_j^{(i-1)}$ and $N(v'_i)\cap W_j^{(i-1)}$ have cardinality at least $(1/2+\del-\eps)|W_j^{(i-1)}|$ for each $j>i$. By the pigeonhole principle,
\[
|W_j^{(i)}|=|N(v_i)\cap N(v'_i)\cap W_j^{(i-1)}|\geq 2(\del-\eps)|W_j^{(i-1)}|
\]
for each $j>i$, as desired.

After $\ell$ steps, this process results in $k+2(\ell-k)=|S_1|+|S_2|$ vertices $v_1,\ldots,v_k,v_{k+1},v'_{k+1},\ldots,v_\ell,v'_\ell$ that form a clique in $G$. It follows that $|S_1|+|S_2|<t$, because $G$ is $K_t$-free.
\end{poc}

Let $R$ be the weighted graph on $[r]$ with vertex weights $w(i)=\frac 1r$ for all $i\in[r]$ and edge weights
\begin{equation*}
   w(i,j)=
    \begin{cases}
        \max\{d(V_i,V_j)-\del,0\},&\text{if $i\neq j$ and $(V_i,V_j)$ is $\eps$-regular}, \\ 
     0,&\text{if $i=j$ or $(V_i,V_j)$ is not $\eps$-regular}.
    \end{cases}
\end{equation*}
for all $i,j\in[r]$.
We observe that $R$ is $\C K_t$-free as a direct consequence of Claim \ref{reduction-UB-claim}.

To conclude the proof, we bound the $K_s$-density of $G$. We have
\[d_{K_s}(G)=\frac 1{N^s}\left(\sum_{a_1,\ldots,a_s\in[r+1]}\#\{(v_1,\ldots,v_s)\in V_{a_1}\times\cdots\times V_{a_s} \text{ that form a $K_s$ in $G$}\}\right).\]
If $a_1,\ldots,a_s$ are distinct elements of $[r]$ and each pair $(V_{a_i},V_{a_j})$ is $\eps$-regular, then we may simplify the summand using the graph-counting lemma. Indeed, \cref{lem:graph-counting} implies that the number of copies of $K_s$ in $V_{a_1}\times\cdots\times V_{a_s}$ is at most
\[
\left(\prod_{i=1}^s|V_{a_i}|\right)\left(\prod_{1\leq i<j\leq s}d(V_{a_i},V_{a_j})+\sqrt{\eps s^3}\right)\leq\frac{N^s}{r^s}\left(\prod_{1\leq i<j\leq s}d(V_{a_i},V_{a_j})+\sqrt{\eps s^3}\right)
\]
in this case.
It remains to bound the contribution from terms where some $a_i$ is $r+1$, the $a_i$ are not distinct, or some pair $(V_{a_i},V_{a_j})$ is not $\eps$-regular.

The terms where at least one index $a_i$ equals $r+1$ contribute at most
\[
\frac s{N^s}|V_{r+1}|N^{s-1}\leq \eps s
\]
to the sum.
The terms where $a_1,\ldots,a_s$ are not all distinct contribute at most
\[
\frac 1{N^s}\sum_{i=1}^r\binom s2|V_i|^2N^{s-2}\leq r\times\binom s2\times\frac 1{r^2}\leq \eps\binom s2
\]
because $r\geq 1/\eps$.
The terms where a pair $(V_{a_i},V_{a_j})$ is not $\eps$-regular contribute at most
\[
\frac 1{N^s}\sum_{\substack{1\leq i<j\leq r,\\(V_i,V_j)\text{ not}\\\text{$\eps$-regular}}}
	s(s-1)|V_i||V_j|N^{s-2}
<\frac{s(s-1)}{r^2}\times(\text{\# $\eps$-irregular pairs})
\leq \eps s(s-1).
\]

Combining these estimates, we have
\begin{align*}
d_{K_s}(G)&\leq \eps\left(s+3\binom s2\right)+r^{-s}\sum_{\substack{a_1,\ldots,a_s\in[r]\\\text{distinct}}}\left(\prod_{1\leq i<j\leq s}d(V_{a_i},V_{a_j})+\sqrt{\eps s^3}\right)
\\&\leq \eps\left(s+3\binom s2\right)+\sqrt{\eps s^3}+r^{-s}\sum_{\substack{a_1,\ldots,a_s\in[r]\\\text{distinct}}}\left(\prod_{1\leq i<j\leq s}d(V_{a_i},V_{a_j})\right).
\end{align*}
To compare this to $d_{K_s}(R)$, we observe the following inequality. If real numbers $x_1,\ldots,x_k,y_1,\ldots,y_k\in[0,1]$ satisfy $x_i\leq y_i+\del$ for each $i\in [k]$ then
\[
    \prod_{i=1}^kx_i-\prod_{i=1}^ky_i
    =(x_1-y_1)x_2\cdots x_k+y_1(x_2-y_2)x_3\cdots x_k + \cdots + y_1\cdots y_{k-1}(x_k-y_k)\leq k\del.
\]
Applying this inequality with $k=\binom s2$ to the real numbers $d(V_{a_i},V_{a_j})$ and $w(a_i,a_j)$, it follows that
\begin{align*}
d_{K_s}(G)
&\leq \eps\left(s+3\binom s2\right)+\sqrt{\eps s^3}+r^{-s}\sum_{\substack{a_1,\ldots,a_s\in[r]\\\text{distinct}}}\left(\prod_{1\leq i<j\leq s}w(a_i,a_j)+\del \binom s2\right)
\\&\leq2\eps s^2+\sqrt{\eps s^3}+\del \binom s2+d_{K_s}(R).
\end{align*}
Because $\eps<(\del-\eps)^{t-1}<\del^2$, it follows that $d_{K_s}(G)<d_{K_s}(R)+4s^2\del$, as desired.
\end{proof}

\subsection{Lower bound}

The lower bound construction for \cref{thm:reduction-to-R} hinges on a construction of Bollob\'as and Erd\H os \cite{Bollobas1976ramseyturan} which achieves the tight lower bound $\varrho_2(K_4)=\frac{1}{4}$. We briefly describe this construction, following the notation used in \cite{fox2015cw}.

Fix $0<\eps<1$ and an integer $h\geq 16$, and set $\mu=\frac{\eps}{\sqrt h}$. Let $X$ and $Y$ be sets of points on the unit sphere $\sph^{h-1}\subset\mathbb R^h$. The Bollob\'as--Erd\H os graph $\BE(X,Y)$ is a graph on vertex set $X\cup Y$ constructed as follows.
\begin{itemize}
    \item[(a)] Join $\textbf{\textit{x}} \in X$ to $\textbf{\textit{y}} \in Y$ if $\|\textbf{\textit{x}}-\textbf{\textit{y}}\|<\sqrt{2}-\mu$.
    \item[(b)] Join $\textbf{\textit{x}},\textbf{\textit{x}}'\in X$ if $\|\textbf{\textit{x}}-\textbf{\textit{x}}'\|>2-\mu$. Similarly, join $\textbf{\textit{y}},\textbf{\textit{y}}'\in Y$ if $\|\textbf{\textit{y}}-\textbf{\textit{y}}'\|>2-\mu$.
\end{itemize}
Bollob\'as and Erd\H os showed that this graph is $K_4$-free and that, if $\eps$ and $h$ are tuned appropriately and $X$ and $Y$ are uniformly placed on $\sph^{h-1}$, it has independence number $o(|X|+|Y|)$ and edge density $\frac 14-o(1)$.
Fox, Loh and Zhao \cite{fox2015cw} analyzed this construction in further detail, providing precise quantitative results on the independence number and minimum degree.
We need some results from their work.

\begin{lemma}[{\cite{fox2015cw}}]\label{lem:BE}
Let $0<\eps<1$ and let $h\geq 16$ be an integer. Set $\mu=\eps/\sqrt h$.
\begin{enumerate}[label=(\arabic*)]
\item If points $\textbf{\textit{x}},\textbf{\textit{y}}\in\sph^h$ are chosen independently and uniformly at random then $\Pr\left[\|\textbf{\textit{x}}-\textbf{\textit{y}}\|<\sqrt 2-\mu\right]\geq\frac 12-\sqrt 2\eps$. \label{BE-density}
\end{enumerate}
Now, fix $X,Y\subseteq\sph^{h-1}$, and let $G=\BE(X,Y)$ be the graph defined above with parameters $\eps$ and $h$.
\begin{enumerate}[label=(\arabic*)]
\setcounter{enumi}{1}
	\item The induced subgraphs $G[X]$ and $G[Y]$ are each $K_3$-free.\label{BE-K3free}
	\item $G$ is $K_4$-free. \label{BE-K4free}
	\item If $n$ is sufficiently large in terms of $\eps, h$, there is a choice $X\subset\sph^h$ of size $n$ such that the induced subgraph $G[X]$ has independence number at most $e^{-\eps\sqrt h/4}n$. \label{BE-alpha}
\end{enumerate}
\end{lemma}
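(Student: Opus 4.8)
These four facts are quoted from \cite{fox2015cw}; I indicate how one can obtain each. For unit vectors $\textbf{\textit{u}},\textbf{\textit{v}}$ one has $\|\textbf{\textit{u}}-\textbf{\textit{v}}\|^2=2-2\langle\textbf{\textit{u}},\textbf{\textit{v}}\rangle$, so the two adjacency rules of $\BE(X,Y)$ translate into inner-product bounds: a cross edge $\textbf{\textit{x}}\textbf{\textit{y}}$ means $\langle\textbf{\textit{x}},\textbf{\textit{y}}\rangle>\sqrt2\mu-\tfrac{\mu^2}2$, while a within edge $\textbf{\textit{x}}\textbf{\textit{x}}'$ or $\textbf{\textit{y}}\textbf{\textit{y}}'$ means $\langle\textbf{\textit{x}},\textbf{\textit{x}}'\rangle<-1+2\mu-\tfrac{\mu^2}2$. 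Also $\mu=\eps/\sqrt h<\tfrac14$ since $h\ge16$ and $\eps<1$. For \ref{BE-density} I would reduce to a one-dimensional estimate: by rotational symmetry we may fix $\textbf{\textit{x}}$, so $\langle\textbf{\textit{x}},\textbf{\textit{y}}\rangle$ is distributed as one coordinate of a uniformly random point of the sphere, with density $c(1-u^2)^{(h-2)/2}$ on $(-1,1)$ where $c=\Gamma(\tfrac{h+1}2)/(\sqrt\pi\,\Gamma(\tfrac h2))\le\sqrt h$. This density is symmetric about $0$ and maximised at $0$, and the threshold $\sqrt2\mu-\tfrac{\mu^2}2$ is positive, so $\Pr[\langle\textbf{\textit{x}},\textbf{\textit{y}}\rangle>\sqrt2\mu-\tfrac{\mu^2}2]\ge\tfrac12-\sqrt2\mu\,c\ge\tfrac12-\sqrt2\eps$, using $\mu c\le\eps$.

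Facts \ref{BE-K3free} and \ref{BE-K4free} are short sum-of-squares computations. If $\textbf{\textit{x}}_1,\textbf{\textit{x}}_2,\textbf{\textit{x}}_3\in X$ were pairwise adjacent, then $0\le\|\textbf{\textit{x}}_1+\textbf{\textit{x}}_2+\textbf{\textit{x}}_3\|^2=3+2\sum_{i<j}\langle\textbf{\textit{x}}_i,\textbf{\textit{x}}_j\rangle<-3+12\mu<0$, a contradiction, which with the symmetric statement for $Y$ gives \ref{BE-K3free}. For \ref{BE-K4free}, fact \ref{BE-K3free} forces a copy of $K_4$ in $G$ to consist of exactly two vertices $\textbf{\textit{x}},\textbf{\textit{x}}'\in X$ and two $\textbf{\textit{y}},\textbf{\textit{y}}'\in Y$, with all six pairs adjacent; then the identity
\[
\|\textbf{\textit{x}}-\textbf{\textit{y}}\|^2+\|\textbf{\textit{x}}-\textbf{\textit{y}}'\|^2+\|\textbf{\textit{x}}'-\textbf{\textit{y}}\|^2+\|\textbf{\textit{x}}'-\textbf{\textit{y}}'\|^2=\|\textbf{\textit{x}}-\textbf{\textit{x}}'\|^2+\|\textbf{\textit{y}}-\textbf{\textit{y}}'\|^2+\|(\textbf{\textit{x}}+\textbf{\textit{x}}')-(\textbf{\textit{y}}+\textbf{\textit{y}}')\|^2
\]
forces $4(\sqrt2-\mu)^2>2(2-\mu)^2$, which simplifies to $\mu>4(\sqrt2-1)>1$, contradicting $\mu<1$.

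The substantial point is \ref{BE-alpha}. An independent set $S$ in $G[X]$ is a set of points on the sphere with no almost-antipodal pair, i.e.\ of angular diameter at most $\pi-\rho$ where $\rho=2\sqrt\mu\,(1-o(1))$. The plan has two stages. \emph{Continuous bound:} if a measurable set $A$ on the sphere has angular diameter at most $\pi-\rho'$, then for each $\textbf{p}\in A$ the open cap of angular radius $\rho'$ about $-\textbf{p}$ misses $A$, so $-A$ is disjoint from the open $\rho'$-neighbourhood $N_{\rho'}(A)$; hence $\bar\mu(A)+\bar\mu(N_{\rho'}(A))\le1$, and feeding this into L\'evy's spherical isoperimetric inequality (which says $\bar\mu(N_{\rho'}(A))$ is minimised when $A$ is a cap) forces $\bar\mu(A)\le e^{-(\rho')^2h/8+o(\sqrt h)}$; taking $\rho'\approx\rho$ gives $\bar\mu(A)\le e^{-\eps\sqrt h/2+o(\sqrt h)}$. \emph{Transfer to a finite set:} take $X$ to be $n$ points drawn i.i.d.\ uniformly; with high probability $X$ is well-distributed at all scales (a standard VC/discrepancy estimate for spherical caps), so for a thickening $N_r(S)$ with $r$ a small multiple of $\sqrt\mu$ chosen so that its diameter is still at most $\pi-\rho'$ with $(\rho')^2h\ge2\eps\sqrt h$, we get $|S|\le|X\cap N_r(S)|\le(1+o(1))\,\bar\mu(N_r(S))\,n\le e^{-\eps\sqrt h/4}n$.

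I expect \ref{BE-alpha} to be the only real obstacle — in particular, transferring the measure bound to a finite configuration while keeping the exact constant $\tfrac14$ in the exponent; by contrast, facts \ref{BE-density}--\ref{BE-K4free} are routine inner-product manipulations. Alternatively one may simply cite \cite{fox2015cw}, where all four items are established.
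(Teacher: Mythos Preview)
The paper does not prove this lemma at all: it is stated with the citation \cite{fox2015cw} and used as a black box, so there is no ``paper's own proof'' to compare against. Your concluding sentence --- that one may simply cite \cite{fox2015cw} --- is precisely what the paper does.

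That said, your sketches are sound and match the standard arguments. The inner-product reformulation and the density estimate in \ref{BE-density} via the marginal law of one coordinate are correct (your bound $c\le\sqrt h$ follows from Gautschi-type inequalities for the Gamma ratio). The sum-of-squares contradictions for \ref{BE-K3free} and \ref{BE-K4free} are exactly the classical Bollob\'as--Erd\H{o}s computations; your parallelogram-type identity for \ref{BE-K4free} checks out and indeed forces $\mu>4(\sqrt2-1)$. For \ref{BE-alpha}, your two-stage plan (L\'evy isoperimetry to bound the normalised measure of a diameter-$(\pi-\rho)$ set by roughly $e^{-\eps\sqrt h/2}$, then a discrepancy transfer to a random $n$-point set) is the right outline and is essentially how \cite{fox2015cw} proceeds; you are also right that the only genuine work is in the transfer step and in tracking the constant $1/4$. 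Since the paper treats all of this as a citation, your write-up already exceeds what is required.
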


Using these preliminaries, we prove the lower bound of \cref{thm:reduction-to-R}. It follows from \cref{reduction-LB} by choosing parameters $(\eps,h)$ such that $\eps\to 0$ and $\eps\sqrt h\to\infty$.

\begin{theorem}\label{reduction-LB}
Suppose $R$ is a weighted graph that is $\C K_t$-free for some integer $t\geq 3$. Fix $\eps>0$ and an integer $h\geq 16$. For all sufficiently large $N$, there is a $K_t$-free graph $G$ on $N$ vertices with independence number $\al(G)\leq 3e^{-\eps\sqrt h/4}N$ and $K_s$-density $d_{K_s}(G)\geq(1-2\sqrt 2s^2\eps)d_{K_s}(R)$.
\end{theorem}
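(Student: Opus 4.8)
The plan is to build $G$ by a ``blow-up'' of the weighted graph $R$, replacing each vertex $i$ of $R$ by a cluster $U_i$ of size proportional to $w(i)$, and then deciding the bipartite graphs $G[U_i,U_j]$ according to the edge weight $w(i,j)$ using three regimes. If $w(i,j)=0$, put no edges between $U_i$ and $U_j$. If $0<w(i,j)\le \tfrac12$, put a random bipartite graph of density $w(i,j)$ between $U_i$ and $U_j$ (this will be essentially triangle-free on any three clusters because random bipartite graphs of bounded density contain few triangles, but more importantly these edges behave like ``type (a)'' edges that cannot all be pushed to cliques). If $w(i,j)>\tfrac12$, we want the pair $(U_i,U_j)$ to be dense \emph{and} to support the Bollob\'as--Erd\H os structure: the natural choice is to partition each $U_i$ further and use a copy of a $\BE$-type graph so that $G[U_i,U_j]$ has density $\ge\tfrac12-o(1)$ but is still locally $K_4$-free in the relevant sense. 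The key design constraint is that a clique in $G$ meeting clusters $S_1\subseteq V(R)$, and using \emph{two} vertices from exactly the clusters in some $S_2\subseteq S_1$, must force $(S_1,S_2)\in\C K_t$; since $R$ is $\C K_t$-free this bounds the clique number below $t$.

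Concretely, I would do the following. First, fix the $\BE$ parameters $(\eps,h)$ as given and let $\mu=\eps/\sqrt h$. For each vertex $i\in V(R)$ take $U_i$ of size $\lfloor w(i)N\rfloor$ (adjusting to total $N$), and inside $U_i$ place $|U_i|$ points independently and uniformly on the sphere $\sph^{h-1}$; call this point set $P_i$, chosen (via Lemma~\ref{lem:BE}\ref{BE-alpha}) so that the $\BE$-graph on $P_i$ has independence number $\le e^{-\eps\sqrt h/4}|U_i|$. Now define the edges of $G$:
\begin{itemize}
\item[(i)] for $w(i,j)=0$: no edges between $U_i,U_j$;
\item[(ii)] for $0<w(i,j)\le\tfrac12$: independently sample each pair with probability $2w(i,j)\cdot[\,\|\V x-\V y\|<\sqrt2-\mu\,]$--- i.e.\ first restrict to the ``BE-adjacent'' pairs (which by Lemma~\ref{lem:BE}\ref{BE-density} is a $(\tfrac12-O(\eps))$-fraction) and then keep each such pair independently with probability $2w(i,j)\le 1$, so the resulting density is $w(i,j)(1-O(\eps))$;
\item[(iii)] for $w(i,j)>\tfrac12$: join $\V x\in P_i$ to $\V y\in P_j$ exactly when $\|\V x-\V y\|<\sqrt2-\mu$ (the full BE rule), giving density $\ge\tfrac12-\sqrt2\eps$ by Lemma~\ref{lem:BE}\ref{BE-density}; additionally join $\V x,\V x'$ within the same $U_i$ when $\|\V x-\V x'\|>2-\mu$, but \emph{only} insofar as needed --- here one must be careful that adding within-cluster edges does not create $K_t$; the cleanest route is to add the within-$U_i$ ``far-apart'' edges and rely on Lemma~\ref{lem:BE}\ref{BE-K3free},\ref{BE-K4free}.
\end{itemize}

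The heart of the argument is the $K_t$-freeness. Suppose $Q$ is a clique in $G$. Let $S_1=\{i: Q\cap U_i\ne\emptyset\}$; then $S_1$ induces a clique in $R_{>0}$ (we only placed edges between $U_i,U_j$ when $w(i,j)>0$, and $Q\cap U_i$ has size $\le 2$ because $G[U_i]$ is $K_3$-free by Lemma~\ref{lem:BE}\ref{BE-K3free} applied to the BE-structure --- in regimes (i),(ii) we put no edges inside $U_i$ at all, so $|Q\cap U_i|=1$ there). Let $S_2=\{i: |Q\cap U_i|=2\}\subseteq S_1$. For $i,j\in S_2$, the four vertices $Q\cap(U_i\cup U_j)$ form a $K_4$ inside $G[U_i\cup U_j]$; by Lemma~\ref{lem:BE}\ref{BE-K4free}, the pure BE-graph on $P_i\cup P_j$ is $K_4$-free, so this can only happen if the $U_i$--$U_j$ edges are the \emph{full} BE rule, i.e.\ $w(i,j)>\tfrac12$. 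Hence $S_2$ induces a clique in $R_{>1/2}$. Since $|Q|=|S_1|+|S_2|$, and $R$ is $\C K_t$-free, we get $|S_1|+|S_2|<t$, i.e.\ $\omega(G)<t$. (I should double-check the boundary cases $|S_2|=0$ or $|S_1|=|S_2|$, which correspond to $s_2\ge1$ in the definition of $\C K_t$, and the possibility $S_2=\emptyset$: if $S_2=\emptyset$ then $Q$ is a transversal clique in $R_{>0}$ of size $|S_1|$, and $\C K_t$-freeness with $s_2=1$ (pick any single vertex to double-count as a degenerate $K_1$ in $R_{>1/2}$, which is vacuous) needs a small separate check --- more simply, a transversal clique of size $t$ would already be forbidden since $\C K_t$ contains $(S_1,\{v\})$ for $v\in S_1$ whenever $|S_1|\ge t-1$; I will handle this explicitly.)

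For the independence number: any independent set $I$ meets each $U_i$ in an independent set of $G[U_i]$. In regimes where $U_i$ has the BE-internal edges, $G[U_i]$ is a copy of the BE-graph $G[P_i]$, whose independence number is $\le e^{-\eps\sqrt h/4}|U_i|$ by Lemma~\ref{lem:BE}\ref{BE-alpha}. In regimes (i),(ii) $G[U_i]$ is empty, so $|I\cap U_i|$ could be large --- this is the real obstacle and the reason for the factor $3$ in the bound. The fix: if $i$ is a vertex all of whose positive-weight edges are of type (ii) (weight $\le\tfrac12$), we should instead also add BE-internal edges to $U_i$ regardless, since this never hurts $K_t$-freeness (Lemma~\ref{lem:BE}\ref{BE-K4free} still applies: any $K_4$ using $\ge2$ vertices of such a $U_i$ would need a $K_3$ inside $U_i$ via BE-internal edges on two of them plus cross edges, contradicting $\BE$-$K_4$-freeness once we note the cross edges in regime (ii) are a \emph{sub}graph of the BE cross edges). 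So WLOG every $U_i$ carries BE-internal edges, giving $\alpha(G)\le\sum_i e^{-\eps\sqrt h/4}|U_i|=e^{-\eps\sqrt h/4}N$; the factor $3$ then absorbs the rounding of cluster sizes and a union bound over the random choices in regime (ii). I will phrase this via a first-moment computation showing that with positive probability the sampled graph simultaneously has the claimed $\alpha$ and the claimed $K_s$-density.

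Finally, the $K_s$-density. Condition on the cluster sizes $|U_i|=w(i)N+O(1)$. The expected number of copies of $K_s$ with vertices in distinct clusters $U_{a_1},\dots,U_{a_s}$ is $\prod_i |U_{a_i}| \cdot \prod_{p<q} d_{pq}$ where $d_{pq}$ is the $U_{a_p}$--$U_{a_q}$ density, which in every regime satisfies $d_{pq}\ge (1-\sqrt2\eps)\,w(a_p,a_q)$ (regime (i): both sides $0$; regime (ii): density $\ge w(1-O(\eps))$; regime (iii): $w(i,j)>\tfrac12$ but density only $\ge\tfrac12-\sqrt2\eps\ge(1-2\sqrt2\eps)w(i,j)$ since $w\le1$). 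Multiplying $\binom s2$ such factors loses at most a $(1-\sqrt2\eps)^{\binom s2}\ge 1-2\sqrt2 s^2\eps/2\cdot$... --- being a bit more careful, $(1-\sqrt 2\eps)^{\binom s2}\ge 1-\binom s2\sqrt2\eps\ge 1-\tfrac{s^2}{2}\sqrt2\eps$, and accounting for transversal-only copies (non-transversal copies contribute a negligible $O(N^{s-1})$, and copies using two vertices of one cluster only help) and for concentration of the random count (second moment / Azuma), we get with positive probability $d_{K_s}(G)\ge (1-2\sqrt2 s^2\eps)\,d_{K_s}(R)-o(1)$, and the $o(1)$ is absorbed for $N$ large. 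Combining the three events (small $\alpha$, correct $K_s$-count, and --- deterministically --- $K_t$-freeness) by a union bound over their failure probabilities, all of which are $o(1)$, yields the desired $G$.

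\medskip
\noindent\textbf{Main obstacle.} The delicate point is regime (iii) and the internal edges: we need $G[U_i\cup U_j]$ to be genuinely $K_4$-free (so that two-per-cluster pairs force $w>\tfrac12$) \emph{while} keeping the cross-density up at $\tfrac12-o(1)$ and the internal independence number down at $e^{-\Omega(\eps\sqrt h)}|U_i|$ --- this is exactly what the Bollob\'as--Erd\H os graph achieves, so the work is in verifying that our mixed construction (BE on high-weight pairs, sparsified-BE on low-weight pairs, no edges on zero-weight pairs) inherits all three properties simultaneously, which reduces to checking that every relevant subgraph is an (induced) subgraph of an honest $\BE$ graph and invoking Lemma~\ref{lem:BE}.
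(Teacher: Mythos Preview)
Your construction has a genuine gap in regime (iii), and it cannot be repaired without an additional idea. For a pair with $w(i,j)>\tfrac12$ you use the pure Bollob\'as--Erd\H os cross-rule, which yields bipartite density only $\tfrac12-\sqrt2\eps$ \emph{regardless} of how large $w(i,j)$ is. Your claimed inequality $\tfrac12-\sqrt2\eps\ge(1-2\sqrt2\eps)\,w(i,j)$ is simply false once $w(i,j)$ is close to $1$: at $w(i,j)=1$ it would require $\sqrt2\eps\ge\tfrac12$. In particular, if $R$ contains weight-$1$ edges (and the extremal weighted graphs in this paper \emph{always} do), your $G$ has cross-density only about $\tfrac12$ on those pairs, so $d_{K_s}(G)$ falls well short of $(1-o(1))d_{K_s}(R)$.

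The missing idea, which the paper uses, is a preprocessing step: since the $\C K_t$-freeness of $R$ depends only on the threshold graphs $R_{>0}$ and $R_{>1/2}$, one may round every edge weight \emph{up} to the nearest element of $\{0,\tfrac12,1\}$ without destroying $\C K_t$-freeness, and this can only increase $d_{K_s}(R)$. After rounding there are exactly three regimes and each has an obvious bipartite construction matching the weight exactly: empty for weight $0$, complete for weight $1$, and the Bollob\'as--Erd\H os rule for weight $\tfrac12$. With this rounding your sparsification regime (ii) disappears entirely, and the density computation goes through cleanly. A secondary technical point: the paper applies an independent random rotation $\rho_{ij}\in SO(h)$ to one side of each weight-$\tfrac12$ pair so that Lemma~\ref{lem:BE}\ref{BE-density} can be invoked to control the expected cross-density; in your proposal the point sets $P_i$ are fixed deterministically via Lemma~\ref{lem:BE}\ref{BE-alpha} and you then assert a cross-density bound, but that lemma does not control the cross-density between two such fixed sets.
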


\begin{proof}
Suppose that $V(R)=[r]$ for some integer $r$. Increasing each edge weight to the next multiple of $\frac 12$ preserves the $\C K_t$-freeness of $R$, so we may assume that all edge weights of $R$ are 0, $\frac 12$, or $1$. Set $\mu=\frac{\eps}{\sqrt h}$ as in the Bollob\'as--Erd\H os construction.

Choose integers $n_i\geq \lfloor w(i)N\rfloor$ such that $N=n_1+\cdots+n_r$. We construct an $N$-vertex graph $G$ on vertex set $V_1\cup\cdots\cup V_r$ as follows. Intuitively, $G[V_i,V_j]$ will be complete, empty, or a randomly rotated Bollob\'as--Erd\H os graph, depending on whether $w(i,j)$ is 1, 0, or $\frac 12$.

Suppose $N$ is sufficiently large. For each $i$, we may choose a set $V_i$ of $n_i$ points on $\sph^{h-1}$ satisfying \cref{lem:BE}\ref{BE-alpha}. Connect vertices in $\bigcup_iV_i$ as follows. Within each part $V_i$, add an edge between $\textbf{\textit{v}}_i, \textbf{\textit{v}}_i'\in V_i$ if $\|\textbf{\textit{v}}_i-\textbf{\textit{v}}_i'\|>2-\mu$. Let $G[V_i,V_j]$ be complete bipartite if $w(i,j)=1$ and empty if $w(i,j)=0$. If $w(i,j)=\frac 12$ for some $i<j$, then let $\rho_{ij}\in SO(h)$ be a rotation of $\sph^{h-1}$ chosen uniformly at random. Connect $\textbf{\textit{v}}_i\in V_i$ and $\textbf{\textit{v}}_j\in V_j$ if $\|\rho_{ij}\textbf{\textit{v}}_i-\textbf{\textit{v}}_j\|<\sqrt 2-\mu$.

Observe that each induced subgraph $G[V_i]$ is $K_3$-free with independence number $\al(G[V_i])\leq e^{-\eps\sqrt h/4}n_i$ by \cref{lem:BE}\ref{BE-K3free} and \ref{BE-alpha}. It follows that $\al(G)\leq e^{-\eps\sqrt h/4}N$.
Additionally, if $w(i,j)=\frac 12$, the induced subgraph $G[V_i\cup V_j]$ is the Bollob\'as--Erd\H os graph $\BE(\rho_{ij}(V_i),V_j)$, and is thus $K_4$-free by \cref{lem:BE}\ref{BE-K4free}.

Using these properties, we verify that $G$ is $K_t$-free. Suppose, for contradiction, that $G[W]$ is a clique, where $W$ is some set of $t$ vertices. Let $S_1=\{i\in[r]:|V_i\cap W|\geq 1\}$ and $S_2=\{i\in[r]:|V_i\cap W|\geq 2\}\subseteq S_1$. Because each induced subgraph $G[V_i]$ is $K_3$-free, it follows that $W$ has at most two points in $V_i$, and thus that $|S_1|+|S_2|=|W|=t$. For any distinct $i,j\in S_1$, there is an edge between $V_i$ and $V_j$, and it follows that $w(i,j)>0$. Additionally, for any distinct $i,j\in S_2$, there is a $K_4$ in $G[V_i\cup V_j]$. Because the Bollob\'as--Erd\H os graph is $K_4$-free, this implies that $w(i,j)>\frac 12$. We conclude that $(S_1,S_2)$ form a weighted $t$-clique in $R$, which is a contradiction. It follows that $G$ is $K_t$-free.

Lastly, we verify that $G$ has large $K_s$-density in expectation, using the independence of the random rotations $\rho_{ij}$. By \cref{lem:BE}\ref{BE-density}, if $w(i,j)=\frac 12$ then the expected edge density between $V_i$ and $V_j$ is at least $\frac 12-2\sqrt 2\eps$. Thus, if $N$ is sufficiently large, we have
\begin{align*}
\E[d_{K_s}(G)]&\geq\sum_{\substack{i_1,\ldots,i_s\in[r]\\\text{distinct}}}
\left(\prod_{j=1}^s\frac{n_{i_j}}N\right)\left(\prod_{1\leq j<k\leq s}
\begin{cases}0&w(i_j,i_k)=0\\
1/2-2\sqrt 2\eps&w(i_j,i_k)=1/2\\
1&w(i_j,i_k)= 1
\end{cases}\right)
\\
&\geq\sum_{\substack{i_1,\ldots,i_s\in[r]\\\text{distinct}}}\left(\prod_{j=1}^s(1-\eps)w(i_j)\right)\left(\prod_{1\leq j<k\leq s}(1-4\sqrt 2\eps)w(i_j,i_k)\right)
\\
&=(1-\eps)^s(1-4\sqrt 2\eps)^{s(s-1)/2}d_{K_s}(R)
\geq (1-2\sqrt 2s^2\eps)d_{K_s}(R).
\end{align*}
We conclude that there is a choice of the rotations $\rho_{ij}$ such that $d_{K_s}(G)\geq (1-2\sqrt 2 s^2\eps)d_{K_s}(R)$.
\end{proof}

\section{Understanding the extremal weighted graphs}\label{sec:extremal-weighted}
By Theorem~\ref{thm:reduction-to-R}, $\varrho_{s}(K_t)=\pi_{s}(\C K_t)$ for all $3\leq s\leq t-2$.
In this section, we show that the supremum $\pi_{s}(\C K_t)$ is attained by a weighted graph on at most $t-1$ vertices, and characterize the structure of a minimum-size extremal weighted graph more precisely. Our results are summarized as follows; together with Theorem~\ref{thm:reduction-to-R}, they imply Theorem~\ref{main 1}.

\begin{theorem}\label{thm:weight:main}
Fix integers $s,t$ satisfying $3\leq s\leq t-2$. There is an extremal $\C K_t$-free weighted graph $R$ achieving $K_s$-density $\pi_{s}(\C K_t)$ and satisfying the following properties.

\stepcounter{propcounter}
\begin{enumerate}[label = \rm({\bfseries \Alph{propcounter}\arabic{enumi}})]
\item\label{sup1} For any distinct $v,v'\in V(R)$, we have $w(v,v')\in\{\frac 12,1\}$.
\item\label{sup2} There is a partition $V(R)=B_1\cup\cdots\cup B_a$ into nonempty parts such that vertices inside the same part $B_i$ have the same weight, and an edge has weight $1/2$ if it lies within some $B_i$ and weight 1 otherwise. Moreover, setting $b=\sum_{i\in [a]}|B_i|=|V(R)|$, we have $b\geq s$ and $a+b= t-1$.
\item\label{sup3} For any $i$ and $j$, the cardinalities $|B_i|$ and $|B_j|$ differ by at most 1.
\item\label{sup4} If $|B_i|\geq|B_j|$ for any (possibly equal) $i,j\in[a]$, then $w(v_i)\leq w(v_j)$ for any $v_i\in B_i$ and $v_j\in B_j$. In particular, if $|B_i|=|B_j|$ then all vertices in $B_i$ and $B_j$ have the same weight.
\item\label{sup5} Either $a=1$ and $|B_1|=s$ or $a\geq 2$ and $|B_i| \le s-1$ for each $i\in [a]$.
\end{enumerate}
Moreover, all extremal $\C K_t$-free weighted graphs with minimum order satisfy~\ref{sup1}--\ref{sup5}.
\end{theorem}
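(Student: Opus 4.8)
The plan is to start from an arbitrary extremal $\C K_t$-free weighted graph of minimum order and progressively impose structure via local modification arguments, each of which either strictly decreases the order, strictly increases the $K_s$-density, or keeps both invariant while forcing one of the properties~\ref{sup1}--\ref{sup5}. The first reduction is a \emph{compression/rounding} step: I would show that rounding every edge weight up to the next multiple of $\tfrac12$ preserves $\C K_t$-freeness (this is essentially the observation already used in the proof of~\cref{reduction-LB}, since weighted $t$-cliques only care about the thresholds $0$ and $\tfrac12$) and can only increase $d_{K_s}(R)$; combined with extremality this gives~\ref{sup1}, and also lets me assume every edge has weight exactly $\tfrac12$ or $1$. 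Next, I would prove a \emph{transitivity} lemma: in an extremal graph with all weights in $\{\tfrac12,1\}$, the relation ``$w(v,v')=\tfrac12$'' is an equivalence relation on $V(R)$. The key point is that if $w(u,v)=w(v,w)=\tfrac12$ but $w(u,w)=1$, then one can reallocate weight between $u$, $v$, $w$ (or merge/split vertices) to build a $\C K_t$-free graph of the same order with strictly larger $K_s$-density, contradicting extremality — here the $\C K_t$-constraint translates cleanly into a bound on clique sizes in the auxiliary graphs $R_{>0}$ and $R_{>1/2}$. This yields the partition $V(R)=B_1\cup\cdots\cup B_a$ of~\ref{sup2}, with weight $\tfrac12$ inside parts and $1$ across, and a symmetrization (averaging vertex weights within a part, which does not change $d_{K_s}$ by convexity/linearity of the density in each coordinate and does not change the forbidden configuration) shows we may assume vertices in the same $B_i$ have equal weight.

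With the block structure in hand, the constraint ``$R$ is $\C K_t$-free'' becomes purely numerical: the largest $K_{s_1}$ in $R_{>0}$ has order $\sum_i \min(|B_i|,?)$... more precisely a clique in $R_{>0}$ picks \emph{any} number of vertices from each block, and a clique in $R_{>1/2}$ picks at most one vertex per block, so forbidding all $(S_1,S_2)$ with $|S_1|+|S_2|=t$ is equivalent to the single inequality $b + a \le t-1$ where $b=|V(R)|=\sum|B_i|$ and $a$ is the number of blocks (taking $S_2$ to be one vertex per block and $S_1$ everything). Extremality then forces $a+b=t-1$ exactly, since otherwise one could add a vertex to some block and increase the density; and $b\ge s$ is needed for $d_{K_s}(R)>0$. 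This establishes~\ref{sup2}. For~\ref{sup3} (block sizes nearly equal) and~\ref{sup4} (larger blocks get smaller vertex weight), I would use exchange arguments on the optimization problem: given $a+b=t-1$ fixed, $d_{K_s}$ is a polynomial in the vertex weights and block sizes, and I would compute the effect of moving one unit of size from $B_j$ to $B_i$, or of shifting an infinitesimal amount of vertex weight, showing that at the optimum the block sizes cannot differ by $2$ or more, and that the weight–size anti-monotonicity~\ref{sup4} must hold; in the equal-size case the problem is symmetric under permuting those blocks, and a standard convexity argument (the $K_s$-density restricted to the weights of two interchangeable blocks is symmetric and, along the segment preserving their sum, concave or affine) shows the optimum is attained with equal weights.

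The main obstacle, and the technical heart of the argument, will be~\ref{sup5}: ruling out the possibility that $a\ge 2$ while some block $B_i$ has size $\ge s$. The intuition is that a block of size $\ge s$ ``wastes'' too much of the budget $a+b=t-1$ on a sub-structure (a half-weight clique) whose internal $K_s$-density $(\tfrac12)^{\binom s2}$ is small, so it is always better to break such a block apart or redistribute its vertices into new singleton blocks joined completely to everything — increasing the cross-edges of weight $1$ at the cost of reducing internal weight-$\tfrac12$ edges, which a direct density computation shows is favorable once $a\ge 2$ supplies an outside block to connect to. Making this rigorous requires carefully comparing $d_{K_s}$ before and after the surgery while keeping $a+b=t-1$ and $\C K_t$-freeness; I expect to split into cases according to whether the new configuration still has $a'\ge 2$, and to invoke~\ref{sup3}--\ref{sup4} to pin down the optimal weights in each case. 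Finally, the ``moreover'' clause — that \emph{every} minimum-order extremal graph satisfies~\ref{sup1}--\ref{sup5} — follows because each reduction step above was shown not to increase the order and, when applied to a graph already of minimum order, either already holds or leads to a strict density improvement (a contradiction); so any minimum-order extremizer is a fixed point of all the reductions, hence satisfies all five properties.
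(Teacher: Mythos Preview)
Your global architecture---compactness plus successive local modifications that preserve $\C K_t$-freeness and never increase order---is the paper's strategy, and your treatment of~\ref{sup1} (round edge weights up to the next multiple of $\tfrac12$) and of the numerical reformulation ``$\C K_t$-free $\Longleftrightarrow a+b\le t-1$'' are both correct and match the paper.  There are, however, two genuine gaps and one smaller imprecision.

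\medskip
\textbf{The surgery for~\ref{sup5} goes the wrong way.}  You propose to ``redistribute vertices of a large block into new singleton blocks joined completely to everything''.  That operation keeps $b=|V(R)|$ fixed and \emph{increases} $a$, so $a+b$ jumps to at least $t$ and the new graph contains a weighted $t$-clique; it is not $\C K_t$-free.  The paper does the opposite: it takes two vertices $u,v$ of the oversized block and \emph{merges} them into a single vertex $v'$ of weight $2p$, with $w(v',\cdot)=1$ to every other vertex.  This lowers $b$ by one while raising $a$ by one (since $v'$ is a new singleton block), so $a+b$ is preserved and $\C K_t$-freeness survives.  The strict gain $d_{K_s}(R')>d_{K_s}(R)$ then reduces to an explicit binomial-coefficient inequality, using~\ref{sup4} to compare the last term when $a\ge 2$ and $|B_1|=s$.

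\medskip
\textbf{Properties~\ref{sup3}--\ref{sup4} are the technical heart, and an unspecified exchange argument is not enough.}  The reduction from general $a$ to two blocks forces you to compare not just $d_{K_s}$ but $d_{K_m}(R[B_i\cup B_j])$ for \emph{every} $2\le m\le |B_i|+|B_j|$, because the remaining blocks contribute a factor $d_{K_{s-m}}(R-(B_i\cup B_j))$ term-by-term.  The paper handles this in two regimes: when $(P-1)p>Qq$ a direct vertex-flip (\cref{P3-1}) works via a pairing $M(x,y)\ge M(y,x)$; when $(P-1)p\le Qq$ one must rewrite $d_{K_m}$ in the basis $N_{m,r}$ of \cref{P3-2a}, prove the coefficients $c_r$ are all positive by induction on~\eqref{eq:Ntod}, and then compare each $N_{m,r}$ separately using Maclaurin's inequality and a smoothing argument on elementary symmetric polynomials.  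None of this is visible in a generic ``compute the effect of moving one unit of size'' sketch.

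\medskip
\textbf{On~\ref{sup2}.}  Your claim that averaging vertex weights within a block ``does not change $d_{K_s}$ by convexity/linearity'' is off: $d_{K_s}$ is multilinear in the vertex weights, hence on the line $w(v_1)+w(v_2)=\text{const}$ it is strictly concave (the cross coefficient is positive once all edge weights are positive by~\ref{sup1}), so averaging \emph{strictly increases} density unless the weights were already equal.  With that correction your symmetrization does work, but the separate transitivity step you sketch is vague; the paper bypasses it by cloning one endpoint of a weight-$\tfrac12$ edge onto the other and applying AM--GM over all $s$-sets containing both, which simultaneously forces equal weights and equal edge-profiles---so identical vertices, hence the block decomposition, in one stroke.
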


Let $t$ and $s$ be integers such that $3\leq s\leq t-2$. We shall show in the following subsections that there exists a $\C K_t$-free weighted graph $R$ achieving $K_s$-density $\pi_{s}(\C K_t)$ with $|V(R)|$ minimized and satisfying~\ref{sup1}--\ref{sup5}.
Note that the weighted graph $R$ might not be unique.

Before beginning the proof, we introduce some notation which will be used in this section.
Let $R$ be a weighted graph and let $H$ be a graph on $[s]$. For a vertex set $S\subseteq V(R)$, the density of copies of $H$ containing $S$ is denoted by 
$$ d_H(R,S)=\sum_{\substack{\sigma: [s]\rightarrow  V(R) \\ S\subseteq \{ \sigma(1),\cdots,\sigma(s)\}}}\left(\prod_{i=1}^sw(\sigma(i))\right)\left(\prod_{ij\in E(H)}w(\sigma(i),\sigma(j))\right).$$
Similarly, write
\begin{align*}
d_H(R[S])&=\sum_{\substack{\sigma: [s]\rightarrow S}}\left(\prod_{i=1}^sw(\sigma(i))\right)\left(\prod_{ij\in E(H)}w(\sigma(i),\sigma(j))\right)
\quad\text{ and}
\\d_H(R-S)&=\sum_{\substack{\sigma: [s]\rightarrow  {V(R)\setminus S}}}\left(\prod_{i=1}^sw(\sigma(i))\right)\left(\prod_{ij\in E(H)}w(\sigma(i),\sigma(j))\right)
\end{align*}
for the density of copies of $H$ within $S$ and avoiding $S$, respectively. For convenience, we write $d_H(R,v)$ instead of $d_H(R,\{v\})$ and $d_H(R-v)$ instead of $d_H(R-\{v\})$. Additionally, when $H=K_0$, we define all densities to be 1.

We shall also require the following well-known inequality regarding symmetric functions. This is a special case of Maclaurin's inequality.

\begin{lemma}\label{maclaurin}
Let $x_1,\ldots,x_n$ be positive real numbers and let $x=(x_1+\cdots+x_n)/n$ be their average. For any integer $1\leq k\leq n$, we have
\[\sum_{1\leq i_1<\cdots<i_k\leq n}x_{i_1}x_{i_2}\cdots x_{i_k}\leq \binom nk x^k,\]
with equality if and only if all the $x_i$ are equal.
\end{lemma}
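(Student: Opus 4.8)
\textbf{Proof strategy for Lemma~\ref{maclaurin} (the special case of Maclaurin's inequality stated above).}

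The plan is to prove the inequality $e_k(x_1,\dots,x_n)\le \binom nk x^k$, where $e_k$ denotes the $k$-th elementary symmetric polynomial and $x$ is the arithmetic mean, together with the equality characterization. I would proceed by a smoothing (compression) argument, which keeps the proof self-contained and elementary. Fix $n$ and $k$ and fix the value of the sum $S:=x_1+\cdots+x_n=nx$; since $e_k$ is continuous and we are optimizing over the compact simplex $\{(x_1,\dots,x_n): x_i\ge 0,\ \sum x_i = S\}$, a maximizer exists. (We may pass to the closed simplex allowing zeros; if the maximizer has a zero coordinate we will see below it cannot be the unique optimum unless $k=n$ is impossible here since $k\le n$ and positivity is assumed, so I will note at the end that on the open region the sup is attained only in the interior.) First I would show that at a maximizer all coordinates are equal. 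Suppose not, say $x_i \neq x_j$. Write $e_k = A + B(x_i + x_j) + C x_i x_j$, where $A = e_k(\text{rest})$ collects terms using neither index, $B = e_{k-1}(\text{rest})$ collects terms using exactly one of $i,j$, and $C = e_{k-2}(\text{rest})$ collects terms using both (here "rest" means the other $n-2$ variables, and $e_{-1}:=0$, $e_0:=1$). Holding $x_i + x_j$ fixed and all other variables fixed, $e_k$ is an affine-plus-$Cx_ix_j$ function of the product $x_ix_j$; since $C = e_{k-2}(\text{rest}) \ge 0$, replacing $(x_i,x_j)$ by $\bigl(\tfrac{x_i+x_j}{2},\tfrac{x_i+x_j}{2}\bigr)$ \emph{weakly increases} $e_k$, and \emph{strictly} increases it whenever $C>0$ and $x_i\neq x_j$. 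This shows the maximum on the simplex is achieved at the barycenter $x_1=\cdots=x_n=x$ (choosing among maximizers one with the fewest distinct values and iterating the averaging step), where $e_k = \binom nk x^k$, giving the inequality.

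For the equality characterization I would argue that if the $x_i$ are not all equal then $e_k(x_1,\dots,x_n) < \binom nk x^k$ strictly. Pick indices $i,j$ with $x_i\neq x_j$. The delicate point is that the one-step averaging above is only \emph{strictly} improving when $C = e_{k-2}(\text{of the other } n-2 \text{ variables}) > 0$, which requires $k-2 \le n-2$, i.e. $k\le n$ — true — and also requires the other $n-2$ variables not to force $e_{k-2}=0$; since all $x_i$ are assumed positive, $e_{k-2}(\text{any subset}) > 0$ automatically (for $k\ge 2$; the cases $k=0,1$ are trivial, with $k=0$ handled by the convention $e_0 = 1 = \binom n0 x^0$, and $k=1$ giving exact equality $e_1 = nx = \binom n1 x^1$, so equality always holds there regardless — I would state $k\ge 2$ is the only nontrivial range and note $k=1$ is a degenerate "equality always" case, or simply restrict attention to $2\le k\le n$ since that is all the paper uses). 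Hence the averaging step strictly increases $e_k$, so a non-constant tuple cannot be a maximizer, proving that equality holds \emph{only} when all $x_i$ are equal.

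The main obstacle, such as it is, is bookkeeping in the decomposition $e_k = A + B(x_i+x_j) + Cx_ix_j$: one must be careful that $A,B,C$ are precisely the elementary symmetric polynomials of the remaining $n-2$ variables of the appropriate degrees, and that the sign of $C$ (and positivity, under the hypothesis $x_i>0$) is what drives both the inequality and its equality case. An alternative I would mention as a remark is the purely algebraic route via Newton's inequalities $p_{k-1}p_{k+1}\le p_k^2$ for the normalized symmetric means $p_k = e_k/\binom nk$, from which $p_k \le p_1^k = x^k$ follows by telescoping $p_k/p_{k-1}$ decreasing — but Newton's inequalities themselves need proof (e.g.\ via Rolle's theorem applied to the polynomial $\prod(T - x_i)$ and its derivatives), so the smoothing argument above is the most economical for a self-contained treatment. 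Since the paper only invokes Lemma~\ref{maclaurin} in the regime where all $x_i$ are positive, no boundary subtleties arise, and the proof closes cleanly.
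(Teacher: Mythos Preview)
The paper does not prove this lemma at all; it simply states it as ``a special case of Maclaurin's inequality'' and moves on. So there is no paper proof to compare against, and your self-contained smoothing argument is more than the paper provides.

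Your approach is correct. The decomposition $e_k = A + B(x_i+x_j) + Cx_ix_j$ with $A=e_k(\text{rest})$, $B=e_{k-1}(\text{rest})$, $C=e_{k-2}(\text{rest})$ is exactly right, and since $C\ge 0$ (and $C>0$ when the remaining variables are positive and $k\ge 2$), averaging two unequal coordinates weakly (resp.\ strictly) increases $e_k$. You also correctly flag that the equality clause as stated is vacuous for $k=1$, since $e_1=nx$ identically; the paper only invokes the lemma with $k\ge 2$, so this is harmless.

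One small point to tighten: the phrase ``choosing among maximizers one with the fewest distinct values and iterating the averaging step'' is not quite a terminating procedure, since averaging $x_i\ne x_j$ may produce a new value $(x_i+x_j)/2$ not already present. A clean fix is: among maximizers on the closed simplex, pick one minimizing $\sum x_i^2$ (this set is compact). If two coordinates differ, averaging them keeps $e_k$ maximal (since $C\ge 0$) but strictly decreases $\sum x_i^2$, a contradiction. Hence every such maximizer is the barycenter, giving the inequality; the strict-improvement step (with $C>0$ under positivity) then gives the equality characterization for $k\ge 2$.
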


\subsection{Proof of~\ref{sup1}}

For each integer $n\geq s$, let $R_n$ be an $n$-vertex $\C K_t$-free weighted graph of maximum $K_s$-density. Such a weighted graph $R_n$ exists because the space of $n$-vertex weighted graphs, which is parametrized by possible choices of the vertex and edge weights, is compact.

We claim that $d_{K_s}(R_{n-1})\geq d_{K_s}(R_n)$ if $R_n$ contains an edge of weight 0. Suppose that $w(v_1,v_2) = 0$ for two distinct vertices $v_1,v_2\in V(R_n)$. For $i\in[2]$, let $R'_i$ be the $(n-1)$-vertex weighted graph obtained from $R_n$ by deleting $v_{3-i}$ and increasing the weight of $v_i$ to $w(v_1)+w(v_2)$. Clearly, both $R'_1$ and $R'_2$ are $\C K_t$-free. Moreover, writing $\alpha_i= \frac{w(v_i)}{w(v_1)+w(v_2)}$ for $i\in[2]$, we have
\begin{align}\label{eq:symm1}
\al_1\cdot d_{K_s}(R'_1)+\al_2\cdot d_{K_s}(R'_2)&=
d_{K_s}(R_n-\{ v_1,v_2\} )+d_{K_s}(R_n-v_2,v_1)+d_{K_s}(R_n-v_1,v_2)\nonumber
\\&= d_{K_s}(R_n).
\end{align}
This implies $d_{K_s}(R_{n-1})\geq\max_{i\in[2]}d_{K_s}(R'_i)\geq d_{K_s}(R_n)$. In particular, we have $d_{K_s}(R_{n-1})\geq d_{K_s}(R_n)$ for all $n\geq t$, as any $\C K_t$-free weighted graph on at least $t$ vertices must contain an edge of weight 0.

It follows that $\pi_s(\C K_t)=\sup_{n\geq s}d_{K_s}(R_n)$ is attained by a weighted graph $R_n$ on at most $t$ vertices. Moreover, any minimal-order weighted graph attaining the $K_s$-density $\pi_s(\C K_t)$ must have strictly positive edge weights.

We conclude the proof of~\ref{sup1} by observing that if $R$ is a $\C K_t$-free weighted graph with maximum $K_s$-density and strictly positive edge weights, then all edge weights of $R$ must be either $\frac 12$ or 1, as increasing any edge weight to the next multiple of $\frac 12$ will preserve the $\C K_t$-freeness of $R$ while increasing its $K_s$-density.

In the remaining subsections, we will show that any extremal $\mathcal K_t$-free weighted graph satisfying~\ref{sup1} --- and in particular, all such graphs of minimum order --- also satisfy~\ref{sup2}--\ref{sup5}.

\subsection{Proof of~\ref{sup2}}

Let $R$ be an extremal $\mathcal K_t$-free weighted graph satisfying~\ref{sup1}. We observe that $R$ must have at least $s$ vertices, as $d_{K_s}(R)$ would be 0 if $|V(R)|<s$. Moreover, if $R$ has exactly $s$ vertices, say $v_1,\ldots,v_s$, then
\[
d_{K_s}(R)=s!\left(\prod_{i=1}^sw(v_i)\right)\left(\prod_{1\leq i<j\leq s}w(v_i,v_j)\right)
\]
is maximized when the vertex weights are equal and the number of edges of weight 1 is maximized subject to the constraint that $R_{>1/2}$ is $K_{t-s}$-free. The latter condition holds if and only if $R_{>1/2}$, viewed as an unweighted graph, is the Tur\'an graph $T_{t-s-1}(s)$. Equivalently, $V(R)$ must admit a partition $V(R)=B_1\cup\cdots B_a$ with $a\leq t-s-1$ such that edges have weight $1/2$ if they lie within some part $B_i$ and weight 1 otherwise.

We now show that $R$ admits such a partition if $|V(R)|=b\geq s+1$. It suffices to show that any two vertices $v_1,v_2\in V(R)$ with $w(v_1,v_2)=\frac{1}{2}$ must be identical, i.e., $w(v_1)=w(v_2)$ and $e(v_1,u)=e(v_2,u)$ for any third vertex $u\in V(R)$.
For $i\in [2]$, let $R_i$ be the graph obtained from $R$ by changing the edge weight of $(v_{3-i},u)$ to $w(v_i,u)$ for all $u\in V(R)\setminus \{v_1,v_2\}$, and changing the vertex weights of both $v_1$ and $v_2$ to $\frac{w(v_1)+w(v_2)}{2}$. We claim that $R_1$ and $R_2$ are $\C K_t$-free. Indeed, suppose that $R_1$ contains a weighted $t$-clique $(S_1,S_2)$ with $S_2\subseteq S_1\subseteq V(R_1)$ and $|S_1|+|S_2|=t$. Because $w(v_1,v_2)=\frac 12$ in $R_1$, the set $S_2$ cannot contain both $v_1$ and $v_2$; as these vertices are indistinguishable in $R_i$, we may assume that $S_2$ does not contain $v_2$. Hence, $R_1$ and $R$ have the same edge weights between vertices of $S_2$. Furthermore, all edge weights of $R$ (and in particular, all edge weights between vertices of $S_1$) are positive because $R$ satisfies~\ref{sup1}. It follows that $S_1$ and $S_2$ form a weighted $t$-clique in $R$, a contradiction. The proof that $R_2$ is $\C K_t$-free is analogous.

Write $\alpha_i= \frac{w(v_i)}{w(v_1)+w(v_2)}$ for $i\in[2]$ as in~\eqref{eq:symm1}. We see that
$$\sum_{i\in[2]}\alpha_i\cdot \Big(d_{K_s}(R_i)-d_{K_s}(R_i,\{v_1,v_2\}) \Big)=d_{K_s}(R)-d_{K_s}(R,\{v_1,v_2\}).$$
To compare $d_{K_s}(R,\{v_1,v_2\})$ and $d_{K_s}(R_i,\{v_1,v_2\})$, let $S=\{v_1,\ldots,v_s\}\subseteq V(R)$ be any set of $s$ vertices containing $v_1$ and $v_2$. We observe that
\begin{align*}
    d_{K_s}(R,S)&=s!\cdot w(v_1)w(v_2)w(v_1,v_2)\left(\prod_{i=3}^{s}w(v_i)\right)
    \left(\prod_{i=3}^{s}w(v_1,v_i)\right)\left(\prod_{i=3}^{s}w(v_2,v_i)\right)
    \left(\prod_{s\ge i>j\ge3 }w(v_i,v_j)\right)\\
    &=\frac{s!}{2} w(v_1)w(v_2)W_1W_2W_3,
\end{align*}
where $W_1:=\prod_{i=3}^{s}w(v_1,v_i)$, $W_2:=\prod_{i=3}^{s}w(v_2,v_i)$, and $W_3:= \big(\prod_{i=3}^{s}w(i)\big)\big(\prod_{s\ge i>j\ge3 }w(v_i,v_j)\big)$. Furthermore, by the AM-GM inequality, we have
\[
\sum_{i\in[2]}\alpha_i\cdot d_{K_s}(R_i,S)
=\sum_{i\in[2]}\alpha_i\cdot\frac{s!}2\left(\frac{w(v_1)+w(v_2)}2\right)^2W_i^2W_3
\geq\frac{s!}2w(v_1)w(v_2)W_1W_2W_3
= d_{K_s}(R,S),
\]
with equality if and only if $w(v_1)=w(v_2)$ and $W_1=W_2$.

Summing over all such sets $S$, we have
\[
\sum_{i\in[2]}\alpha_i\cdot d_{K_s}(R_i,\{v_1,v_2\})
=\sum_{i\in[2]}\sum_{\substack{S\supseteq\{v_1,v_2\},\\|S|=s}}\alpha_i\cdot d_{K_s}(R_i,S)
\geq\sum_{\substack{S\supseteq\{v_1,v_2\},\\|S|=s}}d_{K_S}(R,S)
=d_{K_s}(R,\{v_1,v_2\}).
\]
Moreover, equality holds if and only if $w(v_1)=w(v_2)$ and $W_1=W_2$ for all sets $S$.
We claim that this condition implies that $w(v_1,u)=w(v_2,u)$ for each $u\in V(R)-\{v_1,v_2\}$. Indeed, because all edge weights are either $1/2$ or $1$, it follows that
\[
\sum_{i=3}^sw(v_1,v_i)=\sum_{i=3}^sw(v_2,v_i)
\]
for any $s-2$ distinct vertices $v_3,\ldots,v_s\in V(R)-\{v_1,v_2\}$. Letting $\mathbf w^{(1)},\mathbf w^{(2)}\in\mathbb \{\frac 12,1\}^{b-2}$ be vectors defined as $\mathbf w^{(i)}_u=w(v_i,u)$ for $u\in V(R)-\{v_1,v_2\}$, this yields a linear relation $\mathbf v\cdot\mathbf w^{(1)}=\mathbf v\cdot\mathbf w^{(2)}$, where $\mathbf v\in\mathbb R^{b-2}$ is the indicator vector of $\{v_3,\ldots,v_s\}$. When $b>s$, the vectors $\mathbf v$ span $\mathbb R^{b-2}$, and it follows that $\mathbf w^{(1)}=\mathbf w^{(2)}$.

We conclude that $\alpha_1\cdot d_{K_s}(R_1)+\alpha_2\cdot d_{K_s}(R_2)\geq d_{K_s}(R)$, with equality only if $w(v_1)=w(v_2)$ and $w(v_1,u)=w(v_2,u)$ for any third vertex $u$. Because $R$ is extremal, it follows that any $v_1,v_2\in V(R)$ with $w(v_1,v_2)=1/2$ must satisfy these conditions. This implies that $V(R)$ may be partitioned into $B_1\cup\cdots\cup B_a$ such that vertices within each part have the same weights, and edges have weight $1/2$ if they lie within some part $B_i$ and weight $1$ otherwise.

Lastly, we show that if $R$ is extremal and $V(R)$ admits such a partition $B_1\cup\cdots\cup B_a$ then $a+b=t-1$, where $b=|V(R)|\geq s$. If $a+b\geq t$ then we may form a weighted $t$-clique $(S_1,S_2)$ by setting $S_1=V(R)$ and letting $S_2$ contain one vertex from each of $B_1,\ldots,B_{t-b}$. If $a+b\leq t-2$ then we claim that $R$ is not extremal. Choose a vertex $v\in B_1$ and let $R'$ be the weighted graph obtained from $R$ by replacing $v$ with two vertices $v_1,v_2$ of weight $\frac{w(v)}2$, setting $w(v_1,v_2)=1/2$ and $w(v_i,u)=w(v,u)$ for $u\in V(R)-\{v\}$ and $i\in[2]$. We note that $R'$ is $\mathcal K_t$-free: $R'_{>1/2}$ is $K_{a+1}$-free, so for any weighted clique $(S_1,S_2)$ in $R'$, we have $|S_1|+|S_2|\leq |V(R')|+a=b+1+a\leq t-1$. Moreover, it is clear that $d_{K_s}(R')>d_{K_s}(R)$, contradicting the extremality of $R$. It follows that $a+b=t-1$, as desired.

\subsection{Proof of~\ref{sup3} and~\ref{sup4} for $a=2$}

We first prove~\ref{sup3} and~\ref{sup4} for weighted graphs $R$ satisfying~\ref{sup2} with $a=2$ parts. For convenience, we introduce the following notation. Given positive integers $P,Q$ and real numbers $p,q\in(0,1)$ satisfying $pP+qQ=1$, let $R(p,P;q,Q)$ denote the $(P+Q)$-vertex weighted graph satisfying~\ref{sup2} with parameters $a=2$, $|B_1|=P$, and $|B_2|=Q$, such that $w(v_1)=p$ and $w(v_2)=q$ for any vertex $v_1\in B_1$ or $v_2\in B_2$.

In \cref{P3-1,P3-2,P3-3} below, we show that if $R=R(p,P;q,Q)$ does not satisfy~\ref{sup3} or~\ref{sup4} then there is another weighted graph $R'$ on $P+Q$ vertices such that $R'_{>\frac 12}$ is also bipartite and $d_{K_m}(R')>d_{K_m}(R)$ for all $m$ in the range $2\leq m\leq P+Q$. This is a slightly stronger statement than necessary to handle the $a=2$ case, but it will prove necessary when we consider $a\geq 3$ in the next subsection.

\begin{lemma}\label{P3-1}
Let $P,Q$ be positive integers and $p,q\in(0,1)$ real numbers such that $pP+qQ=1$. If $P\geq Q+1$ and $(P-1)p>Qq$, then there exists a weighted graph $R'$ with $P+Q$ vertices such that $R'_{>\frac 12}$ is bipartite and $d_{K_m}(R(p,P;q,Q))< d_{K_m}(R')$ for all $2\leq m\leq P+Q$.
\end{lemma}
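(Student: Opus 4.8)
The plan is to move mass from the larger part $B_1$ to the smaller part $B_2$ by reducing the number of vertices in $B_1$ and increasing their weight, while keeping the bipartite structure of $R_{>1/2}$. Concretely, I would take $R' = R(p', P-1; q', Q+1)$ for suitable $p', q' \in (0,1)$ with $(P-1)p' + (Q+1)q' = 1$; the natural choice is to keep the total weight of $B_1$ only slightly decreased, i.e.\ set the new $B_1$-weight so that $(P-1)p'$ is close to (but below) $(P-1)p$, and give the freed-up weight to $B_2$. The hypothesis $P \geq Q+1$ guarantees $R'$ still has $|B_1'| = P-1 \geq Q = |B_2| - 1$... more precisely it guarantees $R'$ is a genuine weighted graph on $P+Q$ vertices with two nonempty parts, and $R'_{>1/2}$ is bipartite by construction. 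The hypothesis $(P-1)p > Qq$ is what makes the transfer profitable: it says the per-vertex ``marginal value'' is still higher in $B_1$ (in the appropriate sense) so shifting a vertex out of $B_1$ and rebalancing strictly increases every clique density.

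The key computational step is to express $d_{K_m}(R(p,P;q,Q))$ in closed form and differentiate. Since all edges within a part have weight $1/2$ and all crossing edges have weight $1$, a homomorphism $\sigma\colon [m] \to V(R)$ contributing to $d_{K_m}$ that places $i$ vertices into $B_1$ (in some order) and $m-i$ into $B_2$ contributes $p^i q^{m-i}$ times $(1/2)^{\binom i 2 + \binom{m-i}{2}}$ times the number of ways, which gives
\[
d_{K_m}(R(p,P;q,Q)) = m! \sum_{i=0}^{m} \left(\tfrac12\right)^{\binom i2 + \binom{m-i}{2}} \binom{P}{i}\binom{Q}{m-i} p^i q^{m-i},
\]
where $\binom{P}{i}$ accounts for the choice of which $i$ distinct vertices of $B_1$ are used, etc. I would then consider the one-parameter family obtained by continuously deforming $(p,P)$ and $(q,Q)$: replace the integer multiplicities by a continuous ``capacity'' and track how $d_{K_m}$ changes. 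The cleanest route is probably to first prove a discrete transfer inequality directly — compare $R(p,P;q,Q)$ with $R(\tilde p, P-1; \tilde q, Q+1)$ where one vertex's worth of weight is moved — by pairing up terms in the two sums and using $(P-1)p > Qq$ together with convexity of $x \mapsto x^i$ to show each paired difference is nonnegative, with at least one strict. Alternatively, set up a smooth path $R_\lambda$ interpolating between $R$ and $R'$ (keeping $R_\lambda{}_{>1/2}$ bipartite throughout) and show $\frac{d}{d\lambda} d_{K_m}(R_\lambda) > 0$ along it.

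The main obstacle I anticipate is handling the binomial coefficients $\binom{P}{i}$ and $\binom{Q}{m-i}$ cleanly when $P$ changes by $1$: the identity $\binom{P}{i} = \binom{P-1}{i} + \binom{P-1}{i-1}$ lets one rewrite the sum for $R$ in terms of the ``$P-1$'' binomials, after which the comparison with $R'$ becomes a comparison of two sums over the same index set with coefficients $p^i$ versus $(p')^i$ and $q^{m-i}$ versus $(q')^{m-i}$; pushing through the inequality then reduces to a weighted-power-mean estimate driven by $(P-1)p > Qq$. I would organize the argument so that the same manipulation simultaneously yields strict inequality for all $m$ in the stated range $2 \leq m \leq P+Q$ (for $m > P+Q$ both sides vanish, and $m \leq 1$ is trivial), since later subsections need the full range, not just $m = s$. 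If the direct term-by-term pairing proves unwieldy, I would fall back on the derivative-along-a-path approach, where the monotonicity in $\lambda$ follows from a single application of a Maclaurin-type inequality (\cref{maclaurin}) to the vertex weights in each part.
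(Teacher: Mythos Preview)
Your construction of $R'$ is unnecessarily restrictive, and this is what makes the comparison look hard. You insist on $R' = R(p',P-1;q',Q+1)$ with rebalanced uniform weights, but the lemma only asks for $R'_{>1/2}$ bipartite. The paper instead picks a single vertex $u\in B_1$ (of weight $p$) and \emph{flips} every edge at $u$, sending $w(u,v)\mapsto \tfrac32 - w(u,v)$. This moves $u$ into the other side of the bipartition while keeping its weight $p$; the resulting $R'$ has parts of sizes $P-1$ and $Q+1$ but is \emph{not} of the form $R(p',P-1;q',Q+1)$, since the enlarged part now carries $Q$ vertices of weight $q$ and one of weight $p$. The payoff is enormous: $R-u$ and $R'-u$ are literally identical, so $d_{K_m}(R')-d_{K_m}(R)=d_{K_m}(R',u)-d_{K_m}(R,u)$ and you only have to compare cliques through $u$.

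That comparison is then a two-line pairing argument. A $K_m$ through $u$ using $x$ further vertices of $B_1$ and $y$ of $B_2$ (so $x+y=m-1$) contributes $M(x,y)(1/2)^x$ in $R$ and $M(x,y)(1/2)^y$ in $R'$, where $M(x,y)=\binom{P-1}{x}\binom{Q}{y}p^{x+1}q^y$ times a factor symmetric in $(x,y)$. For $x>y$ one checks
\[
\frac{M(x,y)}{M(y,x)}=\left(\frac{p}{q}\right)^{x-y}\prod_{i=1}^{x-y}\frac{P-y-i}{Q+1-y-i}\geq\left(\frac{(P-1)p}{Qq}\right)^{x-y}>1,
\]
using $(P-1)p>Qq$; pairing $(x,y)$ with $(y,x)$ then gives $d_{K_m}(R,u)<d_{K_m}(R',u)$. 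Your route via uniform $(p',q')$ is essentially the content of the \emph{next} lemma in the paper (Lemma~\ref{P3-2}), which handles exactly the comparison $R(p,P;q,Q)$ versus $R(p',P-1;q',Q+1)$ but under the complementary hypothesis $(P-1)p\le Qq$ and requires substantially heavier machinery (the $N_{m,r}$ decomposition plus Maclaurin). Your sketch does not make clear that this machinery would go through under $(P-1)p>Qq$, and the edge-flip trick renders it unnecessary here anyway.
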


\begin{proof}
Set $R = R(p,P;q,Q)$ and let $u$ be a vertex in $R$ with weight $p$. Let $R'$ be the graph obtained from $R$ by changing the weights of all edges incident to $u$: if $(u,v)$ has edge weight $w\in\{\frac 12,1\}$ in $R$, then we assign it the weight $\frac 32-w$ in $R'$. It is clear that $R'_{>\frac 12}$ is a complete bipartite graph with parts of size $P-1$ and $Q+1$.

Fix $m$ with $2\leq m\leq P+Q$. We verify that $d_{K_m}(R,u)<d_{K_m}(R',u)$. We have that
\begin{align*}
    &d_{K_m}(R,u) = m!\sum_{x+y=m-1}\binom{P-1}{x}\binom{Q}{y}p^{x+1} q^y \left(\frac{1}{2}\right)^{\binom{x}{2}+\binom{y}{2}+x},\\
    &d_{K_m}(R',u) =m! \sum_{x+y=m-1}\binom{P-1}{x}\binom{Q}{y}p^{x+1} q^y \left(\frac{1}{2}\right)^{\binom{x}{2}+\binom{y}{2}+y}.
\end{align*}
Let $M(x,y) = \binom{P-1}{x}\binom{Q}{y}p^{x+1} q^y $. If $x\ge y$ then
\begin{align*}
    \frac{M(x,y)}{M(y,x)} = \frac{p^{x-y}}{q^{x-y}} \prod_{i=1}^{x-y}\frac{P-y-i}{Q+1-y-i}\ge \frac{p^{x-y}}{q^{x-y}}\left(\frac{P-1}{Q}\right)^{x-y}\ge 1,
\end{align*}
with equality only if $x=y$. This in turn implies that
\[M(x,y)\left(\frac{1}{2}\right)^{x}+M(y,x)\left(\frac{1}{2}\right)^{y} \le M(x,y)\left(\frac{1}{2}\right)^{y}+M(y,x)\left(\frac{1}{2}\right)^{x}\]
for any $x,y$, again with equality only if $x=y$. Thus, 
 \begin{align*}
    2d_{K_m}(R,u)&= m!\sum_{x+y=m-1}\left(\frac{1}{2}\right)^{\binom{x}{2}+\binom{y}{2}}\bigg(
    M(x,y)\left(\frac{1}{2}\right)^{x}+M(y,x)\left(\frac{1}{2}\right)^{y}\bigg)\\
    &< m!\sum_{x+y=m-1}\left(\frac{1}{2}\right)^{\binom{x}{2}+\binom{y}{2}}\bigg(
    M(x,y)\left(\frac{1}{2}\right)^{y}+M(y,x)\left(\frac{1}{2}\right)^{x}\bigg) = 2d_{K_m}(R',u).    
 \end{align*}
To conclude the proof, we observe that $d_{K_m}(R-u)=d_{K_m}(R'-u)$, and thus
\[d_{K_m}(R)=d_{K_m}(R,u)+d_{K_m}(R-u)<d_{K_m}(R',u)+d_{K_m}(R'-u)=d_{K_m}(R').\qedhere\]
\end{proof}

Next, we handle~\ref{sup3} in the case that $(P-1)p\leq Qq$. To help us bound $d_{K_m}(R(p,P;q,Q))$ in this case, we shall write it in terms of the following quantities. Given integers $m,r$ with $0\leq r\leq m/2$, define
\begin{align*}
    N_{m,r}(p,P;q,Q) &= r!\cdot\binom{P}{r}p^r \cdot r!\cdot\binom{Q}{r}q^r\sum_{\substack{x+y=m,\\x, y\ge r}} \binom{P-r}{x-r}p^{x-r}\binom{Q-r}{y-r}q^{y-r}\\
    &=\sum_{\substack{x+y=m,\\x, y\ge r}} \binom{P}{x}p^{x}\binom{Q}{y}q^{y}\frac{x!y!}{(x-r)!(y-r)!}.
\end{align*}
Intuitively, $N_{m,r}$ should be thought of as counting copies of $K_m$ in $R(p,P;q,Q)$ with $r$ labeled vertices in each part. We now show that $d_{K_m}(R(p,P;q,Q))$ is a positive linear combination of these quantities.

\begin{lemma}\label{P3-2a}
Fix a positive integer $m$. There exist constants $c_{r}>0$ for $0\leq r\leq \lfloor m/2\rfloor$ such that
\[
d_{K_m}(R(p,P;q,Q))=\sum_{r=0}^{\lfloor m/2\rfloor}c_{r}N_{m,r}(p,P;q,Q)
\]
for any $P,Q,p,q$.
\end{lemma}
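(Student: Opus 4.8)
The plan is to expand $d_{K_m}(R(p,P;q,Q))$ directly as a sum over maps $\sigma:[m]\to V(R)$ and to organize the resulting terms by how the internal (weight-$\tfrac12$) edges are distributed. First I would write
\[
d_{K_m}(R(p,P;q,Q))=\sum_{x+y=m}\binom{P}{x}\binom{Q}{y}p^{x}q^{y}\,m!\left(\tfrac12\right)^{\binom x2+\binom y2},
\]
since a copy of $K_m$ using $x$ vertices of $B_1$ and $y$ vertices of $B_2$ contributes the product of vertex weights $p^xq^y$, a factor $m!$ for the labeling, all cross edges having weight $1$, and exactly $\binom x2+\binom y2$ internal edges of weight $\tfrac12$. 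Similarly,
\[
N_{m,r}(p,P;q,Q)=\sum_{\substack{x+y=m\\x,y\ge r}}\binom Px\binom Qy p^xq^y\frac{x!\,y!}{(x-r)!(y-r)!}.
\]
So, comparing coefficients of $\binom Px\binom Qy p^xq^y$, the lemma reduces to the purely numerical identity: for every $x,y\ge 0$ with $x+y=m$ there exist constants $c_0,\dots,c_{\lfloor m/2\rfloor}>0$ (independent of $x,y$, hence of $P,Q,p,q$) such that
\[
m!\left(\tfrac12\right)^{\binom x2+\binom y2}=\sum_{r=0}^{\min(x,y)}c_r\,\frac{x!\,y!}{(x-r)!\,(y-r)!}.
\]
(When $r>\min(x,y)$ the falling-factorial term vanishes, so the upper limit $\lfloor m/2\rfloor$ is harmless.)

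The key step is then to choose the $c_r$ and prove positivity. I would substitute $x+y=m$ and think of both sides as functions of the single variable $j=\min(x,y)\in\{0,\dots,\lfloor m/2\rfloor\}$ — more precisely, after dividing by $m!$, the right side $\sum_r c_r\binom xr\binom yr r!^2/\binom mx^{-1}\cdots$ — actually cleaner: divide the target identity by $x!\,y!$ to get $\frac{m!}{x!\,y!}\left(\tfrac12\right)^{\binom x2+\binom y2}=\sum_r c_r\binom xr\binom yr\big/\big(\text{something}\big)$; rather, note $\frac{x!y!}{(x-r)!(y-r)!}=\binom xr\binom yr r!^2$, so the identity becomes
\[
m!\left(\tfrac12\right)^{\binom x2+\binom y2}=\sum_{r\ge 0}\bigl(c_r\,r!^2\bigr)\binom xr\binom yr.
\]
Now $\binom xr\binom yr$ as $r$ ranges over $0,\dots,\lfloor m/2\rfloor$ forms a basis (triangular in an appropriate sense) for the space of symmetric quantities depending on the unordered pair $\{x,y\}$ with $x+y=m$, equivalently for polynomials in $xy$ of degree $\le\lfloor m/2\rfloor$; and the left side, $\left(\tfrac12\right)^{\binom x2+\binom y2}=\left(\tfrac12\right)^{\binom m2-xy}=2^{xy}\left(\tfrac12\right)^{\binom m2}$, is indeed a function of $xy$ alone. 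So the expansion coefficients $d_r:=c_r r!^2$ exist and are unique; it remains to show $d_r>0$. I would exhibit them explicitly via a finite-difference / inclusion–exclusion formula, namely $d_r=\sum_{i}(-1)^{?}\binom{?}{?}\cdot m!\,2^{-\binom m2}\cdot 2^{(\text{value of }xy\text{ at the }i\text{th node})}$ evaluated at the integer points; the cleanest route is to set $g(k)=m!\,2^{-\binom m2}2^{k}$ where $k=xy$ takes the values $k_j=j(m-j)$ for $j=0,1,\dots,\lfloor m/2\rfloor$, express $\binom xr\binom yr$ in terms of these same nodes, and solve the resulting triangular linear system; positivity of $d_r$ then follows because $g$ is (totally) convex — all forward differences of the exponential $2^{k}$ along an increasing sequence of nodes are positive — while the change-of-basis matrix between $\{\binom xr\binom yr\}$ and the Lagrange/Newton basis at the nodes $k_j$ is totally nonnegative.

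The main obstacle I anticipate is making the positivity argument clean rather than computational: the nodes $k_j=j(m-j)$ are not equally spaced, so one cannot quote the elementary fact that finite differences of a convex sequence are nonnegative verbatim. I would handle this by working instead in the Newton basis at the (increasing) nodes $k_0<k_1<\dots<k_{\lfloor m/2\rfloor}$: the divided differences $g[k_0,\dots,k_j]$ of the function $k\mapsto 2^k$ are strictly positive (this IS true for arbitrary increasing nodes, since $2^k$ has strictly positive derivatives of all orders, so all divided differences are strictly positive by the mean-value form of divided differences), and then translate from the Newton basis back to the basis $\{\binom xr\binom yr\}$, checking that this transition has nonnegative entries — which it does, because $\binom xr\binom yr=\binom{k}{?}$-type combinatorial quantities that are themselves nonnegative-integer combinations of the "falling factorial in $k$" Newton polynomials $\prod_{i<j}(k-k_i)$ evaluated appropriately, or more simply because each $\binom xr\binom yr$ counts something and can be re-expanded with nonnegative coefficients. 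As a fallback if the basis-change bookkeeping gets unwieldy, I would instead prove $d_r>0$ directly by a combinatorial/probabilistic interpretation: $N_{m,r}$ counts (weighted) copies of $K_m$ with an ordered $r$-subset distinguished in each side, and one can build $d_{K_m}$ by an inclusion–exclusion over which internal pairs are "required" versus "free", giving the $c_r$ as explicit alternating sums that telescope to something manifestly positive. Either way the heart of the matter is the strict convexity of $t\mapsto 2^t$, which forces all the relevant coefficients to be strictly positive.
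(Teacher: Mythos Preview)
Your reduction to the coefficient identity is exactly what the paper does: both of you rewrite $d_{K_m}(R(p,P;q,Q))$ as $\frac{m!}{2^{\binom m2}}\sum_{x+y=m}\binom Px\binom Qy p^xq^y\,2^{xy}$, observe that the $N_{m,r}$ form a triangular basis for the symmetric linear combinations of these monomials, and deduce existence and uniqueness of the $c_r$. Where you diverge is in the positivity argument.

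The paper proves $c_r>0$ by a bare-hands induction on $r$. Matching coefficients at $(x,y)=(r,m-r)$ gives
\[
\frac{m!}{2^{\binom m2}}\,2^{r(m-r)}=\sum_{i=0}^r\frac{r!(m-r)!}{(r-i)!(m-r-i)!}\,c_i,
\]
and the paper compares this recurrence at $r$ and at $r-1$ (via $2^{r(m-r)}=2^{m-2r+1}\cdot 2^{(r-1)(m-r+1)}$ and the elementary bound $2^{m-2r+1}\ge m-2r+2$) to force $c_r>0$. No divided differences, no change of basis.

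Your divided-difference route is correct in principle, and in fact cleaner than you realize --- but you are missing the one-line observation that finishes it. With $k=xy$ and $k_i=i(m-i)$,
\[
\binom xr\binom yr=\frac{1}{r!^2}\prod_{i=0}^{r-1}(x-i)(y-i)=\frac{1}{r!^2}\prod_{i=0}^{r-1}\bigl(xy-i(x+y)+i^2\bigr)=\frac{1}{r!^2}\prod_{i=0}^{r-1}(k-k_i).
\]
So $\{\binom xr\binom yr\}_r$ is not merely \emph{related} to the Newton basis at the nodes $k_0<k_1<\cdots$; up to the scalars $r!^2$ it \emph{is} the Newton basis. There is no transition matrix to analyze. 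Plugging this into your identity $m!\,2^{-\binom m2}2^{k}=\sum_r (c_r r!^2)\binom xr\binom yr$ shows that $c_r$ is precisely the $r$th divided difference of $g(k)=m!\,2^{-\binom m2}2^{k}$ at these nodes, and the mean-value form $g[k_0,\ldots,k_r]=g^{(r)}(\xi)/r!>0$ finishes immediately. Your worries about unequally spaced nodes and a nonnegative basis-change matrix evaporate once you see this; as written, though, that step is left as a hope rather than an argument.
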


\begin{proof}
Observe that each $N_{m,r}$ is a linear combination of the $\left\lfloor\frac m2\right\rfloor +1$ polynomials
\[\left\{\binom Px\binom Q{m-x}p^xq^{m-x}+\binom P{m-x}\binom Qxp^{m-x}q^x : 0\leq x\leq\frac m2\right\},\]
with nonzero coefficient if and only if $x\geq r$. Thus, $\{N_{m,r}:0\leq r\leq m/2\}$ is a basis for the space of all linear combinations of these polynomials, which includes the $K_m$-density
\[d_{K_m}(R(p,P;q,Q))=m!\cdot \sum_{x+y=m}\binom{P}{x}\binom{Q}{y}p^xq^y2^{xy-\binom{m}{2}}
= \frac{m!}{2^{\binom{m}{2}}}\cdot \sum_{x+y=m}\binom{P}{x}\binom{Q}{y}p^xq^y2^{xy}.
\]
It follows that there exist real numbers $c_{r}$ such that
\begin{align*}
  d_{K_m}(R(p,P;q,Q)) =\sum_{r=0}^{\lfloor m/2\rfloor} c_{r}N_{m,r}(p,P;q,Q).
\end{align*}
Moreover, by setting the coefficients of the $r$th term equal to each other, we conclude that
\begin{equation}\label{eq:Ntod}
   \frac{m!}{2^{\binom{m}{2}}} \cdot 2^{r(m-r)} = \sum_{i=0}^r\frac{r!(m-r)!}{(r-i)!(m-r-i)!}c_i
\end{equation}
for each $r\leq m/2$.

We show that the coefficients $c_{r}$ are positive by induction on $r$. Clearly, $c_0> 0$ by~\eqref{eq:Ntod} when $r=0$. Now, suppose $c_i> 0$ for all $i<r$. By~\eqref{eq:Ntod}, we have
\begin{align*}
  \sum_{i=0}^r\frac{r!(m-r)!}{(r-i)!(m-r-i)!}c_i &= 2^{r(m-r)}\cdot \frac{m!}{2^{\binom{m}{2}}} = \frac{m!}{2^{\binom{m}{2}}} 2^{m-2r+1}2^{(r-1)(m-r+1)}\\
  &=
  2^{m-2r+1}\sum_{i=0}^{r-1}\frac{(r-1)!(m-r+1)!}{(r-1-i)!(m-r+1-i)!}c_i.
\end{align*}
We claim that
\[2^{m-2r+1}\frac{(r-1)!(m-r+1)!}{(r-1-i)!(m-r+1-i)!}>\frac{r!(m-r)!}{(r-i)!(m-r-i)!}\]
for each $i<r$. Indeed, it suffices to show that
\[2^{m-2r+1} \frac{r-i}{m-r+1-i}\ge 1>\frac{r}{m-r+1},\]
which, recalling that $r\leq m/2$, follows from the inequalities $2^{m-2r+1}\geq m-2r+2$ and $\frac{r-i}{m-r+1-i}=\frac{r-i}{m-2r+1+(r-i)}\geq\frac{1}{m-2r+2}$.
Hence,
\begin{align*}
  \sum_{i=0}^r\frac{r!(m-r)!}{(r-i)!(m-r-i)!}c_i = 2^{m-2r+1}\sum_{i=0}^{r-1}\frac{(r-1)!(m-r+1)!}{(r-1-i)!(m-r+1-i)!}c_i
  >\sum_{i=0}^{r-1}\frac{r!(m-r)!}{(r-i)!(m-r-i)!}c_i,
\end{align*}
which implies $c_r>0$.
\end{proof}

Using \cref{P3-2a}, we handle~\ref{sup3} in the case that $(P-1)p\leq Qq$.

\begin{lemma}\label{P3-2}
Let $P,Q$ be positive integers with $P\geq Q+2$ and let $p,q\in(0,1)$ be real numbers such that $Pp+Qq=1$. Set $P'=P-1$, $Q'=Q+1$, and choose $p',q'\in (0,1)$ such that $P'p'=Pp$ and $Q'q'=Qq$. If $(P-1)p\le Qq$ then $d_{K_m}(R(p,P;q,Q))< d_{K_m}(R(p',P';q',Q'))$ for all $2\leq m\leq P+Q$.
\end{lemma}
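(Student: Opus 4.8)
The plan is to use the decomposition from \cref{P3-2a}: since $d_{K_m}(R(p,P;q,Q)) = \sum_{r=0}^{\lfloor m/2\rfloor} c_r N_{m,r}(p,P;q,Q)$ with all $c_r > 0$, and the same holds for $R(p',P';q',Q')$ with the \emph{same} constants $c_r$, it suffices to prove the termwise inequality $N_{m,r}(p,P;q,Q) \le N_{m,r}(p',P';q',Q')$ for every $0 \le r \le \lfloor m/2\rfloor$, with strict inequality for at least one value of $r$ (e.g.\ $r=0$, or whichever $r$ makes the relevant sum nonempty). So the whole argument reduces to understanding a single $N_{m,r}$.

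Next I would exploit the way $N_{m,r}$ was written, namely
\[
N_{m,r}(p,P;q,Q) = r!\binom{P}{r}p^r\cdot r!\binom{Q}{r}q^r \sum_{\substack{x+y=m\\ x,y\ge r}} \binom{P-r}{x-r}p^{x-r}\binom{Q-r}{y-r}q^{y-r}.
\]
Note that the substitution is \emph{mass-preserving} on each side: $P'p' = Pp$ and $Q'q' = Qq$, so the total weight $p$-mass and $q$-mass are unchanged. The prefactor $r!\binom{P}{r}p^r = \frac{P!}{(P-r)!}p^r = P(P-1)\cdots(P-r+1)\,p^r = (Pp)\big((P-1)p\big)\cdots\big((P-r+1)p\big)$ — this is a product of $r$ terms, and moving one vertex from the large part to the small part (i.e.\ $P \mapsto P-1$, $p \mapsto \frac{Pp}{P-1}$) replaces the factors $(Pp), ((P-1)p), \ldots$ by $(Pp), ((P-1)\cdot\frac{Pp}{P-1}) = (Pp), \ldots$; more cleanly, $r!\binom{P'}{r}p'^r = \frac{P'!}{(P'-r)!}\left(\frac{Pp}{P'}\right)^r$, and one checks $\frac{P'!/(P'-r)!}{P'^r} \ge \frac{P!/(P-r)!}{P^r}$ since the left side equals $\prod_{i=1}^{r-1}(1 - i/P')$ and the right $\prod_{i=1}^{r-1}(1-i/P)$ with $P' = P-1 < P$. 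So the $P$-prefactor strictly \emph{decreases} under the move while the $Q$-prefactor increases; the point will be that the hypothesis $(P-1)p \le Qq$ (equivalently $P' p \le Qq$, and after rescaling a comparison between the new weights) forces the increase to dominate. I would similarly rewrite the inner sum $\sum_{x+y=m} \binom{P-r}{x-r}p^{x-r}\binom{Q-r}{y-r}q^{y-r}$ as a coefficient extraction — it is the coefficient of $z^{m-2r}$ in $(1 + pz)^{P-r}(1+qz)^{Q-r}$ (up to the binomial bookkeeping), or handle it directly by pairing the term $(x,y)$ with $(y,x)$-type terms as in the proof of \cref{P3-1}.

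The cleanest route is probably to interpolate: define $f(t)$ to be $N_{m,r}$ evaluated at part sizes $P-1, Q+1$ with $p$-mass $Pp$, $q$-mass $Qq$ held fixed, versus at $P, Q$; more ambitiously, treat $P$ as a continuous parameter by replacing $\binom{P}{x} = \frac{P(P-1)\cdots(P-x+1)}{x!}$ with the polynomial in $P$ it literally is, set $g(\theta) = N_{m,r}$ with part "sizes" $P-\theta$ and $Q+\theta$ and masses $Pp, Qq$ fixed, and show $g'(\theta) \ge 0$ on $[0,1]$ under the hypothesis. The derivative computation splits into the prefactor part (handled above, monotone) and the inner-sum part; the inner sum, being a coefficient of a product of the form $(1 + \frac{Pp}{P-\theta} z)^{P-\theta-r}(1+\frac{Qq}{Q+\theta}z)^{Q+\theta-r}$, has the feature that shifting $\theta$ trades exponent for per-factor weight in a controlled way. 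I expect the \textbf{main obstacle} to be precisely this inner-sum monotonicity: the prefactor behaves well, but one must verify that the hypothesis $(P-1)p \le Qq$ — which is exactly the statement that \emph{after} the move the large part no longer dominates in weighted degree — is strong enough to control the combinatorial sum, and not merely the leading prefactor. A fallback if the continuous interpolation is messy is to do it in one discrete step, expanding $N_{m,r}(p',P';q',Q') - N_{m,r}(p,P;q,Q)$ and grouping terms $(x,y)$ so that each group is manifestly nonnegative using $(P-1)p \le Qq$, mirroring the $M(x,y)$ versus $M(y,x)$ bookkeeping of \cref{P3-1}; strictness then comes from the $r=0$ term, which is just $d_{K_m}$ of a two-vertex-ish weighted graph and is strictly increased because moving mass toward equality strictly increases the symmetric product (Lemma~\ref{maclaurin}).
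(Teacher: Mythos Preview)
Your overall framework matches the paper exactly: reduce via \cref{P3-2a} to showing $N_{m,r}(p,P;q,Q)\le N_{m,r}(p',P';q',Q')$ term by term, then split each $N_{m,r}$ into a prefactor and an inner sum. Two corrections on the prefactor: (i) the inequality you wrote is backwards (with $P'<P$ the product $\prod_{i}(1-i/P')$ is \emph{smaller}), though your conclusion that the $P$-prefactor decreases is right; (ii) the product of the two prefactors increases simply because $(1-\tfrac{i}{P})(1-\tfrac{i}{Q})<(1-\tfrac{i}{P'})(1-\tfrac{i}{Q'})$, which follows from $P'Q'>PQ$, i.e.\ from $P\ge Q+2$ alone --- the hypothesis $(P-1)p\le Qq$ plays no role here. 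Relatedly, the paper gets strictness from the prefactor at $r\ge 1$, not from $r=0$; your $r=0$ argument via Maclaurin would require checking a majorization relation between the two weight tuples that you have not verified.

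The genuine gap is the inner sum, which you correctly flag as the obstacle but do not close. The $(x,y)\leftrightarrow(y,x)$ pairing from \cref{P3-1} does not adapt here, because in that lemma only one vertex moved while all weights stayed fixed; here both the part sizes and the individual weights $p,q$ change simultaneously, so the ratios $M(x,y)/M(y,x)$ no longer telescope cleanly. The continuous-interpolation idea is plausible but you would still need to control the derivative of a coefficient of $(1+\tfrac{Pp}{P-\theta}z)^{P-\theta-r}(1+\tfrac{Qq}{Q+\theta}z)^{Q+\theta-r}$, which is not obviously signed. The paper's actual argument is the missing idea: recognize the inner sum as the elementary symmetric polynomial $e_{m-2r}$ evaluated at the tuple $X=(p,\ldots,p,q,\ldots,q)$ with $P-r$ and $Q-r$ repetitions, versus $X'=(p',\ldots,p',q',\ldots,q')$ with $P'-r$ and $Q'-r$ repetitions. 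The hypothesis $(P-1)p\le Qq$ is used precisely here to show $p\le\min(p',q')\le\max(p',q')<q$ and that $\mathrm{sum}(X)<\mathrm{sum}(X')$ when $r>0$. One then smooths $X$ by repeatedly pushing a too-small entry up and a too-large entry down toward the interval $[\min(p',q'),\max(p',q')]$ --- this never decreases $e_{m-2r}$ --- and finishes with Maclaurin's inequality to compare the resulting tuple to $X'$.
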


\begin{proof}
Set $R=R(p,P;q,Q)$ and $R'=R(p',P';q',Q')$, and fix $m$ with $2\leq m\leq P+Q$. For convenience, we write $N_{m,r}(R)=N_{m,r}(p,P;q,Q)$ and $N_{m,r}(R')=N_{m,r}(p',P';q',Q')$.

By \cref{P3-2a}, it suffices to compare $N_{m,r}(R)$ and $ N_{m,r}(R')$. We begin with some preliminary inequalities. Let $p_-=\min\{p',q'\}$ and $p_+=\max\{p',q'\}$.

\begin{claim}\label{clm:pq} We have the following inequalities.
\begin{enumerate}[label=(\arabic*)]
	\item For any $0<r \le |Q|$, we have $(1-\frac{r}{P})(1-\frac{r}{Q})< (1-\frac{r}{P'})(1-\frac{r}{Q'})$.
	\item $p\leq p_-\leq p_+<q$.
	\item For any $0<r\leq|Q|$, we have $(P-r)p+(Q-r)q<(P'-r)p'+(Q'-r)q'$.
\end{enumerate}
\end{claim}

\begin{poc}
(1) Because $P\geq Q+2$, we have
\begin{eqnarray*}
    \Big(1-\frac{r}{P'}\Big)\Big(1-\frac{r}{Q'}\Big)-\Big(1-\frac{r}{P}\Big)\Big(1-\frac{r}{Q}\Big)&=&\Big(1+\frac{r^2-(P+Q)r}{(P-1)(Q+1)}\Big)-\Big(1+\frac{r^2-(P+Q)r}{PQ}\Big)\\
    &=&\Big(r^2-(P+Q)r\Big)\Big(\frac{1}{PQ+P-1-Q}-\frac{1}{PQ}\Big)> 0.
\end{eqnarray*}

(2) Using the relation $(P-1)p\leq Qq$, it follows that $p<\frac{P}{P-1}p=p'\leq\frac{PQ}{(P-1)^2}q<q$ and that $q>\frac{Qq}{Q+1}=q'\geq\frac{(P-1)p}{Q+1}\geq p$.

(3) First, observe that
\[
p+q-p'-q'
=p+q-\frac{P}{P-1}p-\frac{Q}{Q+1}q
=\frac{q}{Q+1}-\frac{p}{P-1}>0
\]
because $p\leq Qq/(P-1)<q$. Thus,
\[(P'-r)p'+(Q'-r)q'-(P-r)p-(Q-r)q
=(P'p'+Q'q'-Pp-Qq)+r(p+q-p'-q')>0
\]
if $r>0$.
\end{poc}

We now compare $N_{m,r}(R)$ and $N_{m,r}(R')$.

\begin{claim}\label{clm:NmrR}
We have $N_{m,r}(R)\leq N_{m,r}(R')$ for any $0\leq r\leq\lfloor m/2\rfloor$. Moreover, the inequality is strict when $r>0$.	
\end{claim}

\begin{poc}
By \cref{clm:pq}(1),
\begin{align*}
r!\cdot\binom{P}{r}p^r \cdot r!\cdot\binom{Q}{r}q^r
&=(Pp)^r(Qq)^r\prod_{i=0}^{r-1}\left(1-\frac iP\right)\left(1-\frac iQ\right)
\\&\leq (P'p')^r(Q'q')^r\prod_{i=0}^{r-1}\left(1-\frac i{P'}\right)\left(1-\frac i{Q'}\right)
=r!\cdot\binom{P'}{r}(p')^r \cdot r!\cdot\binom{Q'}{r}(q')^r,
\end{align*}
with equality only if $r=0$.
It remains to show that
\begin{equation}\label{eq:symm}
	\sum_{\substack{x+y=m-2r\\x, y\ge 0}} \binom{P-r}{x}p^{x}\binom{Q-r}{y}q^{y}\leq
	\sum_{\substack{x+y=m-2r\\x, y\ge 0}} \binom{P'-r}{x}(p')^{x}\binom{Q'-r}{y}(q')^{y}.
\end{equation}
Set $X = (\underbrace{p,p ,\ldots, p }_{P-r}, \underbrace{q,q ,\ldots,q}_{Q-r})$  and $X' = (\underbrace{p',p' ,\ldots, p' }_{P-r-1}, \underbrace{q',q' ,\ldots,q'}_{Q-r+1})$. Letting $\sigma$ be the $(m-2r)$th elementary symmetric function
\[
\sigma(y_1,\ldots,y_{P+Q-2r}):=\sum_{1\leq i_1<\cdots<i_{m-2r}\leq P+Q-2r}y_{i_1}\cdots y_{i_{m-2r}},
\]
we may rewrite~\eqref{eq:symm} as $\sigma(X)\leq\sigma(X')$.

Let $Y$ be the $(P+Q-2r)$-tuple obtained via the following process. Set $Y=X$ initially and repeat the following transformation, which will never decrease $\sigma(Y)$.
\begin{enumerate}
\item[$(*)$]  If there exist indices $i,j$ such that $Y_i<p_-$ and $Y_j>p_+$, set $\eps=\min\{p_--Y_i,~Y_j-p_+\}$, and replace $Y_i$ and $Y_j$ with $Y_i+\eps$ and $Y_j-\eps$, respectively.
\end{enumerate}
\noindent Each iteration increases the number of coordinates equal to $p_-$ or $p_+$, so the process will terminate in at most $P+Q-2r$ steps. Moreover, recalling that $p\leq p_-\leq p_+<q$, we observe that the final tuple $Y$ either takes the form
\begin{align*}
Y&=(Y_1,\ldots,Y_{P-r},p_+,\ldots,p_+)\text{ with }Y_i\leq p_-\text{ for all $i\leq P-r$, or}\tag{Case 1}
\\Y&=(p_-,\ldots,p_-,Y_{P-r+1},\ldots,Y_{P+Q-2r})\text{ with }Y_i\geq p_+\text{ for all $i>P-r$.}\tag{Case 2}
\end{align*}
Let $X'' = (p_-,\ldots,p_-,p_+,\ldots,p_+)$ be the result of sorting $X'$ in increasing order, and let $k\in\{P-r-1,Q-r+1\}$ be the number of occurrences of $p_-$. In case 1, we have $Y_i\leq X''_i$ for each $i$, implying $\sigma(Y)\leq \sigma(X'')$. In case 2, let $y$ be the average value of $\{Y_{k+1},\ldots,Y_{P+Q-2r}\}$ and let $Y'=(p_-,\ldots,p_-,y,\ldots,y)$ be the result of replacing all but the first $k$ terms of $Y$ with $y$. By \cref{maclaurin}, $\sigma(Y)\leq\sigma(Y')$. Moreover, $\mathrm{sum}(Y')=\mathrm{sum}(X)<\mathrm{sum}(X'')$ by \cref{clm:pq}(3). Because $X''$ is obtained from $Y'$ by replacing each $y$ with $p_+$, it follows that $y<p_+$ and $\sigma(Y')<\sigma(X'')$. Thus, we have $\sigma(X)\leq\sigma(Y)\leq \sigma(X'')=\sigma(X')$ in both cases, completing the proof that $N_{m,r}(R)\leq N_{m,r}(R')$.
\end{poc}

Combining \cref{P3-2a} and \cref{clm:NmrR}, we have that
\[
d_{K_m}(R)=\sum_{r=0}^{\lfloor m/2\rfloor}c_rN_{m,r}(R)<\sum_{r=0}^{\lfloor m/2\rfloor}c_rN_{m,r}(R')=d_{K_m}(R').\qedhere
\]
\end{proof}

Lastly, we turn our attention to~\ref{sup4}. If $P>Q$, this is an immediate consequence of \cref{P3-1}; it remains to handle the $P=Q$ case.

\begin{lemma}\label{P3-3}
Let $P$ be a positive integer and $p,q\in(0,1)$ real numbers such that $pP+qP=1$. If $p\neq q$, then there exists a weighted graph $R'$ with $2P$ vertices such that $R'_{>\frac 12}$ is bipartite and $d_{K_m}(R(p,P;q,P))< d_{K_m}(R')$ for all $2\leq m\leq 2P$.
\end{lemma}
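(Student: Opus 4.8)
Since $p\neq q$, the AM--GM inequality gives $(pP)(qP)<\bigl(\tfrac{pP+qP}{2}\bigr)^2=\tfrac14$. The plan is to compare $R=R(p,P;q,P)$ with the \emph{balanced} weighted graph $R'=R\bigl(\tfrac{1}{2P},P;\tfrac{1}{2P},P\bigr)$ on $2P$ vertices, whose $>\tfrac12$-subgraph is the complete bipartite graph $K_{P,P}$ and is therefore bipartite. Fix $m$ with $2\le m\le 2P$. By \cref{P3-2a} there are constants $c_0,\dots,c_{\lfloor m/2\rfloor}>0$ depending only on $m$ with $d_{K_m}(R(p,P;q,P))=\sum_{r}c_rN_{m,r}(p,P;q,P)$, and the same identity with the same $c_r$ holds for $R'$. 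Hence it suffices to prove $N_{m,r}(R)\le N_{m,r}(R')$ for every $r$, with strict inequality for $r=0$.

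The heart of the argument is a clean formula for $N_{m,r}(p,P;q,P)$ in terms of the single parameter $u:=(pP)(qP)$. Writing $a=pP$, $b=qP$ (so $a+b=1$ and $ab=u$), substituting $p=a/P$, $q=b/P$ into the defining formula for $N_{m,r}$, and using the factorization $(1+az)(1+bz)=1+z+uz^2$, one obtains
\[
N_{m,r}(p,P;q,P)=\frac{A_r^2}{P^{\,m-2r}}\,u^r\,[z^{m-2r}]\,(1+z+uz^2)^{P-r},
\]
where $A_r=\prod_{i=0}^{r-1}\bigl(1-\tfrac iP\bigr)>0$ depends only on $P$ and $r$ and $[z^{\ell}]$ denotes coefficient extraction. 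Expanding $(1+z+uz^2)^{P-r}$ by the multinomial theorem shows that $[z^{m-2r}](1+z+uz^2)^{P-r}$, and hence $h_r(u):=u^r[z^{m-2r}](1+z+uz^2)^{P-r}$, is a polynomial in $u$ with nonnegative coefficients, with $N_{m,r}=\tfrac{A_r^2}{P^{m-2r}}h_r(u)$. Since $R$ has parameter $u<\tfrac14$ while $R'$ has parameter $\tfrac14$, the monotonicity of $h_r$ on $[0,\tfrac14]$ immediately yields $N_{m,r}(R)\le N_{m,r}(R')$.

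For strictness I examine the $r=0$ term $h_0(u)=[z^m](1+z+uz^2)^P$, whose coefficient of $u^k$ (via the multinomial expansion) is $\tfrac{P!}{(P-m+k)!\,(m-2k)!\,k!}$, summed over $k$ with $m-2k\ge0$ and $P-m+k\ge0$. Taking $k=\max(1,\,m-P)$ one checks, using $m\ge2$ and $m\le2P$, that this $k$ lies in the admissible range, so $h_0$ has a nonzero coefficient on a positive power of $u$; thus $h_0$ is strictly increasing on $[0,\tfrac14]$ and $N_{m,0}(R)<N_{m,0}(R')$. Combining this with $c_0>0$, $c_r>0$, and $N_{m,r}(R)\le N_{m,r}(R')$ for $r\ge1$ gives $d_{K_m}(R)<d_{K_m}(R')$, as required.

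The main obstacle is obtaining the factorization and massaging $N_{m,r}$ into a polynomial in $u$ alone --- once this is done, everything else is routine, since a polynomial with nonnegative coefficients is automatically nondecreasing on $[0,\infty)$. The one point requiring genuine care is confirming that the $r=0$ polynomial is non-constant: this is precisely where the hypothesis $m\ge2$ enters (for $m=1$ the $K_m$-density is identically $1$ and the statement fails), and it also uses $m\le2P$ to guarantee that the relevant multinomial coefficients do not vanish.
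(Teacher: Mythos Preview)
Your proof is correct and takes a genuinely different route from the paper's. The paper chooses $R'$ to differ from $R$ at only two vertices --- one vertex $u$ of weight $p$ and one vertex $v$ of weight $q$ are both reassigned weight $(p+q)/2$ --- and then splits $d_{K_m}(R')-d_{K_m}(R)$ into four pieces according to which of $u,v$ participate in the copy of $K_m$, handling each piece by an explicit term-pairing argument. You instead jump straight to the fully balanced graph $R'=R(\tfrac{1}{2P},P;\tfrac{1}{2P},P)$ and invoke \cref{P3-2a} to reduce to comparing the $N_{m,r}$'s. Your key observation --- that when $P=Q$ the quantity $N_{m,r}(p,P;q,P)$ depends on $(p,q)$ only through the single parameter $u=(pP)(qP)$, via the generating-function identity $(1+pz)(1+qz)=(1+\tfrac zP+\tfrac{u}{P^2}z^2)$ --- is cleaner and more conceptual than the paper's direct computation, and it yields something slightly stronger: it identifies the global maximizer among all $R(p,P;q,P)$ with $pP+qP=1$, rather than just exhibiting one strict improvement. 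The trade-off is that your argument leans on \cref{P3-2a}, whereas the paper's proof of this particular lemma is self-contained.
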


\begin{proof}
Set $R=R(p,P;q,P)$ and let $u$ and $v$ be vertices in $R$ with weights $p$ and $q$, respectively. Let $R'$ be the graph obtained from $R$ by setting the weights of both $u$ and $v$ to $\frac{p+q}2=\frac 1{2P}$.

Observe that $d_{K_m}(R-\{u,v\})=d_{K_m}(R'-\{u,v\})$. Set
\[f(x,y)=m!\binom{P-1}{x}\binom{P-1}{y}\left(\frac{1}{2}\right)^{\binom{x}{2}+\binom{y}{2}}.\]
We have that
\begin{align*}
&d_{K_m}((R'-v),u)+d_{K_m}((R'-u),v)-d_{K_m}((R-v),u)-d_{K_m}((R-u),v)
\\&=\sum_{x+y=m-1}f(x,y)p^xq^y
	\left[\frac{p+q}{2}\left(\left(\frac{1}{2}\right)^x+\left(\frac{1}{2}\right)^y\right)-\left(p\left(\frac{1}{2}\right)^x+q\left(\frac{1}{2}\right)^y\right)\right]
\\&=\sum_{x+y=m-1}f(x,y)p^xq^y\left(\frac{q-p}2\right)
	\left(\left(\frac{1}{2}\right)^x-\left(\frac{1}{2}\right)^y\right)
\\&=\sum_{\substack{x+y=m-1,\\x<y}}f(x,y)
	p^xq^x\left(q^{y-x}-p^{y-x}\right)
	\left(\frac{q-p}2\right)
	\left(\left(\frac{1}{2}\right)^x-\left(\frac{1}{2}\right)^y\right)>0
\end{align*}
because $q-p$ and $q^{y-x}-p^{y-x}$ both have the same sign.
Additionally,
\begin{align*}
d_{K_m}(R',\{u,v\})-d_{K_m}(R,\{u,v\})
=\sum_{x+y=m-2}f(x,y)p^xq^y\left(\frac 12\right)^{x+y}
	\left[\left(\frac{p+q}2\right)^2-pq\right]
	>0.
\end{align*}
Combining the preceding inequalities, we conclude that
\begin{align*}
d_{K_m}(R) &= d_{K_m}(R-\{u,v\})+d_{K_m}((R-v),u)+d_{K_m}((R-u),v)+d_{K_m}(R,\{u,v\})
\\&< d_{K_m}(R'-\{u,v\})+d_{K_m}((R'-v),u)+d_{K_m}((R'-u),v)+d_{K_m}(R',\{u,v\})
=d_{K_m}(R')
\end{align*}
for any integer $2\leq m\leq 2P$.
\end{proof}

\subsection{Proof of~\ref{sup3} and~\ref{sup4} for all $a$}

Using the results of the prior subsection, we prove~\ref{sup3} and~\ref{sup4} for all $a$. Suppose $R$ is an extremal $\mathcal K_t$-free weighted graph satisfying~\ref{sup1} and~\ref{sup2} with parts $B_1,\ldots,B_a$.
Given indices $i\neq j$, we may regard $R[B_i\cup B_j]$ as a scaled-down version of some weighted graph $R_0=R(p,|B_i|;q,|B_j|)$, where the weights of vertices in $B_i$ and $B_j$ are $\alpha p$ and $\alpha q$ respectively, for some $\alpha\leq 1$.

Without loss of generality, suppose $|B_i|\geq |B_j|$. We claim that $|B_i|\leq|B_j|+1$ and that $p\leq q$. Indeed, if $|B_i|\geq|B_j|+2$ then \cref{P3-1} or \cref{P3-2} yields a weighted graph $R'_0$ on $|B_i|+|B_j|$ vertices such that $(R'_0)_{>\frac 12}$ is bipartite and $d_{K_m}(R'_0)> d_{K_m}(R_0)$ for all $2\leq m\leq |B_i|+|B_j|$. Note that $d_{K_m}(R_0)=d_{K_m}(R'_0)$ for all other $m$: this value is 1 if $m=1$ and 0 if $m>|B_i|+|B_j|$. If $p>q$, then \cref{P3-1} (if $|B_i|>|B_j|$) or \cref{P3-3} (if $|B_i|=|B_j|$) provides a weighted graph $R'_0$ with the same properties.

Let $R'$ be the weighted graph obtained from $R$ by replacing $R[B_i\cup B_j]$ with a scaled-down copy of $R'_0$. That is, the vertex weights of $R'[B_i\cup B_j]$ are those of $R'_0$ multiplied by $\alpha$, and the edge weights of $R'[B_i\cup B_j]$ are exactly those of $R'_0$.
We verify that $R'$ is $\mathcal K_t$-free. Suppose $(S_1,S_2)$ is a weighted clique in $R'$. Because $\left(R'_0\right)_{>1/2}$ is bipartite, $S_2$ contains at most two vertices of $B_i\cup B_j$; additionally, $S_2$ contains at most one vertex from each other part $B_k$. Hence $|S_1|+|S_2|\leq|V(R)|+a=b+a=t-1$, so $R'$ is $\mathcal K_t$-free.
However, because
\[d_{K_m}(R[B_i\cup B_j])=\alpha^m d_{K_m}(R_0)
\quad\text{and}\quad d_{K_m}(R'[B_i\cup B_j])=\alpha^m d_{K_m}(R'_0),\]
we have that
\begin{align*}
d_{K_s}(R)
&=\sum_{m=0}^s\binom sm \al^md_{K_m}(R_0) \cdot d_{K_{s-m}}(R-(B_i\cup B_j))
\\&<\sum_{m=0}^s\binom sm \al^md_{K_m}(R'_0) \cdot d_{K_{s-m}}(R-(B_i\cup B_j))
=d_{K_s}(R').
\end{align*}
Thus, if the parts $(B_i,B_j)$ do not satisfy~\ref{sup3} or~\ref{sup4} then $d_{K_s}(R)<d_{K_s}(R')$, contradicting the extremality of $R$.

\subsection{Proof of~\ref{sup5}}

Suppose that $R$ is an extremal $\mathcal K_t$-free weighted graph satisfying~\ref{sup1} and thus~\ref{sup2}--\ref{sup4} with parts $B_1,\ldots,B_a$. Without loss of generality, suppose $B_1$ has maximal cardinality among all parts $B_i$. We assume that $r=|B_1|$ satisfies $r\geq s+1$ (if $a=1$) or $r\geq s$ (if $a\geq 2$) and derive a contradiction.

Let $p$ be the weight of each vertex in $B_1$; by~\ref{sup4}, it follows that all vertices of $R$ have weight at least $p$. Let $R'$ be the graph obtained from $R$ by replacing two vertices $u,v\in B_1$ with a new vertex $v'$ of weight $2p$ and setting $w(v',u')=1$ for any $u'\in V(R')\setminus\{v'\}$. We observe that $R'$ is $\mathcal K_t$-free. Indeed, if $(S_1,S_2)$ is a weighted clique configuration in $R'$ then $S_2$ may contain at most one vertex from each of the sets $\{v'\},B_1-\{u,v\},B_2,\ldots,B_a$. It follows that $|S_1|+|S_2|\leq|V(R')|+(a+1)=|V(R)|+a$, which is $t-1$ by~\ref{sup2}. To conclude the proof of ~\ref{sup5}, we show that $d_{K_s}(R)<d_{K_s}(R')$ if $R$ does not satisfy~\ref{sup5}, contradicting the extremality of $R$.

Set $B'=(B_1-\{u,v\})\cup\{v'\}\subseteq V(R')$ and for each $0\leq m\leq s$ set
\begin{align*}
f_m&=\frac{d_{K_m}(R[B_1])}{m!},
\quad g_m=\frac{d_{K_m}(R'[B'])}{m!},
\quad\text{and}\quad
h_m=\frac{d_{K_m}(R-B_1)}{m!}=\frac{d_{K_m}(R'-B'))}{m!}.
\end{align*}
Recalling that $|B_1|=r\geq s \geq 3$, we have
\begin{align*}
f_m&=\binom rmp^m\left(\frac 12\right)^{\binom m2},\text{ and}
\\g_m&=\binom{r-2}mp^m\left(\frac 12\right)^{\binom m2}+\binom{r-2}{m-1}(2p)p^{m-1}\left(\frac 12\right)^{\binom{m-1}2}
\\&=\left[\binom{r-2}m+2^m\binom{r-2}{m-1}\right]p^m\left(\frac 12\right)^{\binom m2}.
\end{align*}
For any $1\leq m\leq r-1$, observe that
\begin{align*}
\binom{r-2}m+2^m\binom{r-2}{m-1}-\binom rm
&=\binom{r-1}m+\left(2^m-1\right)\binom{r-2}{m-1}-\left(\binom{r-1}{m}+\binom {r-1}{m-1}\right)
\\&=\left(2^m-1-\frac{r-1}{r-m}\right)\binom{r-2}{m-1}
\geq\left(2^m-1-m\right)\binom{r-2}{m-1}\geq 0,
\end{align*}
with equality if and only if $m=1$. Recalling that $f_0=g_0=1$, we conclude that $f_m\leq g_m$ for all $0\leq m\leq r-1$ with equality if and only if $m<2$. If $r\geq s+1$, then this inequality holds for all $2\leq m\leq s\leq r-1$, and we conclude that
\begin{align*}
d_{K_s}(R)&=\sum_{m=0}^s\binom smd_{K_m}(R[B_1])d_{K_{s-m}}(R-B_1)
=\sum_{m=0}^ss!f_mh_{s-m}
\\&<\sum_{m=0}^ss!g_mh_{s-m}
=\sum_{m=0}^s\binom sm d_{K_m}(R'[B'])d_{K_{s-m}}(R'-B')=d_{K_s}(R').
\end{align*}
Now, suppose $r=s$ and $a\geq 2$. We observe that $h_1=\sum_{v\in V(R)-B_1}w(v)\geq|V(R)-B_1|p\geq p$, so
\begin{align*}
   f_{s-1}h_1+f_sh_0 &= sp^{s-1}\left(\frac{1}{2}\right)^{\binom{s-1}{2}}h_1+p^s\left(\frac{1}{2}\right)^{\binom{s}{2}} 
   \le \left(s\cdot \left(\frac{1}{2}\right)^{s-2}+\left(\frac{1}{2}\right)^{2s-3}\right) p^{s-1}\left(\frac{1}{2}\right)^{\binom{s-2}{2}}h_1 \\
   & < (2p)p^{s-2}\left(\frac{1}{2}\right)^{\binom{s-2}{2}}h_1 = g_{s-1}h_1\leq g_{s-1}h_1+g_sh_0.
\end{align*}
Once again, we conclude
\[
d_{K_s}(R)=\sum_{m=0}^ss!f_mh_{s-m}<\sum_{m=0}^ss!g_mh_{s-m}=d_{K_s}(R').
\]
Thus, if $R$ does not satisfy~\ref{sup5} then $d_{K_s}(R')>d_{K_s}(R)$, contradicting the extremality of $R$.
This completes the proof of Theorem~\ref{thm:weight:main}.

\section{Eventual periodicity and counterexamples to Conjecture~\ref{conj}}\label{section:5}

In this section, we prove \cref{main 2}, providing conditions under which \cref{conj} does and does not hold. Applying \cref{thm:reduction-to-R}, we may reduce this to a problem about weighted graphs.

Say a weighted graph $R$ \emph{admits a $(b,a)$-partition} if it has $b$ vertices and satisfies the five conditions \ref{sup1}--\ref{sup5} of \cref{thm:weight:main}, with $a$ being the number of parts in \ref{sup2}.
\cref{main 1} shows that $\varrho_s(K_t)$ is the maximum $K_s$-density of a weighted graph $R$ admitting a $(b,a)$-partition with $a=t-1-b$ parts for some $b<t$; such $R$ are inherently $\C K_t$-free. From \ref{sup2}, we have $b\geq s$; additionally, because $R$ cannot have fewer vertices than parts, we have $b\geq \lfloor t/2\rfloor$. \cref{conj} hypothesizes that the optimal density is attained when $b$ matches one of these lower bounds.

\begin{conj}[\cref{conj}, rephrased in terms of weighted graphs]\label{conj-wg}
	Fix integers $s,t$ with $3\leq s\leq t-2$. The maximum $K_s$-density of a weighted graph admitting a $(b,t-1-b)$-partition is attained when $b=\max\{s,\lfloor t/2\rfloor\}$.
\end{conj}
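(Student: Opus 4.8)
The task set by \cref{conj-wg} is to decide exactly when it holds, which is the content of \cref{main 2}; I describe how I would prove \cref{main 2} working entirely on the weighted side. By \cref{thm:reduction-to-R} and \cref{thm:weight:main}, for $3\le s\le t-2$ we have $\varrho_s(K_t)=\pi_s(\C K_t)=\max_b f(b)$, where $f(b)$ is the maximum $K_s$-density of a weighted graph admitting a $(b,t-1-b)$-partition and $b$ ranges over those integers with $b^\star:=\max\{s,\lfloor t/2\rfloor\}\le b\le t-2$ for which such a partition exists; by~\ref{sup5} the latter forces every part to have size at most $s-1$ (equivalently $b\le (s-1)(t-1-b)$) unless $a=1$, i.e.\ unless $t=s+2$ and $b=s$. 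So $b^\star$ is the \emph{smallest} admissible $b$, and \cref{conj-wg} asserts $\max_b f(b)=f(b^\star)$. The first step is to make $f(b)$ explicit: writing $a=t-1-b$, properties \ref{sup2}--\ref{sup4} say a $(b,a)$-partition has some $k$ parts of size $\lceil b/a\rceil$ with common per-vertex weight $p$ and $a-k$ parts of size $\lfloor b/a\rfloor$ with common per-vertex weight $q\ge p$, subject to $k\lceil b/a\rceil p+(a-k)\lfloor b/a\rfloor q=1$; distributing $s$ vertices among the parts and charging a factor $\tfrac12$ to each within-part pair turns $d_{K_s}$ into an explicit polynomial in $(p,q)$, and $f(b)$ is its maximum over these one or two parameters. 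In particular, for $t$ odd and $b=(t-1)/2$ all parts are singletons, so by \cref{maclaurin} the optimum is the uniform Tur\'an graph $T_{(t-1)/2}$ and $f(b^\star)=\prod_{i=0}^{s-1}\bigl(1-\tfrac{2i}{t-1}\bigr)$; for $t$ even the only change is one part of size $2$.

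The positive cases then split as follows. For $t=s+2$ the claim is vacuous since $b^\star=s=t-2$ is the unique admissible value. For $s\in\{3,4\}$ every admissible part has size at most $s-1\le 3$, so $f(b)$ is for each $b$ the maximum of a bounded-degree polynomial in at most two variables; after a normalization this reduces the family over all $t$ to a finite case analysis (the $s=3$ instance being \cite{balogh2017on}). The substantive statement is the regime $t>s^2(s-1)/2+s$, where $b^\star=\lfloor t/2\rfloor>s$ and I would show $f(b)<f(b^\star)$ for every admissible $b>b^\star$ by expanding $f$ near $b^\star$. Dropping all $\tfrac12$-factors and applying \cref{maclaurin} gives only the crude bound $f(b)\le\prod_{i=0}^{s-1}(1-i/b)$, which increases in $b$ and hence is useless by itself; the point is that it is far from tight, because each $s$-set placing two vertices in one part loses a factor $\tfrac12$. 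Keeping the dominant such correction, a heuristic count shows that moving from $b^\star$ to $b^\star+j$ improves the leading Tur\'an factor by $\Theta(s^2 j/t^2)$ but incurs a $\tfrac12$-penalty of the same order $\Theta(s^2 j/t^2)$ with a strictly larger constant, so $f(b^\star)$ wins. Converting this into a proof requires an honest upper bound on $f(b)$ that captures the $\Theta(1/t^2)$ correction uniformly in $b$ — the natural route is to bound $d_{K_s}(R)$ by the contribution of $s$-sets meeting each part at most once plus the contribution of those meeting exactly one part twice, and then to bound the remaining ``high-repetition'' terms crudely — and to check that the error is dominated by the $\Theta(s^2 j/t^2)$ margin precisely when $t>s^2(s-1)/2+s$; this inequality should be exactly what makes the leading-order comparison rigorous, including for the largest admissible $b$ (where parts reach size $s-1$ and the penalty is heaviest).

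For the counterexamples ($2s\le t\le 2.08s$, $s$ large) the balance is reversed: now $b^\star=\lfloor t/2\rfloor\in[s,1.04s]$ is small, parts may be as large as $s-1$, and the $\Theta(1/t^2)$-type corrections no longer dominate cleanly. Here I would simply exhibit one admissible $b_0>b^\star$ with $f(b_0)>f(b^\star)$ by direct computation: a natural candidate is the value with $a_0=t-1-b_0$ for which a $(b_0,a_0)$-partition has all parts of size $2$ (so $b_0=\lceil 2(t-1)/3\rceil$, valid since $s\ge 3$ and $t\le 2.08s$), or some similarly structured small value; by~\ref{sup4} all vertex weights are then equal, so $f(b_0)$ is the maximum of a single-variable polynomial which one compares explicitly with $f(b^\star)$ (itself a one- or two-variable optimum). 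Pinning down the precise constant $2.08$ amounts to a finite but somewhat delicate estimate determining for which ratios $t/s$ the explicit inequality $f(b_0)>f(b^\star)$ survives.

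The main obstacle is the positive direction in the regime $t>s^2(s-1)/2+s$: one must compare $f(b)$ with $f(b^\star)$ \emph{uniformly over all admissible $b$}, not merely $b=b^\star+1$, and control the lower-order terms sharply enough that the threshold $s^2(s-1)/2+s$ emerges rather than some cruder bound. The lemmas of \cref{sec:extremal-weighted} (\ref{sup1}--\ref{sup5}) have already reduced everything to a bounded low-dimensional polynomial optimization, so the remaining work is entirely in carrying out that optimization carefully and in packaging the error estimates; by contrast the counterexample direction is comparatively routine once the right $b_0$ has been identified.
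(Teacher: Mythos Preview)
Your outline correctly identifies the overall structure (reduce to a maximization over $b$, handle $t=s+2$ trivially, do the small-$s$ cases by hand, prove periodicity for large $t$, and exhibit counterexamples near $t\approx 2s$), but the key technical step---the periodicity regime $t>s^2(s-1)/2+s$---is handled quite differently in the paper, and your proposed route has a real gap.

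You propose a perturbative comparison: moving from $b^\star$ to $b^\star+j$ improves the ``Tur\'an factor'' $\prod(1-i/b)$ by $\Theta(s^2j/t^2)$ but incurs a $\tfrac12$-penalty of the same order with a larger constant. You yourself flag that this is heuristic and that making it rigorous requires an upper bound on $f(b)$ capturing the $\Theta(1/t^2)$ correction \emph{uniformly} in $b$; you do not supply one, and it is not clear such an argument would produce the exact threshold $s^2(s-1)/2+s$ rather than some cruder polynomial. The paper avoids this entirely with a clean trick: for any $R$ satisfying \ref{sup1}--\ref{sup5}, set $R_0$ (all $\tfrac12$-weights replaced by $0$) and $R_1$ (all replaced by $1$) and observe $d_{K_s}(R)\le\tfrac12(d_{K_s}(R_0)+d_{K_s}(R_1))$. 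Maclaurin then gives $d_{K_s}(R_0)\le g(a)$ and $d_{K_s}(R_1)\le g(b)$ where $g(x)=\prod_{i=0}^{s-1}(1-i/x)$, and one checks $g''(x)<0$ for $x\ge\binom s2$, so Jensen yields $\tfrac12(g(a)+g(b))\le g(\tfrac{a+b}{2})=d_{K_s}(K^w_{(t-1)/2})$. The threshold then falls out of \ref{sup5}: $b\le(s-1)a$ forces $a\ge(t-1)/s\ge\binom s2$ exactly when $t-1\ge s\binom s2$; the even case reduces to the odd case at $t'=t-|B_k|-1$ with $|B_k|\le s-1$, which is where the extra ``$+s$'' comes from. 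You had the $R_1$ half of this (``dropping all $\tfrac12$-factors'') and correctly noted it is useless alone; the missing idea is pairing it with $R_0$ and using concavity.

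For the counterexamples, your choice $b_0\approx 2(t-1)/3$ with all parts of size $2$ is plausible but more work than necessary. The paper takes the minimal step $b_0=b^\star+1$: for $t$ odd, two disjoint $\tfrac12$-edges on an $(r{+}1)$-vertex graph (the four incident vertices given weight $3/(4r)$); for $t$ even, three disjoint $\tfrac12$-edges. The resulting ratio $d_{K_s}(R')/d_{K_s}(K^w_r)$ is then a single explicit product, and the constant $2.08$ drops out of the inequality $\frac{r+1}{r+1-s}\cdot\frac{3^4}{4^5}>1$ (and its analogue for even $t$). Your all-parts-of-size-$2$ construction could also be made to work, but the comparison is messier and you would have to rederive the numerical window.
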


For $t\geq 2s$, \cref{conj-wg} hypothesizes that the extremal $\C K_t$-free weighted graphs follow an alternating pattern as depicted in \cref{tab:my_label}. For odd $t$, the conjectural extremal construction is the \emph{complete balanced weighted graph} $K_{\lfloor t/2\rfloor}^w$, which has $b=\lfloor t/2\rfloor$ vertices with  weight $1/b$ each and has all $\binom b2$ edge weights equal to 1. For even $t$, the conjectural extremal construction has $b=t/2$ vertices divided into one part of size 2 and $b-2$ parts of size 1. That is, all edges have weight $1$ except for one edge of weight $1/2$ within the part of size 2.

The proof of \cref{main 2} is divided into three subsections. In \cref{sec:case 1}, we show that \cref{conj-wg} holds if $t>s^2(s-1)/2 + s + 1$, implying that the extremal constructions do eventually follow the aforementioned alternating pattern. In \cref{sec:case 2}, we show that \cref{conj-wg} holds if $s=3,4$ or if $t=s+2$. Lastly, in \cref{sec:case 3}, we provide counterexamples to \cref{conj-wg} for $s=5$ as well as for any sufficiently large $s$.

\subsection{Eventual Periodicity}\label{sec:case 1}

We first show that \cref{conj-wg} holds when $t$ is sufficiently large as a function of $s$.

\begin{lemma}
Fix integers $s,t$ with $3\leq s\leq t-2$. Suppose $R$ is a weighted graph admitting a $(b,a)$-partition into $B_1\cup\cdots\cup B_a$ for some $a,b$ satisfying $a+b=t-1$.
\begin{enumerate}[label=(\arabic*)]
	\item Suppose $t$ is odd with $t>s^2(s-1)/2$. Set $r=(t-1)/2$ and let $K_r^w$ be the complete balanced weighted graph on $r$ vertices. We have $d_{K_s}(R)\leq d_{K_s}(K_r^w)$ with equality if and only if $R=K_r^w$.
	\item Suppose $t$ is even with $t>s^2(s-1)/2+s$. If $d_{K_s}(R)=\pi_s(\C K_t)$ then $a-1$ of the parts $B_i$ must have cardinality 1, and the last part must have cardinality 2.
\end{enumerate}
\end{lemma}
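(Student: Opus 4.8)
The plan is to classify copies of $K_s$ in $R$ by how they are distributed among the parts $B_1,\dots,B_a$ and to show that the bulk of the $K_s$-density comes from the ``spread'' copies, which use at most one vertex per part. Write $b_i=|B_i|$ and let $p_i$ be the common weight of the vertices of $B_i$, so $\sum_i b_ip_i=1$ and $\sum_i b_i=b=t-1-a$. A copy of $K_s$ occupying $m_i$ vertices of $B_i$ contributes $\bigl(\prod_i\binom{b_i}{m_i}p_i^{m_i}\bigr)2^{-\sum_i\binom{m_i}{2}}$ to the density (each of the $\binom{m_i}{2}$ within-part edges has weight $\tfrac12$ and all other edges weight $1$), so
\[
d_{K_s}(R)=s!\sum_{\substack{m_1+\dots+m_a=s\\ 0\le m_i\le b_i}}\Bigl(\prod_{i=1}^a\binom{b_i}{m_i}p_i^{m_i}\Bigr)2^{-\sum_i\binom{m_i}{2}}=:S_{\mathrm{spr}}(R)+S_{\mathrm{rest}}(R),
\]
where $S_{\mathrm{spr}}$ collects the terms with all $m_i\le 1$.

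For the spread part, $S_{\mathrm{spr}}(R)=s!\sum_{|I|=s}\prod_{i\in I}(b_ip_i)$, which is exactly the $K_s$-density of the complete weighted graph on $a$ vertices with weights $(b_ip_i)_i$. By Maclaurin's inequality (\cref{maclaurin}) applied to the nonnegative reals $b_ip_i$, which sum to $1$, we get $S_{\mathrm{spr}}(R)\le s!\binom{a}{s}a^{-s}$, with equality iff all $b_ip_i$ are equal. Since the parts are nonempty, $a\le b$, i.e.\ $a\le(t-1)/2$; and since $|B_i|\le s-1$ by \ref{sup5} we have $b\le a(s-1)$, which together with the hypothesis forces $a\ge(t-1)/s\ge s(s-1)/2\ge s$. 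As $x\mapsto\binom xs x^{-s}=\prod_{j=0}^{s-1}(1-j/x)$ is increasing for $x\ge s$, it follows that when $t$ is odd $S_{\mathrm{spr}}(R)\le s!\binom{(t-1)/2}{s}\bigl((t-1)/2\bigr)^{-s}=d_{K_s}(K^w_r)$, and when $t$ is even $S_{\mathrm{spr}}(R)\le s!\binom{t/2-1}{s}(t/2-1)^{-s}$; in both cases equality forces $a$ to be as large as possible. When $t$ is odd this means $a=b=(t-1)/2$, so every $B_i$ is a singleton and, by \ref{sup4}, every $p_i$ equals $2/(t-1)$, i.e.\ $R=K^w_r$ and then $S_{\mathrm{rest}}(R)=0$; when $t$ is even it means $a=t/2-1$, whence $b=t/2$ and exactly one part has size $2$.

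It remains to show that $S_{\mathrm{rest}}(R)$ is strictly smaller than the slack lost in the Maclaurin bound, so that the equality analysis above actually captures all maximisers. Only parts of size $\ge2$ can receive two or more vertices, and every term of $S_{\mathrm{rest}}$ carries a penalty $2^{-\binom{m_i}{2}}\le\tfrac12$ for such a part; bounding $m_i!\binom{b_i}{m_i}p_i^{m_i}\le(b_ip_i)^{m_i}$, summing geometrically in each $m_i$ using $|B_i|\le s-1$ (\ref{sup5}), a union bound over the part that receives $\ge2$ vertices and the crude estimate $d_{K_{s-2}}(R-B_i)\le1$ give a bound of the shape $S_{\mathrm{rest}}(R)\le C_s\sum_{i:\,b_i\ge2}(b_ip_i)^2$ for a constant $C_s$ depending only on $s$ (with the $m_i=2$ terms dominating in the binding regime, so $C_s$ is of order $s!\binom s2$). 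Now by \ref{sup3} there are at most $b-a$ parts of size $\ge2$, and by \ref{sup4} the vertices in those largest parts have the smallest weights in $R$; hence $\sum_{i:\,b_i\ge2}(b_ip_i)^2$ is small — of order $(b-a)/t^2$ when $a$ is close to its maximum, matching the Maclaurin slack $s!\bigl(\binom{a_{\max}}{s}a_{\max}^{-s}-\binom as a^{-s}\bigr)$, and negligible compared to that slack (which is then of order $s!$) when $a$ is far from its maximum. Carrying out this comparison uniformly over all admissible $a$, the hypothesis $t>s^2(s-1)/2=s\binom s2$ (respectively $t>s^2(s-1)/2+s$, the extra $s$ accounting for the slightly weaker spread bound and the nonzero value of $S_{\mathrm{rest}}$ for the even-$t$ extremal construction itself) is exactly what makes the slack strictly dominate $S_{\mathrm{rest}}(R)$ whenever $a$ is not maximal. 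This yields $d_{K_s}(R)<d_{K_s}(K^w_r)$ unless $R=K^w_r$, proving~(1), and $d_{K_s}(R)<s!\binom{t/2-1}{s}(t/2-1)^{-s}\le\pi_s(\C K_t)$ whenever $a<t/2-1$, proving~(2).

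The main obstacle is the estimate in the previous paragraph. One must bound $\sum_{i:\,b_i\ge2}(b_ip_i)^2$ — equivalently the weights carried by the large parts — uniformly over all admissible part-size vectors and weight vectors, keeping in mind that \ref{sup4} only partially pins down the weights when parts have differing sizes; and one must then match this against the Maclaurin slack for every admissible value of $a$, with the two extreme regimes (nearly-extremal $R$, versus $R$ with few large parts) meeting precisely at the stated threshold. Retaining the penalty factors $2^{-\binom{m_i}{2}}$ and the size bound $|B_i|\le s-1$ from \ref{sup5} throughout is what allows the argument to close at the constant $s^2(s-1)/2$ rather than at a weaker one, and splitting the verification into these two regimes (and, for even $t$, handling the weight optimisation within the $a=t/2-1$ family) will be the bookkeeping-heavy part.
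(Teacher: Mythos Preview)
Your spread term $S_{\mathrm{spr}}(R)$ is exactly what the paper calls $d_{K_s}(R_0)$, where $R_0$ is obtained from $R$ by replacing every weight-$\tfrac12$ edge by a weight-$0$ edge. From that point on, however, the paper proceeds very differently and avoids the estimate you identify as the obstacle.

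The paper's device is to also consider $R_1$, obtained from $R$ by replacing every weight-$\tfrac12$ edge by weight $1$, and to observe the one-line inequality
\[
d_{K_s}(R)\le \tfrac12\bigl(d_{K_s}(R_0)+d_{K_s}(R_1)\bigr),
\]
since any $s$-tuple using at least one weight-$\tfrac12$ edge contributes at most $\tfrac12$ in $R$, exactly $0$ in $R_0$, and at most $1$ in $R_1$. Two applications of Maclaurin give $d_{K_s}(R_0)\le f(a)$ and $d_{K_s}(R_1)\le f(b)$ for $f(x)=s!\binom{x}{s}x^{-s}=\prod_{j=0}^{s-1}(1-j/x)$. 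A direct second-derivative computation shows $f$ is concave on $[\binom{s}{2},\infty)$, so Jensen gives $\tfrac12(f(a)+f(b))\le f\bigl(\tfrac{a+b}{2}\bigr)=f(r)=d_{K_s}(K_r^w)$. The hypothesis $t>s^2(s-1)/2$ is used for exactly one thing: together with $b\le (s-1)a$ from \ref{sup5} it forces $a\ge\binom{s}{2}$, placing $a$ in the concavity range. So the constant $s^2(s-1)/2$ has a clean structural origin.

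Your plan instead compares $S_{\mathrm{rest}}$ directly against the monotonicity slack $f(r)-f(a)$. That comparison is never carried out, and a first-order check already shows it does not close on its own: with $a=r-k/2$, $b=r+k/2$ (so $k$ parts of size $2$) and equal vertex weights $1/b$, the leading contribution to $S_{\mathrm{rest}}$ is $\binom{s}{2}k/r^2$ while the monotonicity slack is only $\binom{s}{2}k/(2r^2)$. One would have to bring in the Maclaurin slack $f(a)-S_{\mathrm{spr}}$ as well and track both simultaneously across all admissible $a$ and all admissible weight vectors; the vertex weights are not pinned down by \ref{sup4} when part sizes differ, so this is a genuine optimisation over a continuum. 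Even if this can be made to work, there is no visible mechanism producing the precise threshold $s^2(s-1)/2$ in your framework, whereas it falls out immediately from concavity in the paper's argument.

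For part~(2) the paper again avoids a fresh analysis. Since $t-1=\sum_i(|B_i|+1)$ is odd, some $|B_k|$ is even; deleting $B_k$ and rescaling leaves a weighted graph with odd parameter $t'=t-|B_k|-1>s^2(s-1)/2$, so part~(1) forces it to equal $K_{(t'-1)/2}^w$, i.e.\ every other $B_i$ is a singleton, and then \ref{sup3} gives $|B_k|=2$. Your direct treatment of the even case inherits the same unfinished estimate, now with the extra complication that the extremal configuration itself has $S_{\mathrm{rest}}>0$.
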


\begin{proof}
We first prove (1). Suppose $t>s^2(s-1)/2$ is odd. Let $R_0$ be the weighted graph obtained from $R$ by changing all edges weights of $1/2$ into 0 and let $R_1$ be the weighted graph obtained from $R$ by changing all edge weights of $1/2$ into 1. We claim that
\[d_{K_s}(R)\leq \frac{d_{K_s}(R_0)+d_{K_s}(R_1)}2.\]
Indeed, if vertices $v_1,\ldots,v_s\in V(R)$ induce $m\geq 1$ edges of weight $1/2$ in $R$, then the product of their edge weights is $\left(\frac 12\right)^m\leq\frac 12$ in $R$ and is 1 in $R_1$.

We have $d_{K_s}(K_r^w)=s!\binom rs\frac 1{r^s}$. Write $w(B_i)=\sum_{v\in B_i}w(v)$ for the total weight of vertices in $B_i$. By \cref{maclaurin}, we have
\begin{align*}
d_{K_s}(R_0)&=s!\sum_{1\leq i_1<\cdots<i_s\leq a}w(B_{i_1})\cdots w(B_{i_s})\leq s!\binom as\frac 1{a^s},
\quad\text{and}
\\d_{K_s}(R_1)&=\sum_{\substack{v_1,\ldots,v_s\in V(R)\\\text{distinct}}}w(v_1)\cdots w(v_s)\leq s!\binom bs\frac 1{b^s}.
\end{align*}
Thus, setting $f(x)=s!\binom xs x^{-s}=(1-\frac 1x)\cdots(1-\frac{s-1}x)$, it suffices to show that $\frac{f(a)+f(b)}2\leq f(\frac{a+b}2)=d_{K_s}(K_r^w)$.

First, observe that
\[
f'(x)=\sum_{i=1}^{s-1} \frac i{x^2} \prod_{\substack{1\leq j<s,\\j\neq i}} \left(1-\frac jx\right)
\]
and that
\begin{align*}
f''(x) &=\sum_{i=1}^{s-1}\Bigg(-\frac{2i}{x^3} \prod_{\substack{1\leq j<s,\\j\neq i}} \left(1-\frac jx\right)
    +\sum_{\substack{1\leq j<s,\\j\neq i}} \frac{ij}{x^4}\cdot \prod_{\substack{1\leq k<s,\\k\neq i,j}} \left(1-\frac kx\right)\Bigg)
\\&=\sum_{i=1}^{s-1}
	\Bigg(-\frac{2i}{x^3} +\sum_{\substack{1\leq j<s,\\j\neq i}} \frac{ij}{x^4}\left(\frac{x}{x-j}\right)\Bigg)
	\Bigg(\prod_{\substack{1\leq j<s,\\j\neq i}}\left(1-\frac jx\right)\Bigg)
\\&=\sum_{i=1}^{s-1}\frac {2i}{x^3}
	\Bigg(-1 +\sum_{\substack{1\leq j<s,\\j\neq i}}\frac{j}{2(x-j)}\Bigg)
	\Bigg(\prod_{\substack{1\leq j<s,\\j\neq i}}\left(1-\frac jx\right)\Bigg).
\end{align*}
It follows that $f''(x)<0$ when $x\geq\binom s2$: for $s=3$, this can be checked manually, and for $s\geq 4$, this follows from the inequality
\[
\sum_{j=1}^{s-1}\frac j{2(x-j)}\leq\sum_{j=1}^{s-1}\frac jx=\binom s2\frac 1x\leq 1.
\]
Hence, if $a\geq\binom s2$, Jensen's inequality implies
\[
d_{K_s}(R)\leq\frac{f(a)+f(b)}{2}\leq f\left(\frac{a+b}{2}\right)=d_{K_s}(K_r^w)
\]
with equality if and only if $a=b=r$, i.e., $R=K_r^w$.
To see that $a\geq\binom s2$, note that \ref{sup5} implies $b\leq(s-1)a$, yielding $sa\geq a+b=t-1\geq s\binom s2$ as desired.

We now prove (2) using (1). Suppose $t>s^2(s-1)/2+s$ is even. Because
$t-1=a+b=\sum_{i=1}^a(|B_i|+1)$
is odd, there exists some $k$ such that $|B_k|+1$ is odd.
Scaling $R-B_k$ up by a factor of $(1-w(B_k))^{-1}$ yields a weighted graph $R_0$ admitting a $(b',a')$-partition, where $a'=a-1$ and $b'=b-|B_k|$.

Set $t'=a'+b'+1=t-|B_k|-1$, which is odd, and set $r=(t'-1)/2$. Let $R'$ be the weighted graph obtained from $R$ by replacing $V(R)-B_k$ with a set $B'$ of $r$ vertices of weight $(1-w(B_k))/r$ each, and setting $w(v',v)=1$ for each $v'\in B'$ and $v\in V(R')\setminus\{v\}$. That is, $R'[B']$ is a copy of $K_{r}^w$ scaled down by a factor of $1-w(B_k)$. 
We note that $R'$ is $\C K_t$-free. Indeed, if $(S_1,S_2)$ is a weighted clique in $R'$, then $S_2$ contains at most one vertex from $B_k$, so $|S_2|\leq r+1$. Hence, $|S_1|+|S_2|\leq |V(R')|+r+1=|B_k|+2r+1=t-1$.

Observe that $|B_k|\leq s-1$ by \ref{sup5}, so $t'>s^2(s-1)/2$. By part (1), we have
\[d_{K_m}(R-B_k)=(1-w(B_k))^md_{K_m}(R_0)\leq(1-w(B_k))^md_{K_m}(K_r^w)=d_{K_m}(R'[B'])\]
for all $m\leq s$. Equality holds if and only if $R_0=K_r^w$, i.e., if and only if $|B_i|=1$ for each $i\neq k$. Hence, if $|B_j|>1$ for some $j\neq k$, we have
\[
d_{K_s}(R)=\sum_{m=0}^s\binom smd_{K_m}(R-B_k)d_{K_{s-m}}R[B_k]
<\sum_{m=0}^s\binom sm d_{K_m}(R'[B'])d_{K_{s-m}}(R'[B_k])
=d_{K_s}(R'),
\]
contradicting the assumption that $d_{K_s}(R)=\pi_s(\C K_t)$. To complete the proof of (2), we recall that $|B_k|$ is even and $|B_k|\leq|B_i|+1$ for any $i\neq k$ by \ref{sup3}. Hence, if $|B_i|=1$ for all $i\neq k$ then $|B_k|=2$.	
\end{proof}

\subsection{The Remaining Positive Results}\label{sec:case 2}

We now prove \cref{conj-wg} in the remaining cases described in \cref{main 2}.
We remark that the cases $t=s+2$ and $s=3$ were proven in \cite{balogh2017on}. However, the proofs of these cases are very straightforward when framed in terms of weighted graphs, so we present them for completeness.

\begin{lemma}
Fix $s\geq 3$ and let $t=s+2$. If $R$ is a $\C K_t$-free weighted graph $R$ with $d_{K_s}(R)=\pi_s(\C K_t)$ of minimum cardinality, it admits an $(s,1)$-partition.
\end{lemma}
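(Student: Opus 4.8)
The plan is to observe that this lemma is an essentially immediate consequence of \cref{thm:weight:main}. Since $R$ is a $\C K_t$-free weighted graph achieving $d_{K_s}(R)=\pi_s(\C K_t)$ of minimum cardinality, the final sentence of \cref{thm:weight:main} guarantees that $R$ satisfies properties \ref{sup1}--\ref{sup5}. Let $B_1\cup\cdots\cup B_a$ be the partition from \ref{sup2}, and write $b=|V(R)|=\sum_{i\in[a]}|B_i|$.

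From \ref{sup2} we have $b\geq s$ and $a+b=t-1=s+1$. Since the $B_i$ are nonempty, $a\geq 1$, which forces $b=(s+1)-a\leq s$; combining the two bounds gives $b=s$ and $a=1$. Property \ref{sup5} (or simply the fact that there is a single part of total size $b=s$) then gives $|B_1|=s$. Hence $R$ has exactly $s$ vertices, exactly one part, and satisfies \ref{sup1}--\ref{sup5}, which is precisely the statement that $R$ admits an $(s,1)$-partition. (Concretely, $R$ is the complete weighted graph on $s$ vertices with all vertex weights $1/s$ and all edge weights $1/2$; one can double-check directly that this is $\C K_t$-free, since $R_{>1/2}$ has no edges, so any weighted clique $(S_1,S_2)$ has $|S_2|\leq 1$ and thus $|S_1|+|S_2|\leq s+1<t$.)

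There is no genuine obstacle here: all of the work has already been carried out in the proof of \cref{thm:weight:main}, and once that structural result is in hand the case $t=s+2$ collapses to the arithmetic identity $a+b=s+1$ together with $b\geq s$ and $a\geq 1$. Accordingly, I would present this as a short two-line deduction rather than a substantive argument, noting (as the authors do) that this recovers the $t=s+2$ case of~\cite{balogh2017on} with essentially no effort once the problem is phrased in terms of weighted graphs.
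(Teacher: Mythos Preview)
Your proposal is correct and follows essentially the same approach as the paper: invoke \cref{thm:weight:main} (specifically \ref{sup2}) to obtain $a\geq 1$, $b\geq s$, and $a+b=t-1=s+1$, whence $a=1$ and $b=s$. The paper's proof is precisely this two-line deduction, without the supplementary verification you include.
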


\begin{proof}
From \cref{thm:weight:main}\ref{sup2}, $R$ admits a $(b,a)$ partition for parameters $a\geq 1$ and $b\geq s$ satisfying $a+b=t-1=s+1$. It is immediate that $a=1$ and $b=s$.
\end{proof}

In the next two lemmas, we prove \cref{conj-wg} for $t\geq s+3$ and $s=3,4$.

\begin{lemma}
Set $s=3$ and fix $t\geq s+3$. 
Suppose $R$ is a $\C K_t$-free weighted graph with $d_{K_s}(R)=\pi_s(\C K_t)$ of minimum cardinality. Then $R$ admits a $(b,a)$-partition with $b=\max\{s,\lfloor t/2\rfloor\}$.
\end{lemma}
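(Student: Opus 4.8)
The plan is to invoke \cref{thm:weight:main}: since $R$ has minimum cardinality among extremal $\C K_t$-free weighted graphs, it satisfies \ref{sup1}--\ref{sup5}, hence admits a $(b,a)$-partition $B_1\cup\cdots\cup B_a$ with $a+b=t-1$. As $R$ has no part larger than its vertex set, $b\ge a$, so $b\ge\lceil(t-1)/2\rceil=\lfloor t/2\rfloor$; and since $t\ge s+3=6$ we have $\lfloor t/2\rfloor\ge 3=s$, so the claimed value $\max\{s,\lfloor t/2\rfloor\}$ equals $\lfloor t/2\rfloor\le b$. It remains to rule out $b>\lfloor t/2\rfloor$. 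First I would note that \ref{sup5} forbids $a=1$ here (that would force $b=s=3$ and $t=5$), so $a\ge 2$ and every $|B_i|\le s-1=2$. Writing $k$ for the number of size-$2$ parts, one has $b=a+k$ and $k=2b-(t-1)$, so $b>\lfloor t/2\rfloor$ would force $k\ge 2$. Thus it suffices to show that an extremal $R$ cannot contain two parts of size $2$.

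So suppose $B_1=\{u_1,u_2\}$ and $B_2=\{v_1,v_2\}$ are size-$2$ parts; by \ref{sup2} and \ref{sup4} all four of these vertices share a common weight, and $W:=w(B_1)+w(B_2)\in(0,1]$ (the left endpoint is excluded because a minimum-cardinality extremal graph has no weight-$0$ vertex). The crucial construction is a local surgery: let $R'$ be obtained from $R$ by deleting $B_1\cup B_2$ and inserting three new singleton parts $\{z_1\},\{z_2\},\{z_3\}$ with $w(z_i)=W/3$, assigning weight $1$ to every edge incident to some $z_i$ (including the edges $z_iz_j$), and leaving all other weights unchanged. Then $R'$ is $\C K_t$-free: it has $a+1$ parts and $b-1$ vertices, $R'_{>0}$ is complete and $R'_{>1/2}$ is complete multipartite on the $a+1$ parts, so any weighted clique $(S_1,S_2)$ of $R'$ satisfies $|S_1|+|S_2|\le(b-1)+(a+1)=a+b=t-1<t$.

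To finish I would show $d_{K_3}(R')>d_{K_3}(R)$, contradicting $d_{K_3}(R)=\pi_3(\C K_t)$. Group the $K_3$-density of each graph by how many of the three sampled vertices fall in the ``block'' $B_1\cup B_2$ (resp.\ $\{z_1,z_2,z_3\}$). Since every block vertex is joined with weight $1$ to every vertex outside the block in \emph{both} $R$ and $R'$, and the outside graphs are identical, the contributions of triples meeting the block in at most one vertex agree. A direct count of the two remaining types of contributions (two, resp.\ three, vertices in the block) gives
\[
d_{K_3}(R)-d_{K_3}(R')=3(1-W)W^2\Big(\tfrac58-\tfrac23\Big)+W^3\Big(\tfrac3{16}-\tfrac29\Big)=-\frac{(1-W)W^2}{8}-\frac{5W^3}{144}<0,
\]
using that the common block vertex weight is $W/4$ and that $e_2(\tfrac W3,\tfrac W3,\tfrac W3)=\tfrac{W^2}3$ and $e_3(\tfrac W3,\tfrac W3,\tfrac W3)=\tfrac{W^3}{27}$. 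Hence $R$ has at most one size-$2$ part, so $b=\lfloor t/2\rfloor=\max\{s,\lfloor t/2\rfloor\}$.

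The step I expect to be the main obstacle is this last density comparison: one must be careful that the partition of triples by ``number of vertices in the block'' is exhaustive and that the block-to-outside edge weights genuinely cancel between $R$ and $R'$, and, crucially, the displayed inequality must be strict for \emph{all} $W\in(0,1]$ — this handles the boundary case $a=2$ (where $t=7$, $W=1$, and the outside is empty) uniformly with the generic case rather than requiring a separate argument.
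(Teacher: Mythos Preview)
Your argument is correct but uses a different surgery from the paper. The paper, having reduced to the situation of two disjoint weight-$\tfrac12$ edges $uv$ and $xy$, applies an \emph{edge-weight swap}: assuming without loss of generality that $d_{K_3}(R,\{x,y\})\ge d_{K_3}(R,\{u,v\})$, it sets $w(u,v)=0$ and $w(x,y)=1$. The resulting weighted graph $R'$ is $\C K_t$-free (because $\omega(R'_{>1/2})+\omega(R'_{>0})\le(a+1)+(b-1)<t$) and satisfies $d_{K_3}(R')\ge d_{K_3}(R)$, but now has an edge of weight $0$, contradicting \ref{sup1} for minimum-cardinality extremal graphs. Your approach instead performs a \emph{block contraction}, replacing the four block vertices by three new singleton parts of equal weight and obtaining a strict density gain by direct calculation. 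This is exactly the move the paper uses for $s=4$ (Case~3 in that proof), so your argument is perhaps the more natural one from the standpoint of the paper's overall methodology; the edge-swap is a slicker trick specific to $s=3$ that avoids explicit density computation at the cost of relying on the indirect contradiction through \ref{sup1}.
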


\begin{proof}
By \cref{thm:weight:main}, $R$ admits a $(b,a)$-partition  $B_1\cup\cdots\cup B_a$ such that $a+b=t-1$. Moreover, combining \ref{sup5} with the inequality $t\geq 6$, we conclude $a\geq 2$ and $|B_i|\leq 2$ for each $i$.

To show that $b=\lfloor t/2\rfloor=\max\{s,\lfloor t/2\rfloor\}$ when $t\geq 6$, it suffices to show that all but at most one of the parts $B_i$ have cardinality 1. Equivalently, we must show that $R$ does not contain disjoint edges $uv$ and $xy$ with $w(u,v)=w(x,y)=1/2$.

Suppose the contrary. We may assume without loss of generality that $d_{K_3}(R,\{x,y\})\geq d_{K_3}(R,\{u,v\})$. Let $R'$ be the graph obtained from $R$ by setting $w(u,v)=0$ and $w(x,y)=1$. We observe that $d_{K_3}(R',\{x,y\})=2d_{K_3}(R,\{x,y\})$ and $d_{K_3}(R',\{u,v\})=0$, so
\begin{align*}
d_{K_3}(R')-d_{K_3}(R)
&=d_{K_3}(R',\{x,y\})+d_{K_3}(R',\{u,v\})
	-d_{K_3}(R,\{x,y\})-d_{K_3}(R,\{u,v\})
\\&=d_{K_3}(R,\{x,y\})-d_{K_3}(R,\{u,v\})\geq 0.
\end{align*}
Moreover, $R'$ is $\C K_t$-free, as the clique numbers of $R'_{>\frac 12}$ and $R'_{>0}$ satisfy
\[
\omega(R'_{>\frac 12})+\omega(R'_{>0})\leq\left(\omega(R_{>\frac 12})+1\right)+\left(|V(R)|-1\right)=(a+1)+(b-1)<t.
\]
It follows that $R'$ is also an extremal $\C K_t$-free weighted graph of minimal cardinality. However, this contradicts \cref{thm:weight:main}\ref{sup1}, because $R'$ contains an edge of weight 0. We conclude that $R$ cannot contain disjoint edges $uv$ and $xy$ of weight $1/2$. This implies that all but at most one of the parts $B_i$ have cardinality 1, which is equivalent to showing that $b=\lfloor t/2\rfloor$.
\end{proof}

\begin{lemma}
Set $s=4$ and fix $t\geq s+3$.
Suppose $R$ is a $\C K_t$-free weighted graph with $d_{K_s}(R)=\pi_s(\C K_t)$ of minimum cardinality. Then $R$ admits a $(b,a)$-partition with $b=\max\{s,\lfloor t/2\rfloor\}$.
\end{lemma}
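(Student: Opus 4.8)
The plan is to combine the structural description in \cref{thm:weight:main} with local surgery on $R$. By \cref{thm:weight:main}, $R$ admits a $(b,a)$-partition $V(R)=B_1\cup\cdots\cup B_a$ with $a+b=t-1$; since $t\ge s+3=7>s+1$, property~\ref{sup5} gives $a\ge 2$ and $|B_i|\le s-1=3$ for every $i$, and~\ref{sup3} makes the $|B_i|$ pairwise differ by at most one. Because $b\ge s$ and $a\le b$ always force $b\ge\max\{s,\lfloor t/2\rfloor\}$, the conclusion is equivalent to saying that $|V(R)|$ is as small as possible among extremal graphs; unwinding the arithmetic, this reduces (for $t\ge 8$) to showing that at most one part $B_i$ has $|B_i|\ge 2$ and that such a part has size exactly $2$. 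For $t=7$ the only admissible profile is $(2,2)$, so there is nothing to prove. The workhorse is the merge operation from the proof of~\ref{sup5}: if $u,v$ lie in a common part $B$, replace them by a single new vertex of weight $w(u)+w(v)$ joined to every other vertex by an edge of weight $1$. The resulting $R'$ is $\C{K}_t$-free (one checks $\omega(R'_{>1/2})+\omega(R'_{>0})\le (a+1)+(b-1)=t-1$) and has $|V(R')|=|V(R)|-1$; hence $d_{K_s}(R')\ge d_{K_s}(R)$ would contradict either the extremality of $R$ (if the inequality is strict) or its minimality (if it is an equality).

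\textbf{Step 1 (no part of size $3$).} Suppose $B_1$ has size $3$ and is of maximal size, with common vertex weight $p$; by~\ref{sup4} every vertex of $R$ has weight $\ge p$. Expanding $d_{K_s}(R)=\sum_{m}\binom sm d_{K_m}(R[B_1])\,d_{K_{s-m}}(R-B_1)$ and doing the same for the merged graph $R'$, the $m\le 1$ terms cancel, and since $R[B_1]$ is a weighted triangle and $R'[B']$ a weighted edge, a short computation collapses $d_{K_s}(R')-d_{K_s}(R)$ to a positive multiple of $2\,d_{K_2}(R-B_1)-p\,d_{K_1}(R-B_1)$. Now $d_{K_1}(R-B_1)=1-3p$, while~\ref{sup3} forces every part of $R-B_1$ to have size $2$ or $3$, which gives $d_{K_2}(R-B_1)\ge\tfrac14(1-3p)^2$; since~\ref{sup4} forces $p\le\tfrac{1-3p}2$, this is $\ge\tfrac12 p(1-3p)$, so the displayed quantity is nonnegative and we reach a contradiction. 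Thus every part of $R$ has size at most $2$.

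\textbf{Step 2 (at most one part of size $2$, for $t\ge 8$).} This is the crux. Suppose $B_1=\{u_1,u_2\}$ and $B_2=\{v_1,v_2\}$ both have size $2$; by~\ref{sup4} all four vertices have the same weight $p$, and every edge incident to $B_1\cup B_2$ other than $u_1u_2,v_1v_2$ has weight $1$. In contrast to Step~1, every local move here is density-decreasing: merging within $B_1$ or merging $u_1$ with $v_1$ loses a term of order $p^2$ or $p^4$ coming from the copies of $K_s$ meeting all of $u_1,u_2,v_1,v_2$, and --- because $B_1$ and $B_2$ are genuinely symmetric in $R$ --- any swap turning one of $u_1u_2,v_1v_2$ into weight $0$ and the other into weight $1$ loses $\tfrac14 d_{K_s}(R,\{u_1,u_2,v_1,v_2\})$, so one cannot create the forbidden weight-$0$ edge of~\ref{sup1} for free. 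The plan is therefore to argue globally: compare the maximum $K_s$-density over the profile of $R$ with the maximum over the (unique) admissible profile having one fewer vertex, which here has two fewer size-$2$ parts and three more singletons. Performing the cross-merge lands us in the smaller profile at a cost of $6p^4$, after which one re-optimizes the vertex weights; using that an optimal weighted graph has all vertex weights $\Theta(1/b)$ (so $p^4=O(b^{-4})$) together with a \cref{maclaurin}-type convexity estimate for $d_{K_s}$ as a function of the weights, one shows the rebalancing gain strictly dominates this loss. Hence the larger profile is not extremal of minimum cardinality, a contradiction.

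\textbf{Main obstacle.} Step~2 is where the difficulty lies. The $s=3$ argument only needs to produce a single weight-$0$ edge by a one-edge swap, which immediately contradicts~\ref{sup1}; when $s=4$ and two parts have size $2$ this fails, since symmetry makes every such swap and every vertex-merge strictly density-decreasing. Making the global profile-to-profile comparison rigorous --- quantifying the rebalancing gain and carefully separating the roles of ``extremal'' and ``extremal of minimum cardinality'' in the equality cases --- is the heart of the proof, whereas Step~1 and the already-treated cases $s=3$ and $t=s+2$ are comparatively routine.
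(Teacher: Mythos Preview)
Your Step~1 is correct and in fact tidier than the paper's treatment: the paper splits the size-$3$ analysis into two cases ($|B_2|=3$ and $|B_2|=2$) with different surgeries, whereas your single merge inside $B_1$ handles both at once. The identity
\[
d_{K_4}(R')-d_{K_4}(R)=3p^2\bigl(2\,d_{K_2}(R-B_1)-p\,d_{K_1}(R-B_1)\bigr)
\]
is right, and the bound $d_{K_2}(R-B_1)\ge\tfrac14(1-3p)^2$ does follow from~\ref{sup3} (every part of $R-B_1$ has size $2$ or $3$, so its coefficient $\tfrac{k+1}{2k}\le\tfrac34$ in the decomposition $d_{K_2}(R-B_1)=(1-3p)^2-\sum_i\tfrac{k_i+1}{2k_i}W_i^2$); you should spell this out rather than assert it, since it is not one line. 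The equality case $(a,b)=(2,5)$, $p=q=\tfrac15$ is then disposed of by minimality, as you note.

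Step~2, however, is not a proof. You correctly observe that every local one-edge swap or single-vertex merge loses density when $|B_1|=|B_2|=2$, and you propose to compare profiles globally after re-optimising weights; but the actual comparison --- the ``rebalancing gain strictly dominates the $6p^4$ loss'' --- is never carried out, and there is no indication of what the convexity estimate is or why it gives a strict inequality. This is precisely the step the paper does not sidestep: it performs a concrete two-parts-for-one surgery, replacing the four vertices of $B_1\cup B_2$ by three vertices of weight $4p/3$ joined to everything with weight~$1$. This is still $\C K_t$-free and gains at $m=2,3$ while losing only $d_{K_4}(R[B_1\cup B_2])=6p^4$ at $m=4$. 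The loss is then absorbed not by rebalancing but by bringing in a third part $B_3$ (which exists since $t\ge 8$ forces $a\ge 3$): the $m=3$ gain multiplied by $d_{K_1}(R[B_3])\ge p$ already exceeds the $m=4$ loss. No global re-optimisation is needed. Until you supply an argument of comparable concreteness, Step~2 remains a plan rather than a proof.
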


\begin{proof}
By \cref{thm:weight:main}, $R$ admits a $(b,a)$-partition $B_1\cup\cdots\cup B_a$ such that $a+b=t-1$. Combining \ref{sup5} with the inequality $t\geq 7$, we conclude that $a\geq 2$ and that each part has cardinality at most 3. If $t=7$, then we must have $a=2$ and $b=4$, because \ref{sup2} implies $b\geq 4$. Henceforth, we assume $t\geq 8$ and show that $b=\lfloor t/2\rfloor$. Equivalently, we must show that at most one of the parts $B_1,\ldots,B_a$ has cardinality greater than 1.

Order the parts such that
$3\geq |B_1|\geq|B_2|\geq\cdots\geq|B_a|\geq|B_1|-1$; the last inequality is a consequence of \ref{sup3}. Suppose for the sake of contradiction that $|B_2|\geq 2$. We split our proof into three cases, based on $|B_1|$ and $|B_2|$. In each case, we derive a contradiction by constructing a $\C K_t$-free weighted graph with larger $K_s$-density than $R$; these constructions are given in Figure \ref{fig:s=4}.

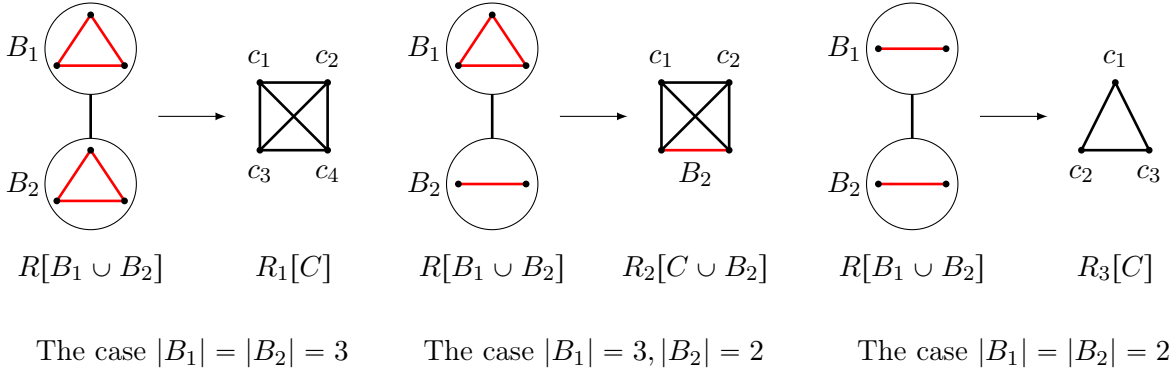
\begin{figure}[h]
\centering    
	\begin{tikzpicture}[scale=0.45]
	\draw[red] [line width=1pt] (3,20.5)-- (5,20.5);
	\draw[red] [line width=1pt] (3,20.5)-- (4,22);
	\draw[red] [line width=1pt] (4,22)-- (5,20.5);
	\fill (3,20.5) circle (0.1cm);
	\fill (5,20.5) circle (0.1cm);
	\fill (4,22) circle (0.1cm);
	\draw (4,21) circle (1.35cm);
	
	\draw[red] [line width=1pt] (3,16.5)-- (5,16.5);
	\draw[red] [line width=1pt] (3,16.5)-- (4,18);
	\draw[red] [line width=1pt] (4,18)-- (5,16.5);
	\fill (3,16.5) circle (0.1cm);
	\fill (5,16.5) circle (0.1cm);
	\fill (4,18) circle (0.1cm);
	\draw (4,17) circle (1.35cm);
	\draw[black] [line width=1pt] (4,19.65)-- (4,18.35);
	\draw (2,21) node[anchor=center] {$B_1$};
	\draw (2,17) node[anchor=center] {$B_2$};
	\draw (4,14.5) node[anchor=center] {$R[B_1\cup B_2]$};
	\draw[-latex] (6,19) -- (8,19); 
	
	\draw[black] [line width=1pt] (9,18)-- (9,20);
	\draw[black] [line width=1pt] (9,18)-- (11,18);
	\draw[black] [line width=1pt] (9,18)-- (11,20);
	\draw[black] [line width=1pt] (9,20)-- (11,18);
	\draw[black] [line width=1pt] (9,20)-- (11,20);
	\draw[black] [line width=1pt] (11,18)-- (11,20);
	\fill (9,18) circle (0.1cm);
	\fill (9,20) circle (0.1cm);
	\fill (11,18) circle (0.1cm);
	\fill (11,20) circle (0.1cm);
	\draw (9,17.3) node[anchor=center] {$c_3$};
	\draw (9,20.7) node[anchor=center] {$c_1$};
	\draw (11,17.3) node[anchor=center] {$c_4$};
	\draw (11,20.7) node[anchor=center] {$c_2$};
	\draw (10,14.5) node[anchor=center] {$R_1[C]$};
	\draw (7,12) node[anchor=center] {The case $|B_1|=|B_2|=3$};
\end{tikzpicture}
\hspace{0.7em}
\begin{tikzpicture}[scale=0.45]
	\draw[red] [line width=1pt] (3,20.5)-- (5,20.5);
	\draw[red] [line width=1pt] (3,20.5)-- (4,22);
	\draw[red] [line width=1pt] (4,22)-- (5,20.5);
	\fill (3,20.5) circle (0.1cm);
	\fill (5,20.5) circle (0.1cm);
	\fill (4,22) circle (0.1cm);
	\draw (4,21) circle (1.35cm);
	\draw[red] [line width=1pt] (3,17)-- (5,17);
	\fill (3,17) circle (0.1cm);
	\fill (5,17) circle (0.1cm);
	\draw (4,17) circle (1.35cm);
	\draw[black] [line width=1pt] (4,19.65)-- (4,18.35);
	\draw (2,21) node[anchor=center] {$B_1$};
	\draw (2,17) node[anchor=center] {$B_2$};
	\draw (4,14.5) node[anchor=center] {$R[B_1\cup B_2]$};
	\draw[-latex] (6,19) -- (8,19); 
	\draw[black] [line width=1pt] (9,18)-- (9,20);
	\draw[red] [line width=1pt] (9,18)-- (11,18);
	\draw[black] [line width=1pt] (9,18)-- (11,20);
	\draw[black] [line width=1pt] (9,20)-- (11,18);
	\draw[black] [line width=1pt] (9,20)-- (11,20);
	\draw[black] [line width=1pt] (11,18)-- (11,20);
	\fill (9,18) circle (0.1cm);
	\fill (9,20) circle (0.1cm);
	\fill (11,18) circle (0.1cm);
	\fill (11,20) circle (0.1cm);
	\draw (10,17.3) node[anchor=center] {$B_2$};
	\draw (9,20.7) node[anchor=center] {$c_1$};
	\draw (11,20.7) node[anchor=center] {$c_2$};
	\draw (10,14.5) node[anchor=center] {$R_2[C\cup B_2]$};
	\draw (7,12) node[anchor=center] {The case $|B_1|=3,|B_2|=2$};
\end{tikzpicture}
\hspace{0.7em}
\begin{tikzpicture}[scale=0.45]
	\draw[red] [line width=1pt] (3,21)-- (5,21);
	\fill (3,21) circle (0.1cm);
	\fill (5,21) circle (0.1cm);
	\draw (4,21) circle (1.35cm);
	\draw[red] [line width=1pt] (3,17)-- (5,17);
	\fill (3,17) circle (0.1cm);
	\fill (5,17) circle (0.1cm);
	\draw (4,17) circle (1.35cm);
	\draw[black] [line width=1pt] (4,19.65)-- (4,18.35);
	\draw (2,21) node[anchor=center] {$B_1$};
	\draw (2,17) node[anchor=center] {$B_2$};
	\draw (4,14.5) node[anchor=center] {$R[B_1\cup B_2]$};
	\draw[-latex] (6,19) -- (8,19); 
	\draw[black] [line width=1pt] (9,18)-- (11,18);
	\draw[black] [line width=1pt] (9,18)-- (10,20);
	\draw[black] [line width=1pt] (10,20)-- (11,18);
	\fill (9,18) circle (0.1cm);
	\fill (10,20) circle (0.1cm);
	\fill (11,18) circle (0.1cm);
	\draw (10,20.7) node[anchor=center] {$c_1$};
	\draw (9,17.3) node[anchor=center] {$c_2$};
	\draw (11,17.3) node[anchor=center] {$c_3$};
	\draw (10,14.5) node[anchor=center] {$R_3[C]$};
	\draw (7,12) node[anchor=center] {The case $|B_1|=|B_2|=2$};
\end{tikzpicture}
\caption{The optimizations in the proof of $s=4$. Red edges have weight $1/2$ and black edges have weight 1.}
\label{fig:s=4}
\end{figure}

Case 1: $|B_1|=|B_2|=3$. In this case, the six vertices in $B_1$ and $B_2$ have the same weight $p$.
Let $R_1$ be the weighted graph obtained from $R$ by replacing $B_1\cup B_2$ with a set $C=\{c_1,c_2,c_3,c_4\}$ of four vertices with weight $3p/2$ each such that $w(c_i,c_j)=w(c_i,v)=1$ for any $i,j\in[4]$ and $v\in B_3\cup\cdots\cup B_a$. We note that  $(R_1)_{>\frac 12}$ and $(R_1)_{>0}$ have clique numbers satisfying
\[
\omega((R_1)_{>\frac 12})+\omega((R_1)_{>0})\leq\left(\omega(R_{>\frac 12})+2\right)+\left(|V(R)|-2\right)=(a+2)+(b-2)<t,
\]
so $R_1$ is $\C K_t$-free. Additionally, one computes that
\begin{align*}
d_{K_0}(R[B_1\cup B_2])&=d_{K_0}(R_1[C])=1,
\qquad\quad d_{K_1}(R[B_1\cup B_2])=d_{K_1}(R_1[C])=6p,
\\d_{K_2}(R[B_1\cup B_2])&=p^2\left(18+12\cdot\frac 12\right)
	<\left(\frac{3p}2\right)^2\cdot 12=d_{K_2}(R_1[C]),
\\d_{K_3}(R[B_1\cup B_2])
&=p^3\left(18\cdot \frac 12+2\cdot\left(\frac 12\right)^3\right)\cdot 3!
	<\left(\frac{3p}2\right)^3\cdot 24=d_{K_3}(R_1[C]),
\\d_{K_4}(R[B_1\cup B_2])&=p^4\left(9\cdot\left(\frac 12\right)^2+6\cdot\left(\frac 12\right)^3\right)\cdot 4!
	<\left(\frac{3p}2\right)^4\cdot 24=d_{K_4}(R_1[C]).
\end{align*}
We conclude that
\[
d_{K_4}(R_1)-d_{K_4}(R)=\sum_{m=0}^4\binom 4m\left(d_{K_m}(R_1[C])-d_{K_m}(R[B_1\cup B_2])\right)d_{K_{4-m}}(R-(B_1\cup B_2))>0,
\]
contradicting the extremality of $R$.

Case 2: $|B_1|=3$ and $|B_2|=2$. Let $p$ and $q$ be the weights of vertices in $B_1$ and $B_2$ respectively; by \ref{sup4}, we have $p\leq q$. Let $R_2$ be the weighted graph obtained from $R$ by replacing $B_1$ with a set $C=\{c_1,c_2\}$ of two vertices with weight $3p/2$ each such that $w(c_1,c_2)=w(c_i,v)=1$ for any $i\in[2]$ and $v\in B_2\cup\cdots\cup B_a$. We note that  $(R_2)_{>\frac 12}$ and $(R_2)_{>0}$ have clique numbers satisfying
\[
\omega((R_2)_{>\frac 12})+\omega((R_2)_{>0})\leq\left(\omega(R_{>\frac 12})+1\right)+\left(|V(R)|-1\right)=(a+1)+(b-1)<t,
\]
so $R_2$ is $\C K_t$-free. One computes that
\begin{align*}
d_{K_0}(R[B_1])&=d_{K_0}(R_2[C])=d_{K_0}(R[B_2])=1,
\\d_{K_1}(R[B_1])&=d_{K_1}(R_2[C])=3p,
&d_{K_1}(R[B_2])&=2q,
\\d_{K_2}(R[B_1])&=p^2\cdot 6\cdot\frac 12
	<\left(\frac {3p}2\right)^2\cdot 2
	=d_{K_2}(R_2[C]),
&d_{K_2}(R[B_2])&=q^2\cdot 2\cdot\frac 12,
\\d_{K_3}(R[B_1])&=p^3\cdot 6\cdot\left(\frac 12\right)^3.
\end{align*}
We now claim that
\[
d_{K_m}(R_2[C\cup B_2])-d_{K_m}(R[B_1\cup B_2])=\sum_{r=0}^m\binom mr(d_{K_r}(R_2[C])-d_{K_r}(R[B_1]))d_{K_{m-r}}(R[B_2])
\]
is positive for $2\leq m\leq 4$. This is immediate for $m=2$. For $m=3,4$, only the $r=3$ term is negative, and one may check that the sum of the $r=2$ and $r=3$ terms is positive via the relations
\[
d_{K_3}(R[B_1])-d_{K_3}(R_2[C])=\frac p2\left(d_{K_2}(R_2[C])-d_{K_2}(R[B_1])\right)
\]
and $d_{K_0}(R[B_2])\leq\frac 2p d_{K_1}(R[B_2])\leq\left(\frac2p\right)^2d_{K_2}(R[B_2])$.
Thus, $d_{K_m}(R_2[C\cup B_2])>d_{K_m}(R[B_1\cup B_2])$ for $2\leq m \leq 4$. It follows that
\[
d_{K_4}(R_2)-d_{K_4}(R)=\sum_{m=0}^4\binom 4m\left(d_{K_m}(R_2[C\cup B_2])-d_{K_m}(R[B_1\cup B_2])\right)d_{K_{4-m}}(R-(B_1\cup B_2))>0,
\]
contradicting the extremality of $R$.

Case 3: $|B_1|=|B_2|=2$. In this case, the four vertices in $B_1$ and $B_2$ have the same weight $p$. Moreover, we have $a\geq 3$ because $t\geq 8$. Let $R_3$ be the weighted graph obtained from $R$ by replacing $B_1\cup B_2$ with a set $C=\{c_1,c_2,c_3\}$ of three vertices with weight $4p/3$ each such that $w(c_i,c_j)=w(c_i,v)=1$ for any $i,j\in[3]$ and $v\in B_3\cup\cdots\cup B_a$. We note that  $(R_3)_{>\frac 12}$ and $(R_3)_{>0}$ have clique numbers satisfying
\[
\omega((R_3)_{>\frac 12})+\omega((R_3)_{>0})\leq\left(\omega(R_{>\frac 12})+1\right)+\left(|V(R)|-1\right)=(a+1)+(b-1)<t,
\]
so $R_3$ is $\C K_t$-free. One computes that
\begin{align*}
d_{K_0}(R[B_1\cup B_2])&=d_{K_0}(R_3[C])=1,
\qquad\qquad d_{K_1}(R[B_1\cup B_2])=d_{K_1}(R_3[C])=4p,
\\d_{K_2}(R[B_1\cup B_2])&=p^2\left(8+4\cdot\frac 12\right)
	<\left(\frac{4p}3\right)^2\cdot 6=d_{K_2}(R_3[C]),
\\d_{K_3}(R[B_1\cup B_2])&=p^3\cdot 24\cdot\frac 12
	<\left(\frac{4p}3\right)^3\cdot 6=d_{K_3}(R_3[C]),
\\d_{K_4}(R[B_1\cup B_2])&=p^4\cdot 24\cdot\left(\frac 12\right)^2.
\end{align*}
We now claim that
\[
d_{K_m}(R_3[C\cup B_3])-d_{K_m}(R[B_1\cup B_2\cup B_3])=\sum_{r=0}^m\binom mr(d_{K_r}(R_3[C])-d_{K_r}(R[B_1\cup B_2]))d_{K_{m-r}}(R[B_3])
\]
is positive for $2\leq m\leq 4$. This is immediate for $m=2,3$. For $m=4$, only the $r=4$ term is negative. By~\ref{sup4}, we have $d_{K_1}(R[B_3])=\sum_{v\in B_3}w(v)\geq p$, so
\begin{align*}
\binom 43(d_{K_3}(R_3[C])-d_{K_3}(R[B_1\cup B_2]))d_{K_1}(R[B_3])
&\geq 4\cdot \left(\frac{10}{27p}d_{K_4}(R[B_1\cup B_2])\right)\cdot p
\\&>\binom 44 d_{K_4}(R[B_1\cup B_2])d_{K_0}(R[B_3]),
\end{align*}
and thus $d_{K_m}(R_3[C\cup B_3])>d_{K_m}(R[B_1\cup B_2\cup B_3])$ for $2\leq m \leq 4$. Therefore, setting $B=B_1\cup B_2\cup B_3$, we have
\[
d_{K_4}(R_3)-d_{K_4}(R)=\sum_{m=0}^4\binom 4m\left(d_{K_m}(R_3[C\cup B_3])-d_{K_m}(R[B])\right)d_{K_{4-m}}(R-B)>0,
\]
contradicting the extremality of $R$.
\end{proof}

\subsection{Counterexamples to Conjecture~\ref{conj}}\label{Counter}\label{sec:case 3}
We conclude this section by presenting some counterexamples to \cref{conj} when $t$ is slightly larger than $2s$. We begin with two counterexamples in the $s=5$ case, then use the same ideas to derive a family of counterexamples for all sufficiently large $s$ and any $t$ with $2s\leq t\leq 2.08s$.

We first observe that Conjecture \ref{conj} is not true for $s=5$ and $t\in \{10,11\}$. 

\begin{figure}[h]
    \centering
\begin{tikzpicture}[scale=0.7]
\draw[red] [line width=1.5pt] (4.5,11)-- (4,10);
\draw [line width=1pt] (4.5,11)-- (6,10);
\draw [line width=1pt] (4.5,11)-- (4.5,9);
\draw [line width=1pt] (4.5,11)-- (5.5,11);
\draw [line width=1pt] (4.5,11)-- (5.5,9);
\draw[red] [line width=1.5pt] (5.5,11)-- (6,10);
\draw [line width=1pt] (5.5,11)-- (5.5,9);
\draw [line width=1pt] (5.5,11)-- (4.5,9);
\draw [line width=1pt] (5.5,11)-- (4,10);
\draw [line width=1pt] (4,10)-- (6,10);
\draw [line width=1pt] (4,10)-- (4.5,9);
\draw [line width=1pt] (4,10)-- (5.5,9);
\draw [line width=1pt] (6,10)-- (4.5,9);
\draw [line width=1pt] (6,10)-- (5.5,9);
\draw[red] [line width=1.5pt] (4.5,9)-- (5.5,9);
\draw [fill=white, line width=1pt] (4,10) circle (0.3cm);
\draw [fill=white, line width=1pt] (6,10) circle (0.3cm);
\draw [fill=white, line width=1pt] (4.5,9) circle (0.3cm);
\draw [fill=white, line width=1pt] (5.5,9) circle (0.3cm);
\draw [fill=white, line width=1pt] (4.5,11) circle (0.3cm);
\draw [fill=white, line width=1pt] (5.5,11) circle (0.3cm);
\draw (7.1,10) node[anchor=center] {$>$};
\draw [line width=1pt] (9,11)-- (8.2,10);
\draw [line width=1pt] (9,11)-- (9.8,10);
\draw [line width=1pt] (9,11)-- (8.5,9);
\draw [line width=1pt] (9,11)-- (9.5,9);
\draw [line width=1pt] (8.2,10)-- (9.8,10);
\draw [line width=1pt] (8.2,10)-- (8.5,9);
\draw [line width=1pt] (8.2,10)-- (9.5,9);
\draw [line width=1pt] (9.8,10)-- (8.5,9);
\draw [line width=1pt] (9.8,10)-- (9.5,9);
\draw[red] [line width=1.5pt] (8.5,9)-- (9.5,9);
\draw [fill=white, line width=1pt] (8.2,10) circle (0.3cm);
\draw [fill=white, line width=1pt] (9.8,10) circle (0.3cm);
\draw [fill=white, line width=1pt] (8.5,9) circle (0.3cm);
\draw [fill=white, line width=1pt] (9.5,9) circle (0.3cm);
\draw [fill=white, line width=1pt] (9,11) circle (0.3cm);
\draw (7.1,7.5) node[anchor=center] {The case $s=5,t=10$};
\end{tikzpicture}
\hspace{3em}
\begin{tikzpicture}[scale=0.7]
\draw[red] [line width=1.5pt] (4.5,11)-- (4,10);
\draw [line width=1pt] (4.5,11)-- (6,10);
\draw [line width=1pt] (4.5,11)-- (4.5,9);
\draw [line width=1pt] (4.5,11)-- (5.5,11);
\draw [line width=1pt] (4.5,11)-- (5.5,9);
\draw[red] [line width=1.5pt] (5.5,11)-- (6,10);
\draw [line width=1pt] (5.5,11)-- (5.5,9);
\draw [line width=1pt] (5.5,11)-- (4.5,9);
\draw [line width=1pt] (5.5,11)-- (4,10);
\draw [line width=1pt] (4,10)-- (6,10);
\draw [line width=1pt] (4,10)-- (4.5,9);
\draw [line width=1pt] (4,10)-- (5.5,9);
\draw [line width=1pt] (6,10)-- (4.5,9);
\draw [line width=1pt] (6,10)-- (5.5,9);
\draw [line width=1pt] (4.5,9)-- (5.5,9);
\draw [fill=white, line width=1pt] (4,10) circle (0.3cm);
\draw [fill=white, line width=1pt] (6,10) circle (0.3cm);
\draw [fill=white, line width=1pt] (4.5,9) circle (0.3cm);
\draw [fill=white, line width=1pt] (5.5,9) circle (0.3cm);
\draw [fill=white, line width=1pt] (4.5,11) circle (0.3cm);
\draw [fill=white, line width=1pt] (5.5,11) circle (0.3cm);
\draw (7.1,10) node[anchor=center] {$>$};
\draw [line width=1pt] (9,11)-- (8.2,10);
\draw [line width=1pt] (9,11)-- (9.8,10);
\draw [line width=1pt] (9,11)-- (8.5,9);
\draw [line width=1pt] (9,11)-- (9.5,9);
\draw [line width=1pt] (8.2,10)-- (9.8,10);
\draw [line width=1pt] (8.2,10)-- (8.5,9);
\draw [line width=1pt] (8.2,10)-- (9.5,9);
\draw [line width=1pt] (9.8,10)-- (8.5,9);
\draw [line width=1pt] (9.8,10)-- (9.5,9);
\draw [line width=1pt] (8.5,9)-- (9.5,9);
\draw [fill=white, line width=1pt] (8.2,10) circle (0.3cm);
\draw [fill=white, line width=1pt] (9.8,10) circle (0.3cm);
\draw [fill=white, line width=1pt] (8.5,9) circle (0.3cm);
\draw [fill=white, line width=1pt] (9.5,9) circle (0.3cm);
\draw [fill=white, line width=1pt] (9,11) circle (0.3cm);
\draw (7.1,7.5) node[anchor=center] {The case $s=5,t=11$};
\end{tikzpicture}
    \caption{Counterexamples to \cref{conj} for $s=5$ and $t\in \{10,11\}$. Red edges have weight $1/2$ and black edges have weight 1.}
    \label{fig:s=5}
\end{figure}
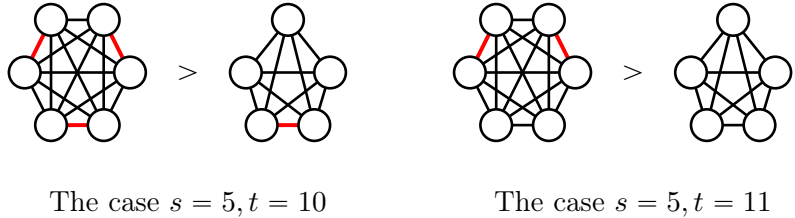

For the case $s=5$ and $t=10$, \cref{conj} hypothesizes that $\pi_s(\C K_t)$ is attained by a weighted graph $R_1$ of order 5 with exactly one edge of weight $1/2$, such that the two vertices incident to the edge of weight $1/2$ have the same weight $p$ and the remaining three vertices have the same weight $q$. Let $R_2$ be a weighted graph of order 6 in which every vertex has weight $1/6$, three disjoint edges have weight $1/2$, and all remaining edges have weight 1. It is straightforward to check that $R_2$ is $\C K_{10}$-free and that
\begin{eqnarray*}
\frac{d_{K_5}(R_2)}{d_{K_5}(R_1)}\ge \frac{\binom 65(\frac{1}{6})^5\frac{1}{4}}{\max\left\{\frac{p^2q^3}{2}: 2p+3q=1\right\}}>1.
\end{eqnarray*}
 Thus Conjecture \ref{conj} is not true in the case $s=5$, $t=10$.
 
 For the case $s=5$ and $t=11$, \cref{conj} hypothesizes that $\pi_s(\C K_t)$ is attained by the complete balanced weighted graph $K_5^w$, which has 5 vertices of weight $1/5$ each and has all edge weights equal to 1. Let $R'$ be a weighted graph of order 6 with two disjoint edges of weight $1/2$ and all other edge weights equal to 1, such that the four vertices incident to the weight-$1/2$ edges have weight $p$ and the remaining two vertices have weight $q$. It is straightforward to check that $R'$ is $\C K_{11}$-free. Moreover, if the parameters $p,q$ are optimized, we have
 \begin{eqnarray*}
\frac{d_{K_s}(R')}{d_{K_s}(K_5^w)}=\frac{\max\left\{\frac{4p^3q^2}{2}+\frac{2p^4q}{4}: 4p+2q=1\right\}}{(\frac{1}{5})^5}>1.
\end{eqnarray*}
Notably, the inequality holds with $p=0.16$ and $q=0.18$. Thus \cref{conj} is also false in the case $s=5$, $t=11$.

We now show that a similar construction works if $s$ is sufficiently large.

\begin{lemma}
\cref{conj} is false for all sufficiently large $s$ and any $t$ satisfying $2s\leq t\leq 2.08s$.
\end{lemma}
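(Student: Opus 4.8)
The plan is to argue via the weighted-graph reformulation \cref{conj-wg}. Since $t\ge 2s$ gives $\max\{s,\lfloor t/2\rfloor\}=\lfloor t/2\rfloor$, it suffices, for each large $s$ and each integer $t$ with $2s\le t\le 2.08s$, to exhibit a $\C K_t$-free weighted graph $R$ admitting a $(b',a')$-partition with $a'+b'=t-1$ whose $K_s$-density exceeds the $K_s$-density of \emph{every} weighted graph admitting a $(\lfloor t/2\rfloor,\lceil t/2\rceil-1)$-partition. For the latter family I use the coarse bound $U:=s!\binom{\lfloor t/2\rfloor}{s}\lfloor t/2\rfloor^{-s}$: such a graph has at most $\lfloor t/2\rfloor$ vertices and all edge weights at most $1$, so raising every edge weight to $1$ and applying Maclaurin's inequality (\cref{maclaurin}) to the vertex weights gives $d_{K_s}\le s!\,e_s(\text{vertex weights})\le U$ (for odd $t$ this is an equality, $U$ being the $K_s$-density of $K^w_{(t-1)/2}$).

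The competitor $R$ has its vertex set partitioned into $k$ parts of size $2$ and $m\in\{0,1,2\}$ parts of size $1$, where $m$ is chosen so that $3k+2m=t-1$ (so $k=\lfloor(t-1-2m)/3\rfloor$); each size-$2$ part carries an internal edge of weight $\tfrac12$, all other edges have weight $1$, and all $b'=2k+m$ vertices have equal weight $1/b'$. As the only part sizes are $2$ and $1$ and all weights are equal, $R$ satisfies \ref{sup1}--\ref{sup5} (using $s\ge 3$ for \ref{sup5}), so it admits a $(b',t-1-b')$-partition; and since $\omega(R_{>1/2})=k+m$ and $\omega(R_{>0})=b'$, every weighted clique of $R$ has order at most $(k+m)+b'=t-1<t$, so $R$ is $\C K_t$-free. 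For $s$ large one checks $s/2\le k<s$, hence $b'\ge s$ and $d_{K_s}(R)>0$.

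For the lower bound, note that $k<s$ forces every $s$-subset of $V(R)$ to contain at least $s-k$ of the size-$2$ parts entirely. The $s$-subsets that avoid all singletons and contain exactly $s-k$ full size-$2$ parts number $\binom{k}{s-k}2^{2k-s}$ (choose the full parts, then one vertex from each of the other $2k-s$ size-$2$ parts), and each contributes $(b')^{-s}2^{-(s-k)}$; hence $d_{K_s}(R)\ge s!\,(b')^{-s}\binom{k}{s-k}2^{3k-2s}$. Writing $\tau=t/s\in[2,2.08]$, so that $k\sim\tfrac\tau3 s$, $b'\sim\tfrac{2\tau}3 s$, $\lfloor t/2\rfloor\sim\tfrac\tau2 s$ and $3k-2s\sim(\tau-2)s$, Stirling's formula reduces $d_{K_s}(R)>U$ to the strict positivity on $[2,2.08]$ of
\begin{align*}
\Phi(\tau)={}&\log\tfrac34+(\tau-2)\log2+\tfrac\tau3\log\tfrac\tau3-\bigl(1-\tfrac\tau3\bigr)\log\bigl(1-\tfrac\tau3\bigr)\\
&-\bigl(\tfrac{2\tau}3-1\bigr)\log\bigl(\tfrac{2\tau}3-1\bigr)-\tfrac\tau2\log\tfrac\tau2+\bigl(\tfrac\tau2-1\bigr)\log\bigl(\tfrac\tau2-1\bigr),
\end{align*}
the term $\log\tfrac34$ coming from the vertex-weight ratio $\lfloor t/2\rfloor/b'\to\tfrac34$ and $(\tau-2)\log2$ from the factor $2^{3k-2s}$. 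Now $\Phi(2)=\log\tfrac34+\tfrac23\log2=\log3-\tfrac43\log2>0$ (equivalently $3^3>2^4$), and $\Phi$ is decreasing on $[2,2.08]$ (a routine calculus check), so $\Phi\ge\Phi(2.08)>0$ throughout. Consequently $d_{K_s}(R)/U\ge e^{\Phi(\tau)s}\,s^{-O(1)}>1$ for all sufficiently large $s$, uniformly over integers $t\in[2s,2.08s]$.

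The main obstacle is this last comparison: one must (i) verify that the one-variable rate function $\Phi$ remains positive on the whole interval $[2,2.08]$ — the endpoint values $\Phi(2)\approx0.174$ and $\Phi(2.08)\approx0.074$ together with monotonicity of $\Phi$ suffice — and (ii) control the lower-order factors (the polynomial Stirling errors of the two binomials, the floors $\lfloor(t-1-2m)/3\rfloor$ and $\lfloor t/2\rfloor$, and the contribution of the $m\le2$ singleton parts) so that they are absorbed into the $s^{-O(1)}$ term and swamped by the exponential gap $e^{\Phi(\tau)s}$. As a side remark, one could also optimize over the number of size-$2$ parts or replace them by larger parts; a short computation shows that in this range of $t$ the choice $k=\lfloor(t-1-2m)/3\rfloor$ with parts of size $2$ is essentially optimal, so nothing is lost by fixing it.
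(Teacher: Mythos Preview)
Your proof is correct and takes a genuinely different route from the paper's. Both arguments upper-bound the conjectured extremal by $U=s!\binom{\lfloor t/2\rfloor}{s}\lfloor t/2\rfloor^{-s}$ via Maclaurin, but the competitor constructions differ. The paper perturbs the conjectured extremal minimally: it adds a single vertex (so $b=\lfloor t/2\rfloor+1$), inserts two (for odd $t$) or three (for even $t$) disjoint weight-$\tfrac12$ edges, and compares $K_s$-densities by an elementary product bound, obtaining explicit constants such as $\tfrac{r+1}{r+1-s}\cdot\tfrac{3^4}{4^5}$ and $\tfrac{r+1}{r+1-s}\cdot\tfrac{5^6}{6^6\cdot 8}$ that exceed $1$ for $r\le 1.04s$. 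You instead go to the opposite end of the $(b,a)$-spectrum, taking almost every part of size~$2$ so that $b'\sim\tfrac{2}{3}(t-1)$; your lower bound comes from one family of $s$-subsets with exactly $s-k$ full pairs, and the comparison reduces to positivity of the rate function $\Phi$ on $[2,2.08]$.

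What each approach buys: the paper's argument is entirely elementary, needs no Stirling analysis, treats the two parities separately with tailored constructions, and immediately specializes to the small cases $s=5$, $t\in\{10,11\}$ (your specific lower bound gives $0$ at $s=5$, $t=11$ since then $k=2<s-k=3$, though this is irrelevant for the asymptotic statement). Your argument is more systematic---a single construction and a single function $\Phi$ cover both parities---and points to what is essentially the optimal competitor in this range rather than a perturbation; it could in principle be pushed to a wider range of $t/s$ by locating the zero of $\Phi$. The cost is the bookkeeping in your point~(ii), which is genuinely routine: the Stirling and floor corrections contribute $O(\log s)$ to the exponent, uniformly on $[2,2.08]$, and are dominated by $\Phi(\tau)s\ge\Phi(2.08)\,s\approx 0.074\,s$. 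Your monotonicity claim is also easily justified: on $(2,2.08]$ the term $\tfrac12\log(\tfrac{\tau}{2}-1)\le\tfrac12\log(0.04)\approx -1.61$ in $\Phi'$ already outweighs the sum of the other (bounded) terms.
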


\begin{proof}
First suppose $t$ is odd, i.e., $t=2r+1$ for some integer $r\geq s$. \cref{conj} hypothesizes that $\pi_s(\C K_t)$ is attained by the complete balanced weighted graph $K_r^w$, which has $r$ vertices of weight $1/r$ each and has all edge weights equal to 1. Let $R'$ be a weighted graph on $(r+1)$ vertices with two disjoint edges of weight $1/2$ and all other edge weights equal to 1, such that the four vertices incident to weight-$1/2$ edges have weight $3/4r$ and the remaining $r-3$ vertices have weight $1/r$. It is straightforward to check that $R'$ is $\C K_t$-free. Moreover, because $s\leq r\leq 1.04s$, we have
\begin{eqnarray*}
\frac{d_{K_s}(R')}{d_{K_s}(K_r^w)}\ge\frac{\binom{r+1}{s}\cdot (\frac{1}{r})^{s-4}(\frac{3}{4r})^4 \frac{1}{4}}{\binom{r}{s}(\frac{1}{r})^s}
= \frac{r+1}{r+1-s}\cdot \frac{3^4}{4^5}
\geq\frac{r+1}{0.04r+1}\cdot 0.079
>1
\end{eqnarray*}
if $s\leq r$ is sufficiently large.

Next, suppose $t$ is even, i.e., $t=2r$ for some integer $r\geq s$. \cref{conj} hypothesizes that $\pi_s(\C K_t)$ is attained by a weighted graph $R_1$ on $r$ vertices with exactly one edge of weight $1/2$. By \cref{maclaurin}, we have that 
\[
d_{K_s}(R_1)\leq \sum_{\substack{v_1,\ldots,v_s\in V(R)\\\text{distinct}}}w(v_1)\cdots w(v_s)
\leq s!\binom rs\left(\frac 1r\right)^s=d_{K_s}(K_r^w).
\]
Let $R_2$ be a weighted graph on $(r+1)$ vertices with three disjoint edges of weight $1/2$ and all other edge weights equal to 1, such that the six vertices incident to weight-1/2 edges have weight $5/6r$ and the remaining $r-5$ vertices have weight $1/r$. It is straightforward to check that $R_2$ is $\C K_t$-free. Moreover, because $s\leq r\leq 1.04s$, we have
\begin{eqnarray*}
\frac{d_{K_s}(R_2)}{d_{K_s}(R_1)}\ge \frac{d_{K_s}(R_2)}{d_{K_s}(K_r^w)} \ge \frac{\binom{r+1}{s}\cdot (\frac{1}{r})^{s-6}(\frac{5}{6r})^6 \frac{1}{8}}{\binom{r}{s}(\frac{1}{r})^s}
= \frac{r+1}{r+1-s}\cdot \frac{5^6}{6^6}\cdot \frac{1}{8}
\geq \frac{r+1}{0.04r+1}\cdot 0.041>1
\end{eqnarray*}
if $s\leq r$ is sufficiently large.
\end{proof}

\section{Concluding Remarks}\label{sec:concluding remarks}
In this paper, we combinatorially resolve the generalized Ramsey--Tur\'an problem for cliques, reducing its determination to a bounded optimization problem about finding the optimal $(b,a)$-partition, which remains an intriguing problem.

\begin{problem}
    Given integers $t-2\geq s\ge 3$, which $(b,a)$-partition with $a+b=t-1$ achieves the Ramsey--Tur\'an density $\varrho_s(K_t)$?
\end{problem}

An easier, yet still interesting, problem is the following. By~\cref{main 2}, the threhold value of $t$ for the extremal periodic behavior lies somewhere between $2.08s$ and $s^3$. Which bound is closer to the truth?

For general graphs, an Erd\H{o}s--Stone--Simonovits type result is still out of reach. For example, we do not know whether $\varrho_2(K_{2,2,2})>0$~\cite{erdo1983more, simonovits2001ramsey}. In light of this, we wonder the following.

\begin{problem}
    Decide if $\varrho_3(K_{2,2,2,2})>0$ or not.
\end{problem}

Another natural future direction is to study $\mathrm{RT}(n,K_s,K_t,f(n))$ for smaller independence numbers, e.g.\ when $f(n)=n^{1-\varepsilon}$ or when it is the inverse function of the Ramsey function, say $f(n)=\sqrt{n\log n}$.
\\

\noindent\textbf{Note.} After this paper was written, we learned that Balogh, Magnan and Palmer \cite{Balogh2024Generalized} independently proved some related results.

\bibliography{reference.bib}

\end{document}